\documentclass[11pt]{article}
\usepackage{amsmath}
\usepackage{amssymb}
\usepackage{amsthm}
\usepackage{enumerate}
\usepackage{graphicx,color}
\usepackage{xspace}

\topmargin 0in
\oddsidemargin .01in
\textwidth 6.5in
\textheight 9in
\evensidemargin 1in
\addtolength{\voffset}{-.6in}
\addtolength{\textheight}{0.22in}
\parskip \medskipamount
\parindent 0pt

\newtheorem{theorem}{Theorem}[section]
\newtheorem{lemma}[theorem]{Lemma}
\newtheorem{corollary}[theorem]{Corollary}

\newtheorem{proposition}[theorem]{Proposition}
\newtheorem{remark}[theorem]{Remark}
\newtheorem{definition}[theorem]{Definition}

\def\calC{{\mathcal C}}
\def\calL{{\mathcal L}}

\newcommand{\pr}[1]{\operatorname{\mathbf{P}}\left(#1\right)}

\newcommand{\E}[1]{\operatorname{\mathbf{E}}\left[#1\right]}

\newcommand{\Econd}[2]{\operatorname{\mathbf{E}}\left[#1\;\middle\vert\;#2\right]}

\newcommand{\critical}{\mathrm{c}}

\newcommand{\tv}[2]{\|#1-#2\|_\mathrm{TV}}
\newcommand{\tmix}{T_\mathrm{mix}}

\newcommand{\fresh}{\star}
\newcommand{\regeneration}{\Omega_\mathrm{REG}}

\newcounter{notecounter}

\newcommand{\IGNORE}[1]{}


\newcommand{\ndtorus}{\mathbb{T}^{d,n}}
\newcommand{\bfZ}{\mathbb{Z}^d}
\newcommand{\dntorus}{\mathbb{T}^{d,n}}
\newcommand{\twontorus}{\mathbb{T}^{2,n}}
\newcommand{\onentorus}{\mathbb{T}^{1,n}}
\newcommand{\dist}{\mathrm{dist}}

\newcommand{\tmixRW}{T^{\mathrm{RW}}_{\mathrm{mix}}}
\newcommand{\Pruu}[1]{\operatorname{\mathbf{P}}\left[#1\right]}
\newcommand{\bfP}{\mathbf{P}}

\newtheorem{remarks}[theorem]{Remarks}
\def\calN{{\mathcal N}}
\begin{document}

\title{Random walks on dynamical percolation: mixing times, mean
squared displacement and hitting times}

\author{Yuval Peres\thanks{Microsoft Research, Redmond WA, U.S.A.\ \ Email:
        \hbox{peres@microsoft.com}} \and
        Alexandre Stauffer\thanks{University of Bath, Bath, U.K.\ \ Email:
        \hbox{a.stauffer@bath.ac.uk}}\and
        Jeffrey E. Steif\thanks{Chalmers University of Technology
and Gothenburg University, Gothenburg, Sweden\ \ Email:
        \hbox{steif@chalmers.se}}
}

\maketitle
\thispagestyle{empty}

\begin{abstract}
We study the behavior of random walk on dynamical percolation. In this model,
the edges of a graph $G$ are either open or closed and
refresh their status at rate $\mu$ while at the same time a random walker moves on $G$ at rate 1 
but  only along edges which are open.
On the $d$-dimensional torus with side length $n$, we prove that in the subcritical regime, 
the mixing times for both the full system and the random walker are
$n^2/\mu$ up to constants. We also obtain results concerning mean squared 
displacement and hitting times. Finally, we show that the usual recurrence transience dichotomy
for the lattice $\mathbb{Z}^d$ holds for this model as well.

 \medskip\noindent
 \emph{Keywords and phrases.} Percolation, dynamical percolation, random walk, mixing times.
 \newline
 MSC 2010 \emph{subject classifications.}
 Primary 60K35, 
    60K37, 

  \medskip\noindent

\end{abstract}

\section{Introduction} \label{sec:Introduction}

Random walks on finite graphs and networks have been studied for quite some 
time; see \cite{AldousFill}. Here we study random walks on certain randomly evolving 
graphs. The simplest such evolving graph is given by dynamical percolation. Here one 
has a graph $G=(V,E)$ and parameters $p$ and $\mu$ and one
lets each edge evolve independently where an edge in state 0 (absent, closed) switches to 
state 1 (present, open) at rate $p\mu$ and an edge in state 1 switches to 
state 0 at rate $(1-p)\mu$. We assume $\mu\le 1$. Let $\{\eta_t\}_{t\ge 0}$ denote the 
resulting Markov process on $\{0,1\}^E$ whose stationary distribution is product measure
$\pi_p$. We next perform a  random walk on the evolving graph $\{\eta_t\}_{t\ge 0}$
by having the random walker at rate 1 choose a neighbor (in the original graph) uniformly
at random and move there if (and only if) the connecting edge is open at that time. Letting
$\{X_t\}_{t\ge 0}$ denote the position of the walker at time $t$, we have that
\[
\{M_t\}_{t\ge 0}:= \{(X_t,\eta_t)\}_{t\ge 0}
\]
is a Markov process while $\{X_t\}_{t\ge 0}$ of course is not. One motivation for the model is
that in real networks, the structure of the network itself can evolve over time; however the time
scale at which this might occur is much longer than the time scale for the walker itself. 
This would correspond to the case $\mu \ll 1$ which is indeed the interesting regime for our results.

Our first result shows that the usual
recurrence/transience criterion for ordinary random walk holds for this model as well.

\begin{theorem}\label{thm:rec}
(i). If $G=\mathbb{Z}^d$ with $d$ being 1 or 2, then 
for any $p\in [0,1]$, $\mu>0$ and initial bond configuration $\eta_0$, 
we have that, for any $s_0\geq 0$, $\pr{\bigcup_{s\geq s_0} \{X_s= 0\}}=1$. \\
(ii). If $G=\mathbb{Z}^d$ with $d\ge 3$, then
for any $p\in(0,1]$, $\mu>0$ and initial bond configuration $\eta_0$, we have that
\[
\lim_{t\to\infty} X_t=\infty \,\,\, { \rm a.s.}
\]
\end{theorem}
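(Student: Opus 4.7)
The plan is to reduce both parts simultaneously to the classical recurrence/transience dichotomy for a $\mathbb{Z}^d$-valued random walk with i.i.d.\ mean-zero, finite-variance increments (Chung--Fuchs). To this end, I will construct a regeneration sequence of stopping times $0 = \tau_0 < \tau_1 < \tau_2 < \cdots$ such that the displacements $\xi_k := X_{\tau_k} - X_{\tau_{k-1}}$ are i.i.d., square integrable, and invariant under the axis reflections of $\mathbb{Z}^d$ (in particular symmetric with mean $0$ and with non-degenerate covariance), and $\E{\tau_k - \tau_{k-1}} < \infty$. Setting $Y_n := X_{\tau_n}$ then produces a classical random walk on $\mathbb{Z}^d$, and Chung--Fuchs yields a.s.\ recurrence of $Y_n$ when $d \le 2$ and a.s.\ transience when $d \ge 3$.

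To build the $\tau_k$, I would imitate the Sznitman--Zerner regeneration familiar from random walks in random environment. Fix a small radius $r \ge 1$ and a designated ``witness pattern'' of edge states on the $r$-ball around the walker. Declare a candidate event at time $t > \tau_{k-1}$ if (a) every edge within distance $r$ of $X_t$ has been refreshed at least once since $\tau_{k-1}$, (b) the current configuration on that ball matches the witness pattern, and (c) the walker has just completed a step using fresh attempted-step randomness generated after $\tau_{k-1}$. For $p \in (0,1]$ and $\mu > 0$, the joint event has positive probability inside any bounded time window, yielding integrable inter-regeneration gaps with exponential tails. Condition (a) renders the local environment at $\tau_k$ a fresh draw from $\pi_p$ independent of the history before $\tau_{k-1}$; (b) resets the local initial state identically across blocks; and (c) makes the displacement lattice-symmetric by re-randomization.

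For part (i), fix any $s_0 \ge 0$: since $\tau_n \to \infty$ almost surely and $Y_n = 0$ infinitely often, one can select $n$ with $\tau_n \ge s_0$ and $Y_n = 0$, giving $X_{\tau_n} = 0$. For part (ii), I would combine $|Y_n| \to \infty$ with a within-block fluctuation bound: $\max_{\tau_{k-1} \le t \le \tau_k}|X_t - X_{\tau_{k-1}}|$ is bounded by the number of attempted moves in the $k$-th block, which together with the exponential tails of $\tau_k - \tau_{k-1}$ yields a maximum of order $\log k$ along the first $n$ blocks, while a Borel--Cantelli argument using the local central limit theorem shows $|Y_n| > \log n$ eventually, a.s. Hence $|X_t| \to \infty$.

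The main obstacle is constructing regeneration increments that are truly i.i.d. The walker and environment are entangled --- an edge that was just used is known to have been open at the moment of use --- so exact independence demands a witness built only from refresh events and attempted-step randomness generated after $\tau_{k-1}$, rather than from pre-existing edge states. Verifying that this witness has positive rate (securing integrable gaps) and that the resulting displacement is genuinely lattice-symmetric (so that $\E{\xi_k} = 0$ and $\mathrm{Cov}(\xi_k)$ is non-degenerate) is the technical heart of the argument.
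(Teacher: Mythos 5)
Your overall strategy --- regenerate, obtain i.i.d.\ symmetric square-integrable increments, invoke the classical recurrence/transience dichotomy for $\{X_{\tau_n}\}$, and then control within-block fluctuations for $d\ge 3$ via exponential tails plus a local CLT and Borel--Cantelli --- is exactly the paper's strategy. But your regeneration construction has a genuine gap: resetting only the edges within distance $r$ of the walker does not decouple consecutive blocks. The walker's history reveals the state of \emph{every} edge it has attempted to cross, and any such edge that has not refreshed since the last attempt still carries that information, no matter how far it lies from $X_{\tau_k}$. In the next block the walker can return to such an edge (this happens with positive probability, e.g.\ in $d=1$), and conditionally on the past that edge is open with probability different from $p$; hence the displacements $\xi_k$ are not independent, and no choice of finite $r$ repairs this. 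The correct regeneration condition is global, not local: the paper tracks the set $A_t$ of edges that the walker has attempted to cross and that have not refreshed since the last attempt, and regenerates only at (integer) times when $A_t=\emptyset$. Showing that this global event recurs with exponentially tight gaps is the technical heart of the paper's argument --- it comes from the drift estimate $\Econd{|A_k|}{\mathcal{F}_{k-1}}\le |A_{k-1}|e^{-\mu}+1$ fed into Proposition \ref{prop:GoodProcessesDie} together with Lemma \ref{lem:NacuPeres05} --- and your ``positive probability inside a bounded time window'' argument for a local witness does not substitute for it.

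A second gap: the theorem is stated for an \emph{arbitrary} initial configuration $\eta_0$, and your construction never resets the environment away from the walker, so for non-stationary $\eta_0$ the increments are neither identically distributed nor symmetric. The paper first proves the result for $\eta_0\sim\pi_p$ (where all never-touched edges are exchangeable, making the symmetry and i.i.d.\ claims legitimate) and then treats general $\eta_0$ by a separate coupling: with probability at least $1-\epsilon$ the walker never encounters an un-refreshed edge outside a fixed finite ball, so one may couple with a configuration agreeing with $\eta_0$ on that ball and distributed as $\pi_p$ elsewhere. You need some version of this step. Two minor points: your witness condition (a), ``refreshed since $\tau_{k-1}$,'' should read ``refreshed since the walker's last attempted crossing,'' since a crossing attempt after a refresh re-reveals the edge; and once the regeneration is repaired, your $d\ge 3$ argument is essentially the paper's, which uses the threshold $k^{1/(4d)}$ where you use $\log k$ (either works).
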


We note that when $G$ is finite and has constant degree, one can check that
$u\times \pi_p$ is the unique stationary distribution and that the process is reversible;
$u$ here is the uniform distribution.

Next, out main theorem gives the mixing time up to constants for 
$\{M_t\}_{t\ge 0}$ and $\{X_t\}_{t\ge 0}$ on the $d$-dimensional discrete torus
with side length $n$, denoted by $\mathbb{T}^{d,n}$, in the subcritical regime for percolation,
where importantly $\mu$ may depend on $n$.

Let $\tv{m_1}{m_2}$ denote the total variation distance between two probability measures
$m_1$ and $m_2$, $\tmix$ denote the mixing time for a Markov chain and let $p_c(d)$ 
denote the critical value for percolation on $\mathbb{Z}^d$.
See Section \ref{sec:Background} for definitions of all these terms.
Next, starting the walker at the origin and taking the initial bond configuration 
to be distributed according to  $\pi_p$, let
\[
\tmixRW(\epsilon) := \inf\{t\geq 0 \colon\tv{\calL(X_t)}{u} \leq \epsilon\}.
\]
(The superscript {\rm RW} refers to the fact that we are only considering the walker here
rather than the full system.) Below $p_\critical(\mathbb{Z}^{d})$ denotes the critical
value for bond percolation on $\mathbb{Z}^{d}$ and $\theta_d(p)$ denotes the probability that the origin
is in an infinite component when the parameter is $p$; see Section \ref{sec:Background}.

\begin{theorem}\label{thm:sub} 
(i). For any $d\ge 1$ and $p\in (0,p_\critical(\mathbb{Z}^{d}))$, there exists 
$C_1< \infty$ such that, for all $n$ and for all $\mu$, 
considering the full system $\{M_t\}_{t\ge 0}$ on $\mathbb{T}^{d,n}$, we have
\[
\tmix \le \frac{C_1n^2}{\mu}.
\]
(ii).  For any $d\ge 1$, $p\in (0,p_\critical(\mathbb{Z}^{d}))$ and $\epsilon<1$,
there exist $C_2>0$ and $n_0>0$ such that, for all $n\ge n_0$ and for all $\mu$,
considering the system on $\mathbb{T}^{d,n}$, we have
\[
\tmixRW(\epsilon) \ge \frac{C_2n^2}{\mu}.
\]
\end{theorem}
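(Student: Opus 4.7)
The plan is to prove the two parts separately: the lower bound (ii) reduces to a mean-squared-displacement estimate, while the upper bound (i) requires a coupling argument that exploits fast decorrelation of the environment.

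For part (ii), I would prove an estimate of the form
\[
\E{\|X_t - X_0\|^2} \le C \mu t + K \quad \text{for all } t \ge 0,
\]
where $\|\cdot\|$ denotes graph distance on $\mathbb{T}^{d,n}$. Let $\calC_t$ be the open cluster containing the walker at time $t$. The walker is confined to $\calC_t$ until an edge on the boundary $\partial \calC_t$ flips. In the subcritical regime, exponential tails for $|\calC|$ and $|\partial \calC|$ under $\pi_p$ give $\E{|\calC_t|}$ and $\E{|\partial \calC_t|}$ of order $O(1)$. Hence edges adjacent to the walker's cluster refresh at rate $O(\mu)$, and each such event displaces the walker by at most $\mathrm{diam}(\calC)$, which has $O(1)$ second moment. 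A Wald/martingale argument applied to the decomposition of $X$ into excursions inside a fixed cluster yields the displayed bound. Markov's inequality then gives $\pr{\|X_t\| \ge n/4} \le 16(C\mu t + K)/n^2$; for $t \le C_2 n^2/\mu$ with $C_2$ small, the walker is concentrated near the origin, while under $u$ it is far from the origin with high probability, so $\tv{\calL(X_t)}{u} > \epsilon$.

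For part (i), I would use a two-step coupling of the full process $M_t = (X_t,\eta_t)$. Couple the environments by the standard Bernoulli refresh coupling (one Poisson clock per edge of rate $\mu$, with both copies receiving the same Bernoulli$(p)$ outcome at each ring), which yields $\pr{\eta^{(1)}_t \ne \eta^{(2)}_t} \le n^d e^{-\mu t}$ -- negligible once $t \gg \log(n)/\mu$. Once the environments agree, it remains to couple $X^{(1)}$ and $X^{(2)}$ inside a common dynamical environment. The heuristic is that the walker is a continuous-time random walk with effective step rate $\mu$ and step variance $O(1)$: between refreshes of edges bordering its cluster, the walker merely mixes inside a finite cluster, and long-range motion comes from refresh events. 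I would make this precise by constructing regeneration times $\tau_1 < \tau_2 < \cdots$ at which the environment surrounding the walker has been totally refreshed since $\tau_{k-1}$; these occur at density $\sim \mu$, the increments $X_{\tau_{k+1}} - X_{\tau_k}$ are essentially i.i.d.\ with bounded variance, and a standard coordinate-by-coordinate coupling of random walks on $\mathbb{T}^{d,n}$ applied to the skeleton $(X_{\tau_k})_k$ yields coalescence after $O(n^2)$ regenerations, i.e.\ after time $O(n^2/\mu)$.

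The principal obstacle is the construction and analysis of the regeneration structure needed in part (i). One must produce stopping times at which the past environment near the walker is forgotten, verify that they occur with density of order $\mu$ uniformly in the history of $M$, and establish that consecutive increments are sufficiently independent with a uniform second-moment bound. Exponential decay of cluster sizes in the subcritical regime is what makes such a construction feasible, but controlling correlations between regeneration epochs -- so that the skeleton $(X_{\tau_k})_k$ can be compared cleanly to a genuine random walk on the torus -- is the delicate step. Once this is in place, the upper bound on $\tmix$ for the full system follows from the coupled walker analysis together with the exponentially fast mixing of the environment.
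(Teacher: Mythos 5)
Your architecture matches the paper's --- a mean squared displacement bound for the lower bound, a regeneration/skeleton coupling for the upper bound --- but in both parts the step you treat as routine is the actual crux, and as written neither goes through.

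For (ii), the estimate $\E{\dist(X_t,X_0)^2}\le C\mu t+K$ is indeed what is needed (it is Theorem \ref{thm:subMeanSquaredDisplacement}), and your cluster-confinement reasoning correctly gives the \emph{one-block} bound over an interval of length $\beta/\mu$ (this is Lemma \ref{lem:sub.1StepBound}). But ``a Wald/martingale argument'' does not get you from one block to $t/\beta$ blocks. The position process is not a martingale: conditionally on the configuration the expected increment is nonzero whenever the open edges around the walker are asymmetric, and the block displacements are neither independent nor conditionally centered, so a priori the cross terms could make $\E{\dist(X_{t/\mu},X_0)^2}$ grow quadratically in the number of blocks. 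The paper's resolution is Lemma \ref{lem:Ball}, Ball's Markov type~$2$ inequality for \emph{stationary reversible} chains, applied to the uniformly bi-Lipschitz embedding $g_n$ of $\ndtorus$ into $\mathbb{R}^{2d}$; reversibility of the full chain $\{M_t\}$ substitutes for the independence you would otherwise need. A true regeneration structure making block increments i.i.d.\ on the scale $1/\mu$ uniformly in $n$ and $\mu$ would also work, but that is precisely the heavy machinery of Section \ref{sec:UpperBoundSub}, which your part (ii) does not construct.

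For (i), the opening move --- couple the environments by shared refresh clocks, then couple the walkers ``inside a common dynamical environment'' --- is flawed. Once the environments agree, the two walkers occupy different clusters of the same configuration; any coupling that mirrors their increments (as a coordinate reflection coupling must) requires them to attempt \emph{different} edges whose states in the shared environment generally differ, while letting them evolve independently meets only after order $n^d$ steps when $d\ge 3$. The paper never identifies the environments edge-by-edge: it enlarges the state space with $\fresh$-marks, waits until both copies hit $\regeneration$, at which point the configurations agree in law up to a translation (Lemma \ref{lem:GoodDistributionsPreserved}, Remark \ref{rem:DescripGoodMeasures}), and only then couples the excursion skeleton. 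The regeneration analysis you rightly call the principal obstacle is the bulk of the proof and is missing: the contraction of the set of non-fresh edges on scale $1/\mu$ (Proposition \ref{prop:DecreaseOfAs}, resting on the iterated subcritical growth bounds of Theorems \ref{thm:Literations} and \ref{thm:LiterationsVolume}), the uniform exponential moment for excursion lengths (Theorem \ref{thm:RenewalsHaveMeanOne} via Proposition \ref{prop:GoodProcessesDie}), and the splitting (\ref{eq:SRWcomponent}) of the excursion increment law into a lazy simple random walk component of mass $\gamma(d,p)$ plus a remainder --- a uniform second moment for the skeleton increments is not by itself enough to run a ``standard coordinate-by-coordinate coupling.''
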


\begin{remarks}{\rm
1. (ii) implies that the upper bound in (i) is also
a lower bound up to constants.\\
2. (i) implies, using (\ref{eq:MixingIterating}) in Section \ref{sec:Background},
that the lower bound in (ii) is also an upper bound up to \\
($\epsilon$ dependent) constants.\\
3. The theorem shows that the ``mixing time'' for
the random walk component of the chain is the same as for the full system. However,
as this component is not Markovian, there is no well established definition of the
mixing time in this context; this is why we write ``mixing time''.\\
4. Part (ii) becomes a stronger statement when $\epsilon$ becomes larger.}
\end{remarks}

\medskip\noindent
One of the key steps in order to prove (ii) of Theorem \ref{thm:sub} 
is to prove that the mean squared displacement of 
the walker is at most linear on the time scale $1/\mu$ uniform in the size of the torus. 
This result, which is also of independent interest, is presented next.
Here and throughout the paper, $\dist(x,y)$ will denote the graph distance between two vertices 
$x$ and $y$ in a given graph.

\begin{theorem}\label{thm:subMeanSquaredDisplacement}
Fix $d$ and $p\in (0,p_\critical(\mathbb{Z}^{d}))$.
Then there exists
$C_{\ref{thm:subMeanSquaredDisplacement}}=C_{\ref{thm:subMeanSquaredDisplacement}}(d,p)<\infty$ 
so that for all $n$, for all $\mu$ and for all $t$, if $G=\ndtorus$,
we have that
\begin{equation}\label{eq:subMeanSquaredDisplacement}
\E{\dist(X_{\frac{t}{\mu}},X_0)^2}\le 
C_{\ref{thm:subMeanSquaredDisplacement}} (t\vee 1)
\end{equation}
when we start the full system in stationarity with $u\times \pi_p$.
\end{theorem}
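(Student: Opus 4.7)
The plan is to rescale with $s = t/\mu$ and prove $\E{\dist(X_s, X_0)^2} \leq C(s\mu \vee 1)$ by combining a constant short-time bound with a subadditivity estimate. Working with a $\mathbb{Z}^d$-lift $\tilde X_t$ of the torus walker, set $f(s) := \E{|\tilde X_s - \tilde X_0|^2}$; since $\dist(X_s,X_0)^2 \le |\tilde X_s - \tilde X_0|^2$, it suffices to bound $f$.

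For the short-time bound I would introduce the \emph{union environment}
\[
\eta^\star(e) \;:=\; \mathbf{1}\{e\text{ is open at some moment of }[0,s]\}.
\]
Because distinct edges refresh independently, $\eta^\star$ is a product Bernoulli configuration, with
\[
\pr{\eta^\star(e)=1} \;=\; 1 - (1-p)\,e^{-s\mu p} \;=:\; p_s,
\]
as a direct Poisson calculation shows; this formula depends on $s$ and $\mu$ only through $s\mu$. Since $p_s \to p < p_\critical(\mathbb{Z}^d)$ as $s\mu \to 0$, one can fix $c_0 = c_0(d,p) > 0$ so that $p_{c_0/\mu} < p_\critical(\mathbb{Z}^d)$. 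Because the walker traverses an edge only while it is open, $X_s$ lies in the $\eta^\star$-cluster of $X_0$; standard subcritical cluster-tail estimates, uniform on $\ndtorus$ by monotone coupling to $\mathbb{Z}^d$, then give $f(s) \le \E{\mathrm{diam}(\calC^\star(X_0))^2} \le C_0$ for all $s \le c_0/\mu$.

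To establish the subadditivity $f(s+h) \le f(s) + f(h)$, expand and use the Markov property at time $s$ to write the cross term as $\E{(\tilde X_s - \tilde X_0)\cdot \psi_h(X_s,\eta_s)}$, where $\psi_h(x,\eta) := \E{\tilde X_h' - \tilde X_0' \mid X_0'=x,\eta_0'=\eta}$. Reversibility of $\{(X_t,\eta_t)\}$ with respect to $u\times\pi_p$ (noted just after Theorem \ref{thm:rec}) yields, by identifying the time-reversed lifted trajectory with the sign-flipped forward one, the identity $\E{\tilde X_s - \tilde X_0 \mid X_s,\eta_s} = -\psi_s(X_s,\eta_s)$. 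Hence the cross term equals $-\langle \phi_s, \phi_h\rangle_{\pi_p}$ for the translated drift $\phi_t(\eta) := \psi_t(0,\eta)$, after using translation invariance of $\pi_p$. Since the environment-seen-from-the-walker is itself reversible with respect to $\pi_p$, and $\phi_t$ is the time-integrated action of that semigroup on the local drift $b(\eta) = (2d)^{-1}\sum_v v\,\eta(0,v)$, positive semi-definiteness forces $\langle \phi_s,\phi_h\rangle_{\pi_p} \ge 0$. The cross term is therefore non-positive, and iterating the resulting subadditivity against the short-time bound delivers $f(s) \le \lceil s\mu/c_0\rceil \, C_0 \le C(s\mu \vee 1)$.

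The main obstacle will be the reversibility identity at the level of the $\mathbb{Z}^d$-lift: reversibility is most naturally stated for the torus-valued Markov chain, whereas $\tilde X_s - \tilde X_0$ depends on the entire random path between its torus endpoints. The correspondence between time-reversal on the torus and a sign-flip of the lifted displacement is conceptually clear but demands careful handling of the lift.
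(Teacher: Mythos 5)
Your overall architecture---a uniform short-time bound on the scale $\beta/\mu$ obtained from the subcritical ``union environment'', followed by an iteration step that uses reversibility to make the cross term nonpositive---is exactly the paper's (your cross-term/spectral argument is precisely the standard proof of the Markov type~2 inequality, Lemma~\ref{lem:Ball}, which the paper imports from \cite{NPSS06}). The genuine gap is your decision to run the iteration on the $\mathbb{Z}^d$-lift. The intermediate claim $f(s)=\E{|\tilde X_s-\tilde X_0|^2}\le C_0$ for all $s\le c_0/\mu$ is false, because the lifted displacement is \emph{not} dominated by the diameter of the torus cluster $\calC^\star(X_0)$: when that cluster wraps around the torus, the lift of the trajectory can be unbounded even though the torus cluster is small as a subset of $\ndtorus$. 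Concretely, take $d=1$ (where every $p<1=p_\critical(\mathbb{Z})$ is subcritical), fix $n$, and let $\mu\to0$: with probability at least $p^{n}e^{-nc_0}$, which does not depend on $\mu$, every edge of the circle is open at time $0$ and none refreshes before time $c_0/\mu$; on this event the lifted walker is rate-one simple random walk on $\mathbb{Z}$ run for time $c_0/\mu$, so $f(c_0/\mu)\ge p^{n}e^{-nc_0}\,c_0/\mu\to\infty$. Thus the base case of your induction fails, and since the failure occurs with probability bounded away from $0$ uniformly in $\mu$, it cannot be absorbed as an error term: the quantity $f$ you propose to bound genuinely grows like $1/\mu$ for fixed $n$, whereas the theorem requires a bound uniform in $\mu$.

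The repair is to replace the lift by a bounded equivariant embedding, which is what the paper does: $g_n:\ndtorus\to\mathbb{R}^{2d}$ as in (\ref{eq:Lipschitz}) is uniformly bi-Lipschitz, so the torus-cluster diameter estimate of Lemma~\ref{lem:sub.1StepBound} does control $\E{\|g_n(X_s)-g_n(X_0)\|^2}$ for $s\le\beta/\mu$. Your cross-term computation then survives for $G(x,\eta):=g_n(x)$: since $G(M_{s})-G(M_0)$ is a function of the two endpoints alone, one gets
$\E{(G(M_{s+h})-G(M_s))\cdot(G(M_s)-G(M_0))}=-\left\langle (I-P_h)G,(I-P_s)G\right\rangle\le 0$
directly from reversibility and the spectral theorem, with no need for the environment-seen-from-the-particle formalism or the lifted time-reversal identity you flagged as delicate. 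This is exactly the combination Lemma~\ref{lem:Ball} plus Lemma~\ref{lem:sub.1StepBound} used in the paper.
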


\begin{remark}{\rm
The above inequality is false if the ``$\vee 1$'' is removed since if $t=\mu$ is very 
small, then the LHS is not arbitrarily close to 0.
}\end{remark}

From Theorem \ref{thm:subMeanSquaredDisplacement}, we 
can obtain a similar bound for the full lattice $\mathbb{Z}^{d}$.

\begin{corollary}\label{cor:subMeanSquaredDisplacement}
Fix $d$ and $p\in (0,p_\critical(\mathbb{Z}^{d}))$. Then 
for all $\mu$ and for all $t$, if $G=\bfZ$, we have that
\[
\E{\dist(X_{\frac{t}{\mu}},0)^2}\le C_{\ref{thm:subMeanSquaredDisplacement}} 
(t\vee 1)
\]
when we start the system with distribution $\delta_0\times \pi_p$ and where
$C_{\ref{thm:subMeanSquaredDisplacement}}$ comes from
Theorem \ref{thm:subMeanSquaredDisplacement}.
\end{corollary}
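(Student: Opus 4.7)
The plan is to deduce the bound on $\bfZ$ from the bound on $\ndtorus$ in Theorem~\ref{thm:subMeanSquaredDisplacement} by coupling the two dynamics for each large $n$ and then sending $n \to \infty$. First I would note that by translation invariance of both the torus dynamics and the stationary measure $u \times \pi_p$, the conclusion of Theorem~\ref{thm:subMeanSquaredDisplacement} is preserved if we condition on the walker starting deterministically at $X_0 = 0$, since then the displacement $\dist(X_{t/\mu}, X_0)$ has the same law as under $u \times \pi_p$. So I may invoke the torus bound with initial law $\delta_0 \times \pi_p$, matching the lattice setup.

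Next, fix $d$, $p$, $\mu$, and $t$. For each sufficiently large integer $n$, I would construct a coupling of the dynamics $(X_s, \xi_s)$ on $\bfZ$ started at $(0, \xi_0)$, $\xi_0 \sim \pi_p$, and the dynamics $(Y_s, \eta_s)$ on $\ndtorus$ started at $(0, \eta_0)$, $\eta_0 \sim \pi_p$ as follows. Identify $\ndtorus$ with the fundamental domain $V_n := \{-\lfloor n/2\rfloor, \dots, \lceil n/2\rceil - 1\}^d \subset \bfZ$. For each lattice edge contained in $V_n$, couple its initial state and all subsequent refresh events (the rate-$\mu$ Poisson clock and the Bernoulli$(p)$ coin) with those of the corresponding torus edge; use independent randomness for lattice edges outside $V_n$ and for the ``wrap-around'' torus edges identifying opposite faces of $V_n$. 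Similarly, couple the walker's rate-$1$ jump clock and uniform neighbor choices for each vertex of $V_n$, and use independent clocks for lattice vertices outside. Under this coupling both marginal processes have the correct laws.

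Let $\tau_n := \inf\{s \ge 0 : \dist(X_s, 0) \ge n/4\}$. On $\{s < \tau_n\}$ the lattice walker has only interacted with edges strictly inside $V_n$, so by construction $X_s$ and $Y_s$ agree under the identification, giving $\dist(X_s, 0) = \dist_{\ndtorus}(Y_s, 0)$. Decomposing,
\[
\E{\dist(X_{t/\mu}, 0)^2} = \E{\dist(X_{t/\mu}, 0)^2 \ind{\tau_n > t/\mu}} + \E{\dist(X_{t/\mu}, 0)^2 \ind{\tau_n \le t/\mu}},
\]
the first summand is at most $\E{\dist_{\ndtorus}(Y_{t/\mu}, 0)^2} \le C_{\ref{thm:subMeanSquaredDisplacement}}(t \vee 1)$ by the coupling and Theorem~\ref{thm:subMeanSquaredDisplacement}. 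For the second, I would use the crude bound $\dist(X_s, 0) \le N_s$, where $N_s \sim \mathrm{Poisson}(s)$ counts the walker's attempted jumps, so by Cauchy--Schwarz the second summand is at most $\sqrt{\E{N_{t/\mu}^4}\,\pr{\tau_n \le t/\mu}}$. Holding $t$ and $\mu$ fixed and letting $n \to \infty$, $\pr{\tau_n \le t/\mu} \le \pr{N_{t/\mu} \ge n/4} \to 0$ while $\E{N_{t/\mu}^4}$ remains bounded, so the second summand vanishes. Since the left-hand side does not depend on $n$, the corollary follows. The one technical point that requires care is the coupling construction above; once in place the rest is essentially bookkeeping, and I do not anticipate any significant obstacle.
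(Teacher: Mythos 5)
Your proposal is correct and follows essentially the same route as the paper: reduce to a deterministic starting point via translation invariance, invoke Theorem~\ref{thm:subMeanSquaredDisplacement} on $\ndtorus$, and let $n\to\infty$. The paper compresses the limiting step into ``clearly $\dist(X_{t/\mu},0)$ converges in distribution'' followed by Fatou's lemma, whereas you make the underlying coupling explicit and control the error term quantitatively via the Poisson bound and Cauchy--Schwarz; this is a legitimate (and more detailed) justification of the same step.
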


\medskip\noindent
In Theorem \ref{thm:sub}(ii), Theorem~\ref{thm:subMeanSquaredDisplacement}
and Corollary~\ref{cor:subMeanSquaredDisplacement}, it was assumed that the bond configuration
was started in stationarity (in Theorem \ref{thm:sub}(ii), this is true since this was 
incorporated into the definition of $\tmixRW(\epsilon)$). For 
Theorem~\ref{thm:subMeanSquaredDisplacement} and Corollary~\ref{cor:subMeanSquaredDisplacement}, 
if the initial bond configuration is identically 1, $\mu=\frac{1}{n^{d+2}}$ and 
$t=\frac{1}{n^{d+1}}$, then the LHS's of these results grow to $\infty$ while 
the RHS's stay bounded and hence these results no longer hold. The reason for this is that the bonds
which the walker might encounter during this time period are unlikely to refresh
and so the walker is just doing ordinary random walk on $\bfZ$ or $\ndtorus$.
For similar reasons, if one takes the
initial bond configuration to be identically 1 in the definition of 
$\tmixRW(\epsilon)$, then if $\mu$ is very small, 
$\tmixRW(\epsilon)$ will be of the much smaller order $n^2$. 
However, due to Theorem \ref{thm:sub}(i), one cannot on the other hand
make $\tmixRW(\epsilon)$  larger than order $\frac{n^2}{\mu}$
by choosing an appropriate initial bond configuration.

For general $p$, we obtain the following lower bounds on the mixing time.
This is only of interest in the supercritical and critical cases $p\ge p_\critical$
since Theorem \ref{thm:sub}(ii) essentially implies this in the subcritical case; one minor
difference is that in (i) below, the constants do not depend on $p$.

\begin{theorem}\label{thm:sup}
(i). Given $d\ge 1$ and $\epsilon< 1$, there exist
$C_1>0$ and $n_0>0$
such that, for all $p$, for all $n\ge n_0$ and for all $\mu$, if $G=\dntorus$, then
\[
\tmixRW(\epsilon)\geq C_1 n^2.
\]
(ii). Given $d\ge 1$, $p$ and $\epsilon< 1-\theta_d(p)$, there exists $C_2>0$ 
and $n_0>0$ 
such that, for all $n\ge n_0$ and for all for $\mu$, if $G=\dntorus$, then
\begin{equation}\label{eq:SecondLowerBoundInSuper}
\tmixRW(\epsilon)\geq \frac{C_2}{\mu}.
\end{equation}
In particular, for $\epsilon< 1-\theta_d(p)$, we get a lower bound for
$\tmixRW(\epsilon)$ of order $\frac{1}{\mu}+ n^2$.
\end{theorem}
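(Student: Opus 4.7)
For part (ii), the plan is to exploit the fact that on the time scale $t\lesssim 1/\mu$ the environment has barely refreshed, so the walker is effectively trapped in its initial cluster $C(\eta_0,0)$, which is small with probability at least $1-\theta_d(p)-o(1)$. Given $\epsilon<1-\theta_d(p)$, pick $\delta>0$ with $1-\theta_d(p)-4\delta>\epsilon$. By the cluster-size tail under $\pi_p$ on $\mathbb{Z}^d$, choose $K=K(p,\delta)$ so that $\prdp{|C(0)|>K}\le\theta_d(p)+\delta$; for $n\ge n_0$ large enough this transfers to the torus with an extra loss of at most $\delta$. Set $t=C_2/\mu$ with $C_2=\delta/(2dK)$. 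The at most $2dK$ boundary edges of $C(\eta_0,0)$ each open at rate $p\mu\le\mu$, so the probability any of them opens in $[0,t]$ is at most $2dK\mu t=\delta$, and off this event the walker is confined to $C(\eta_0,0)\subseteq B_K(0)$. Combining, $\pr{X_t\in B_K(0)}\ge 1-\theta_d(p)-3\delta$, while $u(B_K(0))\le(2K+1)^d/n^d\le\delta$ for $n$ large, so $\tv{\calL(X_t)}{u}\ge 1-\theta_d(p)-4\delta>\epsilon$ and $\tmixRW(\epsilon)>C_2/\mu$.

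For part (i), the plan is to show that at time $t=C_1 n^2$ (with $C_1$ small) the walker is concentrated in a ball $B_{\alpha n}(0)$ whose uniform measure is $\ll 1-\epsilon$. This will follow from the uniform diffusive bound
\[
\E{\dist(X_t,0)^2}\le C_d\,t\qquad\text{for every }p,\mu,t,
\]
with the walker started at the origin and the environment at $\pi_p$: Chebyshev then gives $\pr{\dist(X_t,0)\ge\alpha n}\le C_dC_1/\alpha^2$, and choosing $\alpha$ so that $u(B_{\alpha n}(0))\le(1-\epsilon)/3$ and then $C_1$ so that $C_dC_1/\alpha^2\le(1-\epsilon)/3$ forces $\tv{\calL(X_t)}{u}>\epsilon$, i.e.\ $\tmixRW(\epsilon)>C_1n^2$.

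The uniform diffusive bound is the main obstacle. I plan to obtain it by projecting onto the first coordinate and writing $X_t^{(1)}=M_t+A_t$, where $M_t$ is the compensated jump martingale (so $\E{M_t^2}=\E{[M]_t}\le\E{N_t}=t$, since each jump has magnitude at most $1$ and the attempt process is Poisson of rate $1$) and $A_t=\int_0^t b_s\,ds$ is the predictable drift with $b_s=(2d)^{-1}\bigl(\eta_s(X_{s^-},X_{s^-}+e_1)-\eta_s(X_{s^-},X_{s^-}-e_1)\bigr)$. Under stationarity of $(X,\eta)$ with respect to $u\times\pi_p$, $b_s$ has mean zero and bounded second moment, giving
\[
\E{A_t^2}=2\int_0^t(t-s)\,\E{b_0\,b_s}\,ds.
\]
Bounding this by $Ct$ uniformly in $\mu$ is the delicate step: for moderate $\mu$ it follows from a Kipnis--Varadhan type argument using exponential decay of the covariance, while for small $\mu$ the environment is nearly frozen and one instead uses percolation estimates (bounded cluster diameter off the giant component and a quenched invariance-principle estimate on the giant component). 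Finally, the ``in particular'' statement is immediate by combining (i) and (ii): for $\epsilon<1-\theta_d(p)$ both lower bounds apply, so $\tmixRW(\epsilon)\ge\max(C_1n^2,C_2/\mu)\ge\tfrac12(C_1n^2+C_2/\mu)$.
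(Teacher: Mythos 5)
Your part (ii) is correct and is essentially the paper's argument: condition on the initial cluster of the origin being small and on its boundary edges not opening before time $C_2/\mu$, then test the law of $X_t$ against $u$ on a fixed ball. Your reduction in part (i) --- Chebyshev applied to a uniform diffusive bound $\E{\dist(X_t,X_0)^2}\le C_d\,t$ --- is also exactly the paper's route; that bound is precisely Theorem \ref{thm:supMeanSquaredDisplacement}.

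The gap is that you have not proved the diffusive bound, and the route you sketch for it would not work. In the decomposition $X_t^{(1)}=M_t+A_t$ the martingale part is fine, but bounding $\E{A_t^2}=2\int_0^t(t-s)\E{b_0b_s}\,ds$ by $C t$ amounts to bounding the time-integrated autocovariance of the drift (its $H_{-1}$-norm) uniformly in $n$, in $\mu$ and, crucially, in $p$: the theorem is asserted for \emph{every} $p$, including $p=p_\critical(\mathbb{Z}^{d})$, with a constant depending only on $d$. The covariance of $b_s$ does not decay at a rate uniform in $\mu$ (the environment relaxes on time scale $1/\mu$), so ``exponential decay of the covariance'' is unavailable precisely in the regime $\mu\to0$ the theorem must cover; and in the frozen regime your dichotomy ``small clusters off the giant component / quenched invariance principle on the giant component'' breaks down at and near criticality, where there is no giant component with diffusive heat-kernel bounds and cluster diameters are polynomial in $n$. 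Note also the remark following Corollary \ref{cor:supMeanSquaredDisplacement}: the diffusive bound is \emph{false} for non-stationary initial environments (the Barlow--Perkins example), so any proof must use stationarity and reversibility quantitatively. The paper does this with one clean tool: the Markov type $2$ inequality (Lemma \ref{lem:Ball}) applied to the stationary reversible chain $Y_k=M_{kv}$, combined with the uniformly bi-Lipschitz embedding $g_n$ of $\dntorus$ into $\mathbb{R}^{2d}$; the only one-step input is the Poisson domination $\E{\dist(X_s,X_0)^2}\le s+s^2$. Substituting that argument for your corrector sketch closes the gap; the rest of your part (i) then goes through as in the paper's proof of Theorem \ref{thm:sub}(ii).
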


\begin{remark}{\rm
The lower bound in (ii) holds only for sufficiently small $\epsilon\in(0,1)$ depending on $p$.
To see this, take $d=2$ and choose $p$ sufficiently close to $1$ so 
that $\theta_2(p)>.999$. Take $n$ large, $\mu \ll n^{-4}$ 
and $t=Cn^2$, where $C$ is a large enough constant. 
Then, with probability going to 1 with $n$, the giant cluster
at time 0 will contain at least $.999$ fraction of the vertices. Therefore,
with probability about $.999$, the origin will be contained in this giant cluster.
By \cite{BM03}, with probability going to 1 with $n$, this giant cluster will have a 
mixing time of order $n^2$. Therefore, if $C$ is large,
a random walk on this giant cluster run for $t=Cn^2$ units of time will
be, in total variation, within $.0001$ of the uniform distribution on this cluster and hence
be, in total variation, within $.0002$ of the uniform distribution $u$. Since
$\mu \ll n^{-4}$, no edges will refresh up to time $t$ with very high probability and hence 
$\tv{\calL(X_t)}{u} \leq .0002$. Since $t \ll \frac{1}{\mu}$, we see that 
(\ref{eq:SecondLowerBoundInSuper}) above is not true for all $\epsilon$ but rather only 
for small $\epsilon$ depending on $p$. This strange dependence of the mixing time on 
$\epsilon$ cannot occur for a Markov process but can only occur here since
$\{X_t\}_{t\ge 0}$ is not Markovian.
}\end{remark}

We mention that heuristics suggest that the lower bound of $\frac{1}{\mu}+ n^2$
for the supercritical case should be the correct order.

We now give an analogue of Theorem \ref{thm:subMeanSquaredDisplacement} for general $p$.
This is also of interest in itself and as before is a key step in proving 
Theorem \ref{thm:sup}(i). While it is of course very similar to 
Theorem \ref{thm:subMeanSquaredDisplacement}, the fundamental difference between this result and
the latter result is that we do not now obtain linear mean squared displacement on the time scale
$1/\mu$ as we had before.

\begin{theorem}\label{thm:supMeanSquaredDisplacement}
Fix $d\ge 1$. Then there exists 
$C_{\ref{thm:supMeanSquaredDisplacement}}=C_{\ref{thm:supMeanSquaredDisplacement}}(d)$ so that 
for all $p$, for all $n$, for all $\mu$ and for all $t$, if $G=\dntorus$, then
\begin{equation}\label{eq:thm:supMeanSquaredDisplacement}
\E{\dist(X_{t},X_0)^2}\le C_{\ref{thm:supMeanSquaredDisplacement}} t
\end{equation}
when we start the full system in stationarity with $u\times \pi_p$.
\end{theorem}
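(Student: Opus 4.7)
The plan is a martingale-plus-drift decomposition of the walker lifted to $\mathbb{Z}^d$, with the drift controlled by a Kipnis--Varadhan style $H_{-1}$ estimate made uniform in $\mu$, $n$ and $p$. Let $\tilde X_t \in \mathbb{Z}^d$ denote the lifted walker (cumulative sum of signed unit jumps), and write $\tilde D_t := \tilde X_t - \tilde X_0$. Since any torus path between $X_0$ and $X_t$ is shorter than the corresponding lifted path, $\dist(X_t, X_0) \le \|\tilde D_t\|_1 \le \sqrt{d}\,\|\tilde D_t\|_2$, so it suffices to prove $\mathbb{E}[(\tilde D_t^{(i)})^2] = O(t/d)$ for each coordinate $i$.

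For each $i$, Dynkin's formula gives $\tilde D_t^{(i)} = N_t^{(i)} + \int_0^t \phi^{(i)}(M_s)\,ds$, where
\[
\phi^{(i)}(X,\eta) := \tfrac{1}{2d}\bigl(\eta(X,X+e_i) - \eta(X,X-e_i)\bigr)
\]
has mean zero under $u \times \pi_p$ and $N^{(i)}$ is a martingale with predictable quadratic variation $\int_0^t \tfrac{\eta(X_s,X_s+e_i)+\eta(X_s,X_s-e_i)}{2d}\,ds$; taking expectations gives $\mathbb{E}[(N_t^{(i)})^2] \le pt/d \le t/d$.

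The crux is the drift bound. Since $M_t$ is reversible in stationarity, a standard variance estimate yields $\mathbb{E}\bigl[\bigl(\int_0^t \phi^{(i)}(M_s)\,ds\bigr)^2\bigr] \le 2t\,\|\phi^{(i)}\|_{H_{-1}}^2$, with $\|\phi^{(i)}\|_{H_{-1}}^2 = \sup_h \langle \phi^{(i)}, h\rangle^2/\mathcal{E}(h,h)$. Translation invariance of $u \times \pi_p$ (shift $X \mapsto X+e_i$ in the $\eta(X,X-e_i)$ term) rewrites
\[
\langle \phi^{(i)}, h\rangle \;=\; \tfrac{1}{2d}\,\mathbb{E}\bigl[\eta(X,X+e_i)\bigl(h(X,\eta) - h(X+e_i,\eta)\bigr)\bigr].
\]
A Cauchy--Schwarz with weight $\eta(X,X+e_i)$ gives $|\langle \phi^{(i)}, h\rangle|^2 \le \frac{p}{(2d)^2}\,\mathbb{E}[\eta(X,X+e_i)(h(X+e_i,\eta)-h(X,\eta))^2]$, and the walker contribution to the Dirichlet form, restricted to the single direction $+e_i$, bounds this second moment by $4d\,\mathcal{E}(h,h)$. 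Hence $|\langle \phi^{(i)}, h\rangle|^2 \le (p/d)\,\mathcal{E}(h,h)$, so $\|\phi^{(i)}\|_{H_{-1}}^2 \le p/d \le 1/d$. Combining with the martingale estimate yields $\mathbb{E}[(\tilde D_t^{(i)})^2] \le 6t/d$, and summing over $i$ gives the theorem with $C(d) = O(d)$.

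The main obstacle is obtaining the drift estimate uniformly in $\mu$: a naive spectral-gap route $\|\phi^{(i)}\|_{H_{-1}}^2 \le \|\phi^{(i)}\|_2^2/\mathrm{gap}$ degenerates as $\mu \downarrow 0$ because the gap of the full chain is driven by the refresh rate, while the theorem must hold even for $\mu = 0$ (quenched SRW on percolation). The translation-invariance identity above is precisely what sidesteps this: only the walker part of the Dirichlet form is used, and that part is independent of $\mu$.
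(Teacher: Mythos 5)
Your proof is correct, and it takes a genuinely different route from the paper's. The paper handles $t\le 1$ by dominating the number of attempted jumps by a Poisson variable, and for $t\ge 1$ it views the full system at unit time steps as a stationary reversible chain, embeds $\dntorus$ bi--Lipschitzly into $\mathbb{R}^{2d}$ via $g_n$, and applies Ball's Markov type $2$ inequality (Lemma \ref{lem:Ball}). Your argument instead lifts the walk to $\mathbb{Z}^d$, splits each coordinate into a Dynkin martingale plus the corrector $\int_0^t\phi^{(i)}(M_s)\,ds$, and kills the corrector with the Kipnis--Varadhan bound $\E{(\int_0^t\phi\,ds)^2}\le 2t\,\|\phi\|_{H_{-1}}^2$; the decisive point, which you identify correctly, is that after the translation-invariance rewriting the estimate $\langle\phi^{(i)},h\rangle^2\le (p/d)\,\mathcal{E}(h,h)$ uses only the walker part of the Dirichlet form, so the constant is uniform in $\mu$ where a naive spectral-gap bound would degenerate. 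All the individual steps check out: the compensator of the lifted jump process is exactly your $\phi^{(i)}$, the predictable quadratic variation gives $\E{(N_t^{(i)})^2}\le pt/d$ in stationarity, the weighted Cauchy--Schwarz and the single-direction restriction of the walker Dirichlet form give $\|\phi^{(i)}\|_{H_{-1}}^2\le p/d$, and the covering map $\mathbb{Z}^d\to\dntorus$ is a graph contraction so $\dist(X_t,X_0)\le\|\tilde D_t\|_1$. Both proofs rest on reversibility and stationarity of $u\times\pi_p$ and fail for non-stationary environments, as they must (cf.\ the Barlow--Perkins remark after Corollary \ref{cor:supMeanSquaredDisplacement}). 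What each buys: yours avoids the case split at $t=1$, yields an explicit constant $C(d)=O(d)$, and isolates exactly which part of the generator does the work; the paper's Markov-type argument is softer, transfers verbatim to the subcritical Theorem \ref{thm:subMeanSquaredDisplacement} on the time scale $1/\mu$ (with Lemma \ref{lem:sub.1StepBound} replacing the Poisson bound as the one-step input), and comes with a maximal version that is reused for the hitting-time lower bounds. To make your write-up airtight you should include the short spectral-theorem derivation of $\E{(\int_0^t\phi\,ds)^2}\le 2t\sup_h\langle\phi,h\rangle^2/\mathcal{E}(h,h)$ for finite reversible chains, observing that finiteness of this supremum already forces $\phi^{(i)}$ to be orthogonal to the kernel of the generator, so no separate irreducibility discussion is needed.
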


From this, we can obtain, as before, a similar bound on the full lattice $\mathbb{Z}^{d}$.

\begin{corollary}\label{cor:supMeanSquaredDisplacement}
Fix $d\ge 1$. For all $p$, for all $\mu$ and for all $t$, if $G=\bfZ$, we have 
\begin{equation}\label{eq:cor:supMeanSquaredDisplacement}
\E{\dist(X_t,0)^2}\le C_{\ref{thm:supMeanSquaredDisplacement}} t
\end{equation}
when we start the full system with distribution $\delta_0\times \pi_p$ and where
$C_{\ref{thm:supMeanSquaredDisplacement}}$ comes from 
Theorem \ref{thm:supMeanSquaredDisplacement}.
\end{corollary}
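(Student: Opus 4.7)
\textbf{Proof plan for Corollary \ref{cor:supMeanSquaredDisplacement}.} The plan is to transfer the torus bound from Theorem \ref{thm:supMeanSquaredDisplacement} to $\bfZ$ by coupling the $\bfZ$ dynamical percolation walk with the process on $\ndtorus$ for large $n$, in such a way that the two walks agree until the $\bfZ$ walker escapes a fundamental domain. Since the only obstruction to identifying the two systems is the walker wrapping around the torus, and since the number of walker steps up to time $t$ is Poisson$(t)$ independently of $\mu$, letting $n \to \infty$ will recover the same linear-in-$t$ constant.

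Concretely, fix $d$, $p$, $\mu$ and $t$, and for each large $n$ identify the box $B_n := \{v \in \bfZ : \|v\|_\infty < n/2\}$ with a fundamental domain of $\ndtorus$. I would construct both processes on a common probability space carrying a single rate-$1$ Poisson clock for the walker (with i.i.d.\ uniformly random direction marks attached to each ring) and, for each edge $e$ of $\bfZ$, a single rate-$\mu$ refresh clock with i.i.d.\ Bernoulli$(p)$ relabels. For the torus, each torus edge $\bar e$ is assigned the refresh process of its unique lift lying in $B_n$; then the torus bond configuration is i.i.d.\ $\pi_p$ as required. By vertex-transitivity of $\ndtorus$ and translation invariance of $\pi_p$, Theorem \ref{thm:supMeanSquaredDisplacement} applies equally when the torus system is started from $\delta_0 \times \pi_p$, giving $\E{\dist(X_t^{\mathrm{tor}}, 0)^2} \le C_{\ref{thm:supMeanSquaredDisplacement}}\, t$.

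Let $\tau_n$ be the first time the $\bfZ$-walker leaves $B_n$. Before $\tau_n$ the two walkers occupy the same vertex and see identical edge states, so $X_s^{\bfZ} = X_s^{\mathrm{tor}}$ for all $s < \tau_n$, and on $\{\tau_n > t\}$ the two graph distances to the origin coincide. Letting $N_t$ denote the number of walker-clock rings in $[0,t]$, we have $N_t \sim \mathrm{Poisson}(t)$ independently of $\mu$, $\dist(X_t^{\bfZ}, 0) \le N_t$ deterministically, and $\{\tau_n \le t\} \subseteq \{N_t \ge n/2\}$. Splitting the expectation,
\[
\E{\dist(X_t^{\bfZ}, 0)^2} \le \E{\dist(X_t^{\mathrm{tor}}, 0)^2 \ind{\tau_n > t}} + \E{N_t^2 \ind{N_t \ge n/2}} \le C_{\ref{thm:supMeanSquaredDisplacement}}\, t + \E{N_t^2 \ind{N_t \ge n/2}}.
\]
Since $\E{N_t^2} = t + t^2 < \infty$, the second term vanishes as $n \to \infty$ by dominated convergence, yielding the claim. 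The argument is essentially routine; the only real care is in setting up the coupling so that the torus marginals are correct and the two walks stay synchronized up to $\tau_n$, and I do not expect a substantive obstacle.
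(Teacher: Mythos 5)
Your proposal is correct and follows the same overall strategy as the paper (approximate $\bfZ$ by large tori and use vertex-transitivity of $\ndtorus$ to restart Theorem \ref{thm:supMeanSquaredDisplacement} from $\delta_0\times\pi_p$), but it passes to the limit differently. The paper simply asserts that $\dist(X_t,0)$ under the torus dynamics converges in distribution to its $\bfZ$ counterpart and then applies Fatou's lemma; you instead build an explicit coupling and control the error term quantitatively, bounding $\dist(X_t^{\bfZ},0)$ by the Poisson number $N_t$ of attempted steps and using $\{\tau_n\le t\}\subseteq\{N_t\gtrsim n/2\}$ together with $\E{N_t^2}<\infty$. Your version is more hands-on and in effect supplies the justification for the paper's ``clearly converges in distribution'' step, at the cost of some bookkeeping. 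One detail to tighten: a torus edge need not have a lift with both endpoints in $B_n$, and for a walker sitting at a boundary vertex of $B_n$ an adjacent $\bfZ$-edge may be identified with a torus edge whose designated lift is the ``wrap-around'' copy, so the two systems can disagree on an adjacent edge state strictly before the walker exits $B_n$. This is harmless: redefine $\tau_n$ as the first time the walker becomes adjacent to an edge not wholly contained in $B_n$ (equivalently, first reaches $\ell^\infty$-distance about $n/2-1$ from the origin); the identity of the two walks up to $\tau_n$, the equality of the two graph distances on $\{\tau_n>t\}$, and the inclusion $\{\tau_n\le t\}\subseteq\{N_t\ge n/2-2\}$ all survive, and the argument goes through unchanged.
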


\begin{remark}{\rm
Theorem \ref{thm:supMeanSquaredDisplacement} and Corollary
\ref{cor:supMeanSquaredDisplacement} are false if we start the bond configuration
in an arbitrary configuration. In \cite{BarlowPerkins89}, a subgraph $G$ of $\mathbb{Z}^{2}$
is constructed such that if one runs random walk on it, the expected mean squared distance 
to the origin at time $t^2$ is much larger than $t$ for large $t$. If we start the bond 
configuration in the state $G$ and $\mu$ is sufficiently small, then clearly 
(\ref{eq:cor:supMeanSquaredDisplacement}) fails for large $t$ provided (for example)
that $t^{d+1}\mu=o(1)$. Similarly, (\ref{eq:thm:supMeanSquaredDisplacement}) fails
for such $t$ and large $n$.
}\end{remark}

For Markov chains, one is often interested in studying hitting times. For discrete time
random walk on the torus $\ndtorus$, it is known that the maximum expectation of the time
it takes to hit a point
from an arbitary starting point behaves, up to constants, as $n^d$ for $d\ge 3$,
$n^2\log n$ for $d=2$ and $n^2$ for $d=1$. Here we obtain an analogous result for our dynamical
model in the subcritical regime.

For $y\in\ndtorus$, let $\sigma_y:=\inf\{t\ge 0:X_t=y\}$ be the first hitting time of $y$.
We will always start our system with distribution $\delta_0\times \pi_p$ (otherwise the results
would be substantially different). Finally, we let 
$H_d(n):=\max\{\E{\sigma_y}:y\in\dntorus\}$ denote the maximum expected hitting time.

\begin{theorem}\label{thm:hitting} 
For all $d\ge 1$ and $p\in (0,p_\critical(\mathbb{Z}^{d}))$, there exists
$C_{\ref{thm:hitting}}=C_{\ref{thm:hitting}}(d,p)<\infty$ so that the following hold.\\
(i). For all $n$ and for all $\mu\le 1$,
\[
 \frac{n^2}{C_{\ref{thm:hitting}}\,\mu} \le H_1(n) \le \frac{C_{\ref{thm:hitting}}\,n^2}{\mu}.
\]
(ii). For all $n$ and for all $\mu\le 1$,
\[
 \frac{n^2\log n}{C_{\ref{thm:hitting}}\,\mu} \le H_2(n) \le \frac{C_{\ref{thm:hitting}}\,n^2\log n}{\mu}.
\]
(iii). For all $d\ge 3$, for all $n$ and for all $\mu\le 1$,
\[
\frac{n^d}{C_{\ref{thm:hitting}}\,\mu} \le H_d(n) \le \frac{C_{\ref{thm:hitting}}\,n^d}{\mu}.
\]
\end{theorem}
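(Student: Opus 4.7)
The bounds are the classical simple-random-walk hitting-time estimates on $\dntorus$ (of order $n^2$, $n^2\log n$, and $n^d$ for $d=1,2,\geq 3$), rescaled by the effective-speed factor $1/\mu$. In both directions the argument is driven by the local time
\[
L_y(T):=\int_0^T \ind{X_s=y}\,ds
\]
together with a heat-kernel upper bound $p_s(y,y)\leq C(\mu s\vee 1)^{-d/2}$ derived from the mean-squared displacement of Theorem~\ref{thm:subMeanSquaredDisplacement}. Both bounds crucially use the vertex-transitivity of $\dntorus$ and the reversibility of $\{M_t\}$.

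\textbf{Upper bound.}
Starting the full system at $u\times\pi_p$, stationarity gives $\E{L_y(T)}=T/n^d$. The strong Markov property at $\sigma_y$ combined with reversibility yields
\[
\pr{\sigma_y\leq T} \;\geq\; \frac{T/n^d}{\max_{\eta}\,\Econd{L_y(T)}{X_0=y,\;\eta_0=\eta}},
\]
and the denominator is bounded by $\int_0^T p_s(y,y)\,ds$, which under the heat-kernel bound is $O(\sqrt{T/\mu})$, $O(\log(\mu T)/\mu)$, and $O(1/\mu)$ in the three dimensional regimes. Choosing $T$ equal to the asserted upper bound forces $\pr{\sigma_y\leq T}\geq c>0$, and iterating in blocks of length $T$ yields $\E{\sigma_y}\leq CT$ when starting from $u\times\pi_p$. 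The general inequality $t_{\mathrm{hit}}\leq 2\,t_{\mathrm{avg}}$ for reversible Markov chains, applied to the full process $\{M_t\}$, then gives the claimed upper bound on $H_d(n)$.

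\textbf{Lower bound.}
Let $R(t):=\{X_s:0\leq s\leq t\}$ be the range of the walker. By pigeonhole there is a vertex $y^*\in\dntorus$ with $\pr{y^*\in R(t)}\leq \E{|R(t)|}/n^d$, so
\[
H_d(n)\;\geq\; \E{\sigma_{y^*}} \;\geq\; t\bigl(1-\E{|R(t)|}/n^d\bigr).
\]
It therefore suffices to exhibit $t$ equal to a small multiple of the asserted lower bound for which $\E{|R(t)|}\leq n^d/2$. The required range estimates are $\sqrt{\mu t\vee 1}$ (for $d=1$), $\mu t/\log(\mu t)$ (for $d=2$ and $\mu t\geq 2$), and $(\mu t)\vee 1$ (for $d\geq 3$). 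The case $d=1$ follows from Theorem~\ref{thm:subMeanSquaredDisplacement}, since $|R(t)|$ is bounded by twice the diameter of the trajectory. For $d\geq 3$ one uses the subcritical cluster structure: at any fixed moment the walker is trapped in its open cluster (expected size $O(1)$ by exponential cluster-size decay), and can only enter new clusters when edges on the boundary refresh --- a Poissonian event of total rate $O(\mu)$. The $d=2$ case requires, in addition, the logarithmic savings characteristic of planar random walks; I would obtain this by a Dvoretzky--Erd\H{o}s-type argument applied to the sequence of distinct clusters encountered by the walker.

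\textbf{Main obstacle.}
The most technical step is the pointwise heat-kernel bound $p_s(y,y)\leq C(\mu s\vee 1)^{-d/2}$: upgrading the second-moment statement of Theorem~\ref{thm:subMeanSquaredDisplacement} to a pointwise estimate requires a Nash inequality (equivalently, an $L^2\to L^\infty$ contraction bound), and the $\mu$-dependence must be tracked carefully through the spectral structure underlying Theorem~\ref{thm:sub}(i). The $d=2$ lower-bound range estimate is a close second in difficulty, since the logarithmic refinement does not follow from mean-squared displacement alone and requires a genuine planar recurrence argument for the walker.
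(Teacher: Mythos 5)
Your architecture is the natural one and at a high level parallels the paper's: local times and return probabilities for the upper bounds, range plus pigeonhole for the lower bounds, and the correct orders of magnitude in all three regimes. But the two items you defer to the end as ``obstacles'' are precisely where the proof lives, and neither is closed. The on-diagonal bound $p_s(y,y)\le C(\mu s\vee 1)^{-d/2}$ does not follow from Theorem \ref{thm:subMeanSquaredDisplacement} (diffusive second moments are compatible with anomalous on-diagonal behavior), and the version you actually need is worse: after applying the strong Markov property at $\sigma_y$, the environment seen by the walker is no longer $\pi_p$, so you need the return-probability bound uniformly over the bond configurations arising at the hitting time. The remark following Corollary \ref{cor:supMeanSquaredDisplacement} (the Barlow--Perkins construction) shows that environment-uniform diffusive estimates are false in this model, so there is no Nash-inequality shortcut here. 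The paper avoids the issue entirely by passing to the regeneration times $\tau_j$ of Section \ref{sec:UpperBoundSub}: at those times the environment is ``fresh'', the induced walk $X_{\tilde\tau_j}$ is a genuine random walk whose step law contains a component $\gamma\,\nu_{\rm LSRW}$ with $\gamma=\gamma(d,p)>0$ (equation (\ref{eq:SRWcomponent})), the needed on-diagonal upper and lower bounds (Lemmas \ref{lem:ReplacementOfLCLT}--\ref{lem:NeededLCLTCloseGuys}) are then elementary by conditioning on the binomial number of lazy-SRW steps, and Theorem \ref{thm:RenewalsHaveMeanOne} plus Wald's identity converts excursion counts into real time of order $1/\mu$ each. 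Without some such device, your upper bound is not a proof.

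The same gap reappears in the $d=2$ lower bound, which is the hardest case. You correctly identify that one needs $\E{|R(t)|}\lesssim \mu t/\log(\mu t)$, but a ``Dvoretzky--Erd\H{o}s-type argument'' for a non-Markovian walker requires a return-probability \emph{lower} bound of order $k^{-1}$ for the process at scale $k/\mu$, which again is exactly what Lemma \ref{lem:NeededLCLTCloseGuys} supplies for the regeneration walk and which you have no independent access to. (The paper in fact proves only a conditional version, by contradiction: if every $y$ were hit with probability at least $3/4$ by time $\tilde\tau_{Cn^2\log n}$, then each visited $y$ would on average be revisited in order $\log n$ distinct excursions, forcing the expected range to be at most $n^2/4$ and contradicting the assumed coverage.) Two smaller points: for $d=1$ the diameter of the trajectory is a running maximum, so you need the maximal version of Lemma \ref{lem:Ball} cited in the paper, not the fixed-time bound of Theorem \ref{thm:subMeanSquaredDisplacement}; and the ``iterate in blocks of length $T$'' step in your upper bound silently requires the hitting estimate from an arbitrary (non-stationary) state of the full chain, which is again the uniform-in-environment problem. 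Your $d\ge 3$ lower bound via the $O(1)$ expected cluster size and the $O(\mu)$ refresh rate is essentially the paper's argument and is fine.
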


\medskip\noindent
One of the usual methods for obtaining hitting time results (see \cite{LPW}) is to first develop
and then to apply results from electrical networks. However, in our case, where the network itself is
evolving in time, this approach does not seem to be applicable. More generally, many of the standard methods
for analyzing Markov chains do not seem helpful in studying the case, such as this,
where the transition probabilities are evolving in time stochastically.

\medskip\noindent
{\bf Previous work.}\\
Studying random walk in a random environment has been done since the early 1970's.
In the initial models studied, one chose a random environment which
would then be used to give the transition probabilities for a random walk. Once chosen, this 
environment would be fixed. There are many papers on random walk in 
random environment, far too many to list here.

After this, one studied random walks in an evolving random environment. 
The evolving random environment could be of a quite general nature. A sample of papers from
this area are 
\cite{Avena12}, \cite{AHR09}, \cite{AHRLLN}, \cite{ASV12}, \cite{BZ06}, \cite{BMP97}, \cite{DKL08}
and \cite{HSS12}.

However, the focus of these papers and the questions addressed in them are of a very different 
nature than the focus and questions addressed in the present paper. We therefore do not 
attempt to describe at all the results in these papers.

\noindent
{\bf Organization.}\\
The rest of the paper is organized as follows. In Section \ref{sec:Background}, 
various background will be given. 
In Section \ref{sec:rec}, Theorem \ref{thm:rec} is proved as well
as a central limit theorem and a general technical lemma
which will be used here as well as later on.
In Section \ref{sec:LowerBoundSub}, we prove the mean squared displacement results
and the lower bound on the mixing time in the subcritical regime:
Theorem \ref{thm:subMeanSquaredDisplacement}, Corollary \ref{cor:subMeanSquaredDisplacement} and
Theorem \ref{thm:sub}(ii).
In Section \ref{sec:LowerBoundGeneral}, we prove the mean squared displacement results
and the lower bound on the mixing time in the general case:
Theorem \ref{thm:supMeanSquaredDisplacement}, Corollary \ref{cor:supMeanSquaredDisplacement} and
Theorem \ref{thm:sup}. In Section \ref{sec:UpperBoundSub}, the upper bound on the mixing
time in the subcritical regime, Theorem \ref{thm:sub}(i), is proved. Finally, in 
Section \ref{sec:hitting}, Theorem \ref{thm:hitting} is proved. In
Section \ref{sec:question}, we state an open question.

\section{Background}\label{sec:Background}

In this section, we provide various background.

\noindent
{\bf Percolation.}\\
In percolation, one has a connected locally finite graph $G=(V,E)$ and a parameter
$p\in(0,1)$. One then declares each edge 
to be open (state 1) with probability $p$ and closed (state 0) with probability $1-p$
independently for different edges. Throughout this paper, we write $\pi_p$ 
for the corresponding product measure. One then studies 
the structure of the connected components (clusters) of the resulting 
subgraph of $G$ consisting of all vertices and all open edges. 
We will use $\bfP_{G,p}$ to denote probabilities when we perform $p$-percolation on
$G$. If $G$ is infinite,
the first question that can be asked is whether an infinite connected component 
exists (in which case we say {\em percolation} occurs). 
Writing $\calC$ for this latter event, Kolmogorov's 0-1 law tells us that
the probability of $\calC$ is, for fixed $G$ and $p$, either 0 or 1.
Since $\bfP_{G,p}(\calC)$ is nondecreasing in $p$, there
exists a critical probability $p_c=p_c(G)\in[0,1]$ such that
\[
\bfP_{G,p}(\calC)=\left\{
\begin{array}{ll}
0 & \mbox{for } p<p_c \\
1 & \mbox{for } p>p_c.
\end{array} \right.
\]
For all $x\in V$, let $C(x)$ denote the connected component of $x$, i.e.\ the set of vertices
having an open path to $x$. Finally, we let 
$\theta_d(p):=\bfP_{\mathbb{Z}^{d},p}(|C(0)|=\infty)$ where $0$ denotes the origin of
$\mathbb{Z}^{d}$. See \cite{Grimmett} for a comprehensive study of percolation.

Let $\mathbb{T}^{d,n}$ be the $d$-dimensional discrete torus with vertex set $[0,1,\ldots,n)^d$.
This is a transitive graph with $n^d$ vertices. For large $n$, the behavior of percolation on 
$\mathbb{T}^{d,n}$ is quite different depending on whether $p > p_c(\mathbb{Z}^d)$ or
$p < p_c(\mathbb{Z}^d)$; in this way the finite systems ``see'' the critical value for the infinite 
system. In particular, if $p > p_c(\mathbb{Z}^d)$, then with probability going to 1
with $n$, there will be a unique connected component with size of order $n^d$ 
(called the \emph{giant cluster}) 
while for $p < p_c(\mathbb{Z}^d)$, with probability going to 1 with $n$, all of the 
connected components will have size of order at most $\log(n)$. 

\noindent
{\bf Dynamical Percolation.}\\
This model was introduced in Section \ref{sec:Introduction}. Here we mention that the model
can equally well be described by having each edge of $G$ independently refresh its state
at rate $\mu$ and when it refreshes, it chooses to be in state 1 with probability $p$ and 
in state 0 with probability $1-p$ independently of everything else. We have already mentioned that
for all $G,p$ and $\mu$, the product measure $\pi_p$ is a stationary reversible 
probability measure for $\{\eta_t\}_{t\ge 0}$.
Dynamical percolation was introduced independently in \cite{HPS97} and by Itai Benjamini.
The types of questions that have been asked 
for this model is whether there exist exceptional times at which the 
percolation configuration looks markedly different from that at a fixed time.
See \cite{Steif09} for a recent survey of the subject. Our focus however in this paper
is quite different.

\noindent
{\bf Random walk on Dynamical Percolation.}\\
Random walk on Dynamical Percolation was introduced in Section \ref{sec:Introduction}. 
Throughout this paper, we will assume $\mu\le 1$. This model is most interesting when $\mu\to 0$
as the size of the graph gets large. Note that if $\mu=\infty$, then $\{X_t\}_{t\ge 0}$ 
would simply be ordinary simple random walk on $G$ with 
time scaled by $p$ and hence would not be interesting. One would similarly expect that
if $\mu$ is of order 1, the system should behave in various ways like ordinary random walk.
(We will see for example that the usual recurrence/transience dichotomy for random walk holds
in this model for fixed $\mu$.) This is why $\mu\to 0$ is the interesting regime.

\noindent
{\bf Mixing times for Markov chains.}\\
We recall the following standard definitions. Given two probability measures
$m_1$ and $m_2$ on a finite set $S$, we define the total variation distance between $m_1$ and 
$m_2$ to be
\[
\tv{m_1}{m_2} := \frac{1}{2} \sum_{s \in S}|m_1(s)-m_2(s)|.
\]
If $X$ and $Y$ are random variables, by $\tv{\calL(X)}{\calL(Y)}$, 
we will mean the total variation distance between their laws. 
There are other equivalent definitions; see \cite{LPW}, Section 4.2. One which we will need is that 
\[
\tv{\calL(X)}{\calL(Y)}=\inf\{\bfP(X'\neq Y')\}
\]
where the infimum is taken over all pairs of random variables $(X',Y')$ defined on the same space
where $X'$ has the same distribution as $X$ and $Y'$ has the same distribution as $Y$.

Given a continuous time finite state irreducible Markov chain with state space $S$, $t\ge 0$ and $x\in S$,
we let $P^t(x,\cdot)$ be the distribution of the chain at time $t$ when started in state $x$ and we
let $\pi$ denote the unique stationary distribution for the chain.

Next, one defines
\[
\tmix(\epsilon) := \inf\{t\geq 0 \colon \max_{x\in S} \tv{P^t(x,\cdot)}{\pi} \leq \epsilon\},
\]
in which case the standard definition of the mixing time of a chain, denoted by $\tmix$,
is $\tmix(1/4)$. It is well known (see \cite{LPW}, Section 4.5 for the discrete-time analogue)
that $\max_x \tv{P^t(x,\cdot)}{\pi}$ is decreasing in $t$ and that 
\begin{equation}\label{eq:MixingIterating}
\tmix(\epsilon)\le \lceil\log_2 \epsilon^{-1}\rceil \tmix.
\end{equation}

In the theory of mixing times, one typically has a sequence of Markov chains that one is 
interested in and one studies the limiting behavior of the corresponding sequence of mixing times.


\section{Recurrence/transience dichotomy}\label{sec:rec}

In this section, we prove Theorem~\ref{thm:rec} as well as a central limit theorem for the process
$\{X_t\}$.

\begin{proof}[{\bf Proof of Theorem~\ref{thm:rec}}]
Since $d$, $p$ and $\mu$ are fixed, we drop these superscripts.
We first prove this result when the bond configuration starts in state $\pi_p$; at the
end of the proof we extend this to a general initial bond configuration.

For this analysis, we let $\mathcal{F}_t$ be the $\sigma$-algebra 
generated by $\{M_s\}_{0\le s\le t}$ as well as, for each edge $e$,
the times before $t$ at which $e$ is refreshed and 
at which the random walker attempted to cross $e$.
We now define a sequence of sets $\{A_k\}_{k\ge 0}$. Let $A_0=\emptyset$. 
For $k\geq 1$, define $A_{k}$ to be the set of edges of $A_{k-1}$ that did not refresh during 
$[k-1,k]$ plus the set of edges that the walker attempted to cross during this interval of time
which did not refresh (during this time interval) after the last time 
(in this time interval) that the walker attempted to cross it. Note that
$A_k$ is measurable with respect to $\mathcal{F}_k$. Let $\tau_0=0$ and, for $k\geq 1$, define 
\[
\tau_k = \min\{i > \tau_{k-1} \colon A_i = \emptyset\}.
\]
We will see below that for all $k$, $\tau_k <\infty$ a.s.
Note that the random variables $\{\tau_k-\tau_{k-1}\}_{k\geq 1}$ are i.i.d.
For $k\geq 1$, let $U_k = X_{\tau_k}-X_{\tau_{k-1}}$. Clearly
the $\{U_k\}_{k\ge 1}$ are i.i.d.\ and hence $\{X_{\tau_n}\}_{n\ge 0}$ 
is a random walk on $\mathbb{Z}^d$ with step distribution $U_1$. It is 
easy to check that $U_1$ takes the value 0 as well as any of the $2d$ neighbors of 0 with 
positive probability and hence the random walk is fully supported, irreducible and
aperiodic. 

Let $J_i$ denote the number of attempted steps by the random walk during $[i-1,i]$
and $Z_k:=\sum_{i=\tau_{k-1}+1}^{\tau_{k}}J_i$ be the number of 
attempted steps by the random walk between $\tau_{k-1}$ and $\tau_k$. Now clearly $\{J_i\}_{i\ge 1}$
are i.i.d.\ as are $\{Z_k\}_{k\ge 1}$. A key step is to show that
\begin{equation}\label{eq:KeyExpTailForRecurrence}
\E{e^{cZ_1}}<\infty \mbox{ for some } c >0.
\end{equation}
(Note that this implies that each $\tau_k$ is finite a.s.)
Assuming (\ref{eq:KeyExpTailForRecurrence}) for the moment, we finish the proof of (i) when the bond 
configuration is started in stationarity. (\ref{eq:KeyExpTailForRecurrence})
implies, since $\dist(U_1,0)\le Z_1$, that $\dist(U_1,0)$ has an exponential tail and in particular 
a finite second moment. Since $U_1$ is obviously symmetric, it therefore
necessarily has mean zero. The fact that $\{X_{\tau_n}\}_{n\ge 0}$ is recurrent in one and two 
dimensions now follows from~\cite[Theorem~4.1.1]{LawlerLimic}. This proves (i) 
when the bond configuration is started in stationarity.

If $d\geq 3$, it follows from~\cite[Theorem~4.1.1]{LawlerLimic} that $\{X_{\tau_n}\}_{n\ge 0}$ 
is transient and so approaches $\infty$ a.s. To show (ii), we need to deal with times between
$\tau_{k}$ and $\tau_{k+1}$. Fix $M$, let $B_{\dist}(0,M)$ be the ball around 0 of $\dist$-radius $M$
and let $E_k$ be the event that the random walk returns to $B_{\dist}(0,M)$
during $[\tau_{k},\tau_{k+1}]$, we have
\[
\sum_{k=0}^\infty\pr{E_k} \le
\sum_{k=0}^\infty\pr{E_k\mid \dist(X_{\tau_k},0)\ge k^{\frac{1}{4d}}} +\pr{\dist(X_{\tau_k},0)
\le k^{\frac{1}{4d}}}.  
\]
Now $\pr{E_k\mid \dist(X_{\tau_k},0)\ge k^\frac{1}{4d}}\le \pr{Z_1\ge k^{\frac{1}{4d}}-M}$ 
and since $Z_1$ has an exponential
tail, the first terms are summable. Next, the local central limit theorem 
(cf.~\cite[Theorem~2.1.1]{LawlerLimic}) implies that for large $k$,
$\pr{\dist(X_{\tau_k},0)\le k^{\frac{1}{4d}}}\le \frac{Ck^{\frac{1}{4}}}{k^{\frac{d}{2}}}$ 
for some constant $C$. Since $d\ge 3$, it follows that the second terms are also summable. 
It now follows from Borel-Cantelli that the walker eventually leaves
$B_{\dist}(0,M)$ a.s.\ and hence by countable additivity (ii) holds 
when the bond configuration is started in stationarity.

We will now verify (\ref{eq:KeyExpTailForRecurrence}) by using Proposition \ref{prop:GoodProcessesDie}
with $Y_k=|A_k|+1$ and $\mathcal{F}_k$ being itself. Property (1) is immediate.
With $J_k$ as above and $R_k$ being the number of edges of $A_{k-1}$ that 
were refreshed during $[k-1,k]$, it is easy to see that
\[
|A_k| \leq |A_{k-1}| - R_k + J_k.
\]
This implies that
\[
      \Econd{|A_k|}{\mathcal{F}_{k-1}}
      \leq \Econd{|A_{k-1}| - R_k + J_k}{\mathcal{F}_{k-1}}
      = |A_{k-1}|e^{-\mu} + 1.
\]
This easily yields that there are positive numbers $a_0=a_0(\mu)$ and $b_0=b_0(\mu)<1$ so that
$\Econd{Y_k}{\mathcal{F}_{k-1}} \le b_0 Y_{k-1}$ on the event $Y_{k-1}>a_0$. 
This verifies property (2). Since properties (3) and (4) 
are easily checked, we can conclude from Proposition \ref{prop:GoodProcessesDie} at the end
of this section that 
$\tau_1$ has some positive exponential moment. An application of Lemma \ref{lem:NacuPeres05} to 
$\tau_1$ and the $J_i$'s allows us to conclude that
(\ref{eq:KeyExpTailForRecurrence}) holds. This completes the proof when the bond configuration
starts in stationarity.

We now analyze the situation starting from an arbitrary bond configuration.
Let $E_n$ be the event that some vertex whose $\dist$-distance to the 
origin is $n$ has an adjacent edge which does not refresh by time $\sqrt{n}$.
Let $H_n$ be the event that the number of attempted steps by the random walker 
by time $\sqrt{n}$ is larger than $n$. It is elementary to check that
\[
\sum_n  \Pruu{E_n}<\infty \mbox{ and } \sum_n  \Pruu{H_n} <\infty.
\]
By Borel-Cantelli, given $\epsilon>0$ there exists $n_0$ such that
\[
\Pruu{\cap_{n\ge n_0} E^c_n\cap \cap_{n\ge n_0} H^c_n}\ge 1-\epsilon.
\]
Now let $\eta$ be an arbitrary initial bond configuration and let $\eta^p$ be the random configuration
which is the same as $\eta$ at edges within distance $n_0$ of the origin and otherwise is chosen 
according to $\pi_p$. 

We claim that by what we have already proved, we can infer that
when the initial bond configuration is $\eta^p$, the random walker 
returns to 0 at arbitrarily large times a.s.\ if $d$ is 1 or 2 and converges to $\infty$ 
a.s.\ if $d\ge 3$. To see this, first observe that by Fubini's Theorem, we can infer from what
we have proved that for $\pi_p$-a.s.\ bond configuration, the random walker has the desired behavior
a.s. Therefore, since such a random bond configuration takes the same values as 
$\eta$ at edges within distance $n_0$ of the origin with positive probability,
it must be the case that a.s.\ $\eta^p$ is such that the random walk has the desired behavior.
Using Fubini's Theorem again demonstrates this claim.

One can next couple the random walker when the initial bond 
configuration is $\eta$ with the random walker when the initial bond configuration is $\eta^p$ in 
the obvious way. They will remain together provided $\cap_{n\ge n_0} E^c_n$ holds 
and $\cap_{n\ge n_0} H^c_n$ holds for the walks. Therefore the random walker with initial 
bond configuration $\eta$ has the claimed behavior
with probability $1-\epsilon$. As $\epsilon$ is arbitrary, we are done.
\end{proof}

We now provide a central limit theorem for the walker.

\begin{theorem}\label{thm:CLT}
Given $d,p$ and $\mu$, there exists $\sigma\in (0,\infty)$ so that
random walk in dynamical percolation on $\bfZ$ with parameters $p$ and $\mu$ 
started from an arbitrary configuration satisfies
\[
\left\{\frac{X_{kt}}{\sqrt{k}}\right\}_{t\in [0,1]} \Rightarrow \,\,\, \{B_t\}_{t\in [0,1]}
\]
in $C[0,1]$ as $k\to\infty$ where $\{B_t\}_{t\in [0,1]}$ is a standard $d$-dimensional 
Brownian motion with variance $\sigma^2$. Moreover
\[
\sigma^2=\frac{{\rm Var}(U^{(1)}_1)}{\E{\tau_1}}
\]
where $U_1$ and $\tau_1$ are given in the proof of Theorem \ref{thm:rec}
and $U_1^{(1)}$ is the first coordinate of $U_1$.  
\end{theorem}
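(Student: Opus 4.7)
The plan is to invoke Donsker's invariance principle on the embedded random walk $(X_{\tau_n})_{n\ge 0}$ from the proof of Theorem~\ref{thm:rec}, and then pass to continuous time via a random time change governed by renewal theory. I first treat the case that the bond configuration starts in stationarity $\pi_p$, and afterwards extend to an arbitrary initial configuration by the coupling developed at the end of that proof.

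From the proof of Theorem~\ref{thm:rec} the increments $U_k=X_{\tau_k}-X_{\tau_{k-1}}$ are i.i.d., mean zero (since $U_1$ is symmetric), and $\dist(U_1,0)$ has an exponential tail; likewise the inter-regeneration times $\tau_k-\tau_{k-1}$ are i.i.d.\ with exponential tail, and in particular finite mean $m:=\E{\tau_1}$. By the coordinate reflection symmetry of $U_1$ on $\mathbb{Z}^d$, its covariance matrix is $\mathrm{Var}(U_1^{(1)})\cdot I$, and since $U_1$ assumes both $0$ and each unit neighbor of $0$ with positive probability, $\mathrm{Var}(U_1^{(1)})\in(0,\infty)$. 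Donsker's invariance principle therefore yields, in $C[0,1]$,
\[
\left\{\frac{X_{\tau_{\lfloor ks\rfloor}}}{\sqrt k}\right\}_{s\in[0,1]} \Rightarrow \{W_s\}_{s\in[0,1]},
\]
where $W$ is a $d$-dimensional Brownian motion with variance $\mathrm{Var}(U_1^{(1)})$ per unit time.

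Now let $N(t):=\sup\{n:\tau_n\le t\}$. The SLLN gives $\tau_n/n\to m$ a.s., hence $N(kt)/k\to t/m$ uniformly in $t\in[0,1]$ a.s. Combining this uniform renewal limit with the previous display via a continuous-mapping (or Skorokhod representation) argument produces
\[
\left\{\frac{X_{\tau_{N(kt)}}}{\sqrt k}\right\}_{t\in[0,1]} \Rightarrow \{W_{t/m}\}_{t\in[0,1]},
\]
which is a Brownian motion of variance $\sigma^2=\mathrm{Var}(U_1^{(1)})/m$ per unit time, matching the theorem's formula. What remains is to absorb the error $\sup_{t\in[0,1]}\dist(X_{kt},X_{\tau_{N(kt)}})/\sqrt k$. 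This difference is bounded by $Z_{N(kt)+1}$, the number of walker attempts in the current unfinished excursion, and by (\ref{eq:KeyExpTailForRecurrence}), $Z_1$ has an exponential moment. A union bound then gives $\max_{n\le Ck}Z_n=O(\log k)$ a.s.\ for any fixed $C$, which is $o(\sqrt k)$ uniformly.

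For an arbitrary initial bond configuration $\eta$, the coupling at the end of the proof of Theorem~\ref{thm:rec} constructs, for any $\epsilon>0$, a joint law of two processes---one started from $\eta$, one whose environment matches $\eta^p$ (hence agrees with $\pi_p$ outside a fixed ball)---which coincide for all time on an event of probability at least $1-\epsilon$. This implies that the two rescaled path laws are within $\epsilon$ in total variation, so the weak limit obtained in the stationary case (translated to $\eta^p$ by a Fubini argument) transfers to $\eta$. The main technical obstacle is the uniform-in-$t$ control of the excursion error in the $C[0,1]$ topology, which requires a maximal inequality on $\max_{n\le Ck}Z_n$ obtained from the exponential tail of $Z_1$; everything else reduces to the standard Donsker invariance principle combined with renewal theory.
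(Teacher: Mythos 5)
Your argument is correct and follows essentially the same route as the paper's own (admittedly sketched, $t=1$ only) proof: a Donsker/CLT for the embedded regeneration walk $X_{\tau_n}$, a renewal-type time change using $\E{\tau_1}<\infty$, control of the within-excursion error via the exponential tail of $Z_1$, and the coupling from the proof of Theorem \ref{thm:rec} to handle arbitrary initial configurations. The one step to phrase more carefully is the parenthetical ``Fubini argument'' transferring the limit from $\pi_p$ to $\eta^p$: since $\mathcal{L}(\eta^p)$ is $\pi_p$ conditioned on a positive-probability cylinder event, and conditioning does not in general preserve weak convergence, one should instead rerun the regeneration argument starting from the first regeneration time after every edge in $B_{\dist}(0,n_0)$ has refreshed, from which point the blocks are genuinely i.i.d.\ with the stationary block law.
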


\begin{proof}
This type of argument is very standard and so we only sketch the proof. We therefore only prove 
the convergence for the fixed value $t=1$ and we also assume that the bond configuration is started 
in stationarity with distribution $\pi_p$ and that the walker starts at the origin. To deal
with general initial bond configurations, the methods in the proof of Theorem \ref{thm:rec}
can easily be adapted.

Now, symmetry considerations give that $\E{U_1^{(1)}}=0$ and that the different coordinates are 
uncorrelated.
We have also seen that $U_1^{(1)}$ has a finite second moment. The central limit theorem now tells us that
\begin{equation}\label{eq:CLT}
\frac{X_{\tau_n}}{\sqrt{n}{\rm Var}(U^{(1)}_1)^{1/2}}=
\frac{\sum_{1}^n U_i}{\sqrt{n}{\rm Var}(U^{(1)}_1)^{1/2}} \Rightarrow \calN
\end{equation}
where $\calN$ is a standard $d$-dimensional Gaussian.

We need to show that 
$\frac{X_{k}}{\sqrt{k}}$ converges to the appropriate Gaussian.
Let $n(k):=\lceil k/\E{\tau_1}\rceil$ and write $\frac{X_{k}}{\sqrt{k}}$ as
\[
\frac{\sqrt{n(k)}}{\sqrt{k}} 
\left[
\frac{X_k-X_{n(k)\E{\tau_1}}}{\sqrt{n(k)}} +
\frac{X_{n(k)\E{\tau_1}}-X_{\tau_{n(k)}}}{\sqrt{n(k)}} +
\frac{X_{\tau_{n(k)}}}{\sqrt{n(k)}}
\right]\, .
\]
The first fraction converges to $1/\sqrt{\E{\tau_1}}$. The first fraction in the second factor
is easily shown to converge in probability to 0. The weak law of large numbers gives that
$\tau_{n(k)}/n(k)$ converges in probability to $\E{\tau_1}$ which easily leads to the 
second fraction in the second factor converging in probability to 0. Finally, using (\ref{eq:CLT})
for the last fraction, we obtain the result.
\end{proof}

\noindent
{\bf Technical lemma.}\\
We now present the technical lemma which was used in the previous proof and will be used
again later on.
This result is presumably well known in some form but we provide the proof nevertheless
for completeness. As the result is ``obvious'', the reader might choose to skip the proof.

\begin{proposition}\label{prop:GoodProcessesDie}
For all $\alpha\ge 1$, $\delta<1$, $\epsilon>0$ and $\gamma$, there exist 
$c_{\ref{prop:GoodProcessesDie}}=c_{\ref{prop:GoodProcessesDie}}(\alpha,\delta,\epsilon,\gamma)
>0$ 
and 
$C_{\ref{prop:GoodProcessesDie}}=C_{\ref{prop:GoodProcessesDie}}(\alpha,\delta,\epsilon,\gamma)<\infty$ 
with the following
property. If $\{Y_i\}_{i\ge 0}$ is a discrete time process taking 
values in $\{1,2,\ldots\}$ adapted to a filtration  $\{\mathcal{F}_i\}_{i\ge 0}$ satisfying \\
(1) $Y_0=1$, \\
(2) for all $i$, $\E{Y_{i+1}\mid \mathcal{F}_i}\le \delta\, Y_i$ on $Y_i > \alpha$, \\
(3) for all $i$, $\Pruu{Y_{i+1}=1\mid \mathcal{F}_i}\ge \epsilon$ on $Y_i\le \alpha$ and \\
(4) for all $i$, $\E{Y_{i+1}\mid \mathcal{F}_i}\le \gamma$ on $Y_i\le \alpha$, \\
and if $T:=\min\{i\ge 1: Y_i=1\}$, then
\[
\E{e^{c_{\ref{prop:GoodProcessesDie}}T}} \le C_{\ref{prop:GoodProcessesDie}}
\]
and so in particular by Jensen's inequality
\[
\E{T} \le \frac{\log C_{\ref{prop:GoodProcessesDie}}}{c_{\ref{prop:GoodProcessesDie}}}.
\]
\end{proposition}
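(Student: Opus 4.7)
The plan is to prove an exponential moment bound for $T$ by decomposing the trajectory into visits to the low region $L := \{y \in \mathbb{Z}_+ : y \le \alpha\}$ and excursions into the high region $H := \{y : y > \alpha\}$, and to show that each excursion contributes a uniformly bounded MGF while the number of $L$-visits before hitting $1$ has geometric tails.

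First I would prove a single-excursion tail bound: starting from $Y_0 = y$ with $y > \alpha$ and letting $\sigma := \inf\{i \ge 0 : Y_i \le \alpha\}$, the process $M_i := Y_i\delta^{-i}$ is, by property (2), a nonnegative supermartingale up to $\sigma$, with $\E{M_0} = y$. On $\{\sigma > k\}$ one has $Y_k > \alpha$, so Markov's inequality gives $\Pruu{\sigma > k \mid Y_0 = y} \le y\delta^k/\alpha$. Summing a geometric series, for any $c$ small enough that $\delta e^c < 1$ there is a finite constant $C_0 = C_0(c,\delta,\alpha)$ with $\E{e^{c\sigma} \mid Y_0 = y} \le 1 + C_0 y$; the bound trivially extends to $y \le \alpha$ (where $\sigma = 0$).

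Next I would set up the iteration. Put $\tau_0 := 0$ (so $Y_{\tau_0} = 1 \in L$) and, while $Y_{\tau_j + 1} \neq 1$, define $\tau_{j+1} := \inf\{i > \tau_j : Y_i \le \alpha\}$. Set $N := \min\{j \ge 0 : Y_{\tau_j + 1} = 1\}$, so that $T = \tau_N + 1$. By property (3) and the strong Markov property, $\Pruu{N \ge k} \le (1-\epsilon)^k$. Writing $L_j := \tau_j - \tau_{j-1} = 1 + \sigma_j$, where $\sigma_j$ is the hitting time of $L$ for the shifted process started at $Y_{\tau_{j-1}+1}$, the first step gives $\E{e^{c\sigma_j} \mid \mathcal{F}_{\tau_{j-1}+1}} \le 1 + C_0 Y_{\tau_{j-1}+1}$; taking a further expectation with property (4) yields
\[
\E{e^{cL_j} \mid \mathcal{F}_{\tau_{j-1}}} \le e^c(1 + C_0\gamma)
\]
on $\{N \ge j-1\}$, uniformly in $j$ and in the history.

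The final step is a standard compound-geometric MGF calculation: iterated application of the tower property gives
\[
\E{e^{cT}} \le e^c \sum_{k \ge 0} (1-\epsilon)^k \bigl[e^c(1+C_0\gamma)\bigr]^k,
\]
which is finite once $c$ is chosen small enough that $(1-\epsilon)e^c(1+C_0\gamma) < 1$; setting $C_{\ref{prop:GoodProcessesDie}}$ equal to the resulting finite sum and $c_{\ref{prop:GoodProcessesDie}} := c$ completes the proof, with the expectation bound on $T$ following from Jensen as stated. The main obstacle is the third step's bookkeeping: the per-excursion MGF bound from step one depends linearly on the random entry value $Y_{\tau_{j-1}+1}$, and property (4) is exactly what is needed to turn this linear-in-$y$ bound into a constant uniform in the history after one layer of conditioning.
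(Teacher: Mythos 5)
Your decomposition is essentially the paper's: successive returns to $\{Y\le\alpha\}$, a geometrically dominated number of such returns via property (3), a supermartingale argument from property (2) to control each excursion into $\{Y>\alpha\}$, and property (4) to convert the resulting bound, which is linear in the random entry value $Y_{\tau_{j-1}+1}$, into a uniform constant after one more layer of conditioning. Steps one and two are correct (modulo the harmless slip that $T\le\tau_N+1$ rather than $T=\tau_N+1$, since $Y$ may jump from above $\alpha$ directly to $1$; the paper records the same caveat for its own decomposition, and an inequality is all you need).

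The gap is in the final step. The asserted bound $\E{e^{cT}}\le e^c\sum_{k\ge0}(1-\epsilon)^k\bigl[e^c(1+C_0\gamma)\bigr]^k$ does not follow from ``iterated application of the tower property'': writing $S_k:=\sum_{j=1}^kL_j$, you would need $\E{\ind{N\ge k}\,e^{cS_k}}\le(1-\epsilon)^k\bigl[e^c(1+C_0\gamma)\bigr]^k$, but $\ind{N\ge k}$ and $S_k$ are dependent, and one cannot multiply a probability bound for one by a conditional-MGF bound for the other. Concretely, $\{N\ge k\}$ depends on $Y_{\tau_{k-1}+1}$ and so is not $\mathcal{F}_{\tau_{k-1}}$-measurable, whereas $\mathcal{F}_{\tau_{k-1}}$ is exactly the $\sigma$-field on which your uniform bound $\E{e^{cL_k}\mid\mathcal{F}_{\tau_{k-1}}}\le e^c(1+C_0\gamma)$ lives; peeling off the last excursion therefore forces you to estimate $\E{\ind{Y_{\tau_{k-1}+1}\ne1}\,e^{cL_k}\mid\mathcal{F}_{\tau_{k-1}}}$ in one stroke, and the tower property only yields the additive bound
\[
\E{\ind{Y_{\tau_{k-1}+1}\ne1}\,e^{cL_k}\mid\mathcal{F}_{\tau_{k-1}}}\;\le\; e^c\bigl[(1-\epsilon)+C_0\gamma\bigr],
\]
not the product $(1-\epsilon)\cdot e^c(1+C_0\gamma)$ (a two-point distribution for $Y_{\tau_{k-1}+1}$ with the mass of $e^{cL_k}$ concentrated on $\{Y_{\tau_{k-1}+1}\ne1\}$ defeats the product bound). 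This is precisely the difficulty the paper isolates into Lemma~\ref{lem:NacuPeres05}, which it proves not by conditioning but by the tail split $\pr{\sum_{i\le M}X_i\ge kn}\le\pr{M\ge n}+\pr{\sum_{i\le n}X_i\ge kn}$. Your argument is repairable in at least three ways: invoke that splitting; interleave Cauchy--Schwarz at each peel-off, paying $(1-\epsilon)^{1/2}$ and an MGF at $2c$ per step; or, closest to your own setup, observe that summing the geometric tail via $\E{e^{c\sigma}}=1+(e^c-1)\sum_{k\ge0}e^{ck}\pr{\sigma>k}$ gives $C_0=C_0(c)=\frac{e^c-1}{\alpha(1-\delta e^c)}\to0$ as $c\downarrow0$, so that $e^c\bigl[(1-\epsilon)+C_0(c)\gamma\bigr]<1$ for $c$ small and the iteration closes with the additive per-step factor. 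As written, however, the final inequality is unjustified and the stated geometric series is not what the iteration produces.
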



To prove this, we will need a slight strengthening of a lemma from \cite{NacuPeres05} which essentially
follows the same proof.

\begin{lemma}\label{lem:NacuPeres05}
Given positive numbers $\lambda$ and $a$, there exist 
$c_{\ref{lem:NacuPeres05}}=c_{\ref{lem:NacuPeres05}}(\lambda,a)>0$ and
$C_{\ref{lem:NacuPeres05}}=C_{\ref{lem:NacuPeres05}}(\lambda,a)<\infty$ so that if
$\{X_i\}$ are nonnegative random variables adapted to a filtration  $\{\mathcal{G}_i\}_{i\ge 0}$ 
satisfying 
\begin{equation}\label{eq:LInfinityAssumption}
\|\E{e^{\lambda X_{i+1}}\mid \mathcal{G}_{i}}\|_{\infty} \le a\,\, \mbox{for all $i\ge 0$}, 
\end{equation}
and $M$ is a nonnegative integer valued random variable satisfying
\begin{equation}\label{eq:OtherAssumption}
\E{e^{\lambda M}}\le a,
\end{equation}
then
\begin{equation}\label{eq:DesiredInequality}
\E{e^{c_{\ref{lem:NacuPeres05}}\sum_{i=1}^M X_i}}\le C_{\ref{lem:NacuPeres05}}.
\end{equation}
(Note that this implies, by Jensen's inequality, that
$\E{\sum_{i=1}^M X_i}\le \frac{\log C_{\ref{lem:NacuPeres05}}}{c_{\ref{lem:NacuPeres05}}}$.)
\end{lemma}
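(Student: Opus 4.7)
The plan is to combine the exponential tail of $M$ coming from (\ref{eq:OtherAssumption}) with an iterated conditional moment generating function bound on the partial sums $S_m := \sum_{i=1}^m X_i$, and then use Cauchy--Schwarz to decouple $M$ from the $X_i$'s, since $M$ is not assumed to be independent of (or measurable with respect to) the filtration $\{\mathcal{G}_i\}$.

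First, Markov's inequality applied to (\ref{eq:OtherAssumption}) gives the tail bound $\pr{M\ge m}\le a e^{-\lambda m}$ for every nonnegative integer $m$. Next, for any $c'\in(0,\lambda]$, since $X_{i+1}\ge 0$ the map $x\mapsto x^{c'/\lambda}$ is concave on $[1,\infty)$, so Jensen's inequality applied conditionally on $\mathcal{G}_i$ yields
\[
\Econd{e^{c' X_{i+1}}}{\mathcal{G}_i} = \Econd{\bigl(e^{\lambda X_{i+1}}\bigr)^{c'/\lambda}}{\mathcal{G}_i} \le \Bigl(\Econd{e^{\lambda X_{i+1}}}{\mathcal{G}_i}\Bigr)^{c'/\lambda} \le a^{c'/\lambda}.
\]
Iterating using the tower property then gives $\E{e^{c' S_m}}\le a^{c'm/\lambda}$ for every $m\ge 0$.

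Now decompose according to the value of $M$ and apply Cauchy--Schwarz:
\[
\E{e^{c S_M}} = \sum_{m=0}^\infty \E{\ind{M=m}\, e^{c S_m}} \le \sum_{m=0}^\infty \sqrt{\pr{M=m}}\,\sqrt{\E{e^{2c S_m}}}.
\]
Taking $c'=2c\le\lambda$ in the conditional MGF bound and combining with the tail estimate on $M$, the summand is bounded by
\[
\sqrt{a e^{-\lambda m}}\cdot a^{c m/\lambda} = \sqrt{a}\,\exp\!\left(-\tfrac{\lambda m}{2} + \tfrac{c m \log a}{\lambda}\right).
\]
Choosing $c_{\ref{lem:NacuPeres05}}$ small enough (e.g. $c_{\ref{lem:NacuPeres05}} \le \min\{\lambda/2,\ \lambda^2/(4(|\log a|+1))\}$) makes the exponent at most $-\lambda m/4$, so the series converges geometrically and yields (\ref{eq:DesiredInequality}) with $C_{\ref{lem:NacuPeres05}}$ depending only on $\lambda$ and $a$.

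There is no genuinely hard step: the only mildly subtle point is the lack of any independence assumption between $M$ and the $X_i$'s, which forces the Cauchy--Schwarz step (any conjugate-exponent Hölder split would work equally well). The rest is a routine combination of Jensen, tower property, and a Chernoff-type tail bound; the choice of $c_{\ref{lem:NacuPeres05}}$ is dictated by requiring the resulting geometric series to converge.
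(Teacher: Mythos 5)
Your proof is correct. The overall skeleton is the same as the paper's (Markov's inequality on $M$ to get $\pr{M\ge m}\le ae^{-\lambda m}$, plus the iterated conditional bound $\E{e^{\lambda\sum_{i=1}^n X_i}}\le a^n$ obtained from the tower property and adaptedness), but the decoupling device differs. The paper exploits nonnegativity of the $X_i$ through the inclusion $\{\sum_{i=1}^M X_i\ge kn\}\subseteq\{M\ge n\}\cup\{\sum_{i=1}^n X_i\ge kn\}$, deduces a geometric tail $\pr{\sum_{i=1}^M X_i\ge kn}\le Bb^n$, and then converts that tail back into an exponential moment. You instead decompose $\E{e^{cS_M}}$ over the events $\{M=m\}$ and apply Cauchy--Schwarz termwise, paying a factor of $2$ in the exponent (hence the requirement $2c\le\lambda$) in exchange for bounding the moment directly with no tail-to-moment conversion at the end. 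Both arguments use only nonnegativity and adaptedness, and both yield constants depending only on $\lambda$ and $a$; your conditional-Jensen step $\Econd{e^{c'X_{i+1}}}{\mathcal{G}_i}\le a^{c'/\lambda}$ is a harmless refinement of the paper's bound at $c'=\lambda$ (and is legitimate since $a\ge1$ is forced by $e^{\lambda X_{i+1}}\ge1$, so the sign of $\log a$ causes no trouble in your final choice of $c_{\ref{lem:NacuPeres05}}$). Each approach generalizes slightly differently: yours does not need the $X_i$ to be nonnegative in the union-bound step (only in the Jensen step, and even that can be dropped), while the paper's avoids halving the exponent.
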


\begin{proof}
Choose $k=k(\lambda,a)$ sufficiently large so that $a< e^{\lambda k}$.
We claim there exist $b<1$ and $B$, depending only on $\lambda$ and $a$,
such that for all $n$
\begin{equation}\label{eq:BoundedOnSum}
\Pruu{\sum_{i=1}^M X_i\ge kn}\le B b^n.
\end{equation}
To see this, note that this above probability is at most
\[
\Pruu{M \ge n}+\Pruu{\sum_{i=1}^n X_i\ge kn}
\le a e^{-\lambda n}+\frac{\E{e^{\lambda\sum_{i=1}^n X_i}}}{e^{\lambda k n}}
\]
by Markov's inequality and (\ref{eq:OtherAssumption}).
Using (\ref{eq:LInfinityAssumption}), taking conditional expectations and iterating, one sees that
\[
\E{e^{\lambda\sum_{i=1}^n X_i}}\le a^n
\]
and hence the second term is at most $(\frac{a}{e^{\lambda k}})^n$. We can conclude that there are $b$ 
and $B$, depending only on $\lambda$ and $a$, such that
(\ref{eq:BoundedOnSum}) holds. Since $b,B$ and $k$ all depend only on $\lambda$ and $a$,
it then easily follows from (\ref{eq:BoundedOnSum}) that there are $c_{\ref{lem:NacuPeres05}}$ 
and $C_{\ref{lem:NacuPeres05}}$ depending only on $\lambda$ and $a$ so that
(\ref{eq:DesiredInequality}) holds.
\end{proof}

\begin{proof}[{\bf Proof of Proposition \ref{prop:GoodProcessesDie}}]
Let $U_0=0$ and for $k\ge 1$, let
$U_{k}:=\min\{i\ge U_{k-1}+1: Y_i\le \alpha\}$. Let 
$M:=\min\{k\ge 0: Y_{U_k+1}=1\}$. Property (3) implies that $M$ is stochastically dominated by a 
geometric random variable with parameter $\epsilon$. Next, clearly
\begin{equation}\label{eq:TBoundedBySum}
T\le 1+\sum_{k=1}^M (U_{k}-U_{k-1}).
\end{equation}
(Equality does not necessarily hold since it is possible that $Y_{T-1}>\alpha$ in which case 
$T-1$ does not correspond to any $U_k$.)
We claim that for all $k\ge 1$
\begin{equation}\label{eq:LinfinityBound}
\|\E{\left(\frac{1}{\delta}\right)^{U_{k}-U_{k-1}}\mid \mathcal{F}_{U_{k-1}}}\|_{\infty} \le 
\frac{\gamma}{\delta}.
\end{equation}
We write 
\[
\E{\left(\frac{1}{\delta}\right)^{U_{k}-U_{k-1}}\mid \mathcal{F}_{U_{k-1}}}=
\E{\E{\left(\frac{1}{\delta}\right)^{U_{k}-U_{k-1}}\mid \mathcal{F}_{U_{k-1}+1}}\mid \mathcal{F}_{U_{k-1}}}.
\]
Concerning the inner conditional expectation, we claim that 
\begin{equation}\label{eq:InnerExpectation}
\E{\left(\frac{1}{\delta}\right)^{U_{k}-U_{k-1}}\mid \mathcal{F}_{U_{k-1}+1}}\le
\frac{Y_{U_{k-1}+1}}{\delta}.
\end{equation}

Case 1: $Y_{U_{k-1}+1}\le \alpha$.  \\
In this case, $U_{k}=U_{k-1}+1$ and so 
\[
\E{\left(\frac{1}{\delta}\right)^{U_{k}-U_{k-1}}\mid \mathcal{F}_{U_{k-1}+1}}=\frac{1}{\delta}.
\]

Case 2: $Y_{U_{k-1}+1}> \alpha$.  \\
In this case, we first make the important observation that property (2)
implies that on the event $Y_{U_{k-1}+1}> \alpha$,
\[
M_j:=\left(\frac{1}{\delta}\right)^{j\wedge (U_k-U_{k-1}-1)}
Y_{(j\wedge (U_k-U_{k-1}-1))+U_{k-1}+1}, \,\,\, j\ge 0
\]
is a supermartingale with respect to $\{\mathcal{F}_{j+U_{k-1}+1}\}_{j\ge 0}$.
From the theory of nonnegative supermartingales (see \cite{Durrett}, Chapter 5), we can let $j\to\infty$
in the defining inequality of a supermartingale and conclude that 
\[
\E{\left(\frac{1}{\delta}\right)^{U_k-U_{k-1}-1} Y_{U_k}
\mid \mathcal{F}_{U_{k-1}+1}}\le Y_{U_{k-1}+1},
\]
It follows that
\begin{equation}\label{eq:StoppingTimeInequality}
\E{\left(\frac{1}{\delta}\right)^{U_k-U_{k-1}}\mid \mathcal{F}_{U_{k-1}+1}}
\le \frac{Y_{U_{k-1}+1}}{\delta}.
\end{equation}
Since the $Y_i$'s are at least 1, this establishes (\ref{eq:InnerExpectation}).

Taking the conditional expectation of the two sides of (\ref{eq:InnerExpectation})
with respect to $\mathcal{F}_{U_{k-1}}$ and using property (4) finally 
yields (\ref{eq:LinfinityBound}). 

Lastly, Lemma \ref{lem:NacuPeres05} (with $X_i=U_i- U_{i-1}$, 
$\mathcal{G}_i=\mathcal{F}_{U_i}$, $M=M$, $\lambda=\min\{\frac{\epsilon}{2},\log(\frac{1}{\delta})\}$
and $a=\max\{\frac{\gamma}{\delta},2\}$) together with
(\ref{eq:TBoundedBySum}), (\ref{eq:LinfinityBound}) and the fact that $M$ is dominated by a geometric
random variable with parameter $\epsilon$ gives us the desired conclusion.
\end{proof}

\begin{remark}{\rm 
We see that $c_{\ref{prop:GoodProcessesDie}}$ and $C_{\ref{prop:GoodProcessesDie}}$ actually
depend only on $\delta,\epsilon$ and $\gamma$ but not on $\alpha$.
}\end{remark}


\section{Proofs of the mixing time lower bound in the subcritical case}
\label{sec:LowerBoundSub}

In this section, we prove Theorem~\ref{thm:subMeanSquaredDisplacement},
Theorem \ref{thm:sub}(ii) and Corollary~\ref{cor:subMeanSquaredDisplacement}.
We begin with the proof of Theorem~\ref{thm:subMeanSquaredDisplacement}
as this will be used in the proof of Theorem \ref{thm:sub}(ii).

\begin{proof}[{\bf Proof of Theorem~\ref{thm:subMeanSquaredDisplacement}}]
Fix $d$ and $p\in (0,p_\critical(\mathbb{Z}^{d}))$.
Choose $\beta=\beta(d,p)$ sufficiently small
so that for all $n$ and $\mu$, the probability that, for $\{\eta_t\}_{t\ge 0}$, 
a fixed edge $e$ is open at some point in 
$[0,\beta/\mu]$ is less than $p_\critical(\mathbb{Z}^{d})$. By time scaling, the latter probability
is independent of $\mu$ (and of course of $n$).

Let $g_n:\ndtorus\rightarrow \mathbb{R}^{2d}$ be given by
\begin{equation}\label{eq:Lipschitz}
g_n(x_1,\ldots,x_d):=(n\cos(2\pi x_1/n),n\sin(2\pi x_1/n),\ldots,n\cos(2\pi x_d/n),n\sin(2\pi x_d/n)).
\end{equation}

Observe that for fixed $d$, the functions $\{g_n\}_{n\ge 1}$ are uniformly bi--Lipschitz when 
$\ndtorus$ is equipped with the metric $\dist$ and $\mathbb{R}^{2d}$ has its usual metric.
Let $C_{\mathrm{Lip}}=C_{\mathrm{Lip}}(d)$ be a uniform bound on the bi--Lipschitz constants.

We need the following two lemmas. The first lemma will be proved afterwards while the second 
lemma, which is implicitly contained in \cite{Ball92}, is stated explicitly in \cite{NPSS06}; 
in fact a strengthening of it yielding a maximal version is proved in \cite{NPSS06}.

\begin{lemma}\label{lem:sub.1StepBound}
There exists $C_{\ref{lem:sub.1StepBound}}=C_{\ref{lem:sub.1StepBound}}(d,p)$ so that 
for all $n$, for all $\mu$ and for all $s\le\beta$, 
\[
\E{\dist(X_{\frac{s}{\mu}},X_0)^2}\le C_{\ref{lem:sub.1StepBound}}
\]
when we start the full system in stationarity.
\end{lemma}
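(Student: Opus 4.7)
The plan is to dominate the dynamical configuration throughout $[0,s/\mu]$ by a single \emph{static} subcritical Bernoulli percolation that confines the walker, and then bound the second moment of the radius of that confining component.

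Let $\tilde\eta$ be the random set of edges of $\ndtorus$ that are open in $\eta_t$ for at least one $t\in[0,s/\mu]$. Because different edges evolve independently, $\tilde\eta$ is a Bernoulli product measure, and a direct computation using stationarity yields
\[
q := \Pruu{e\in\tilde\eta} = 1-(1-p)e^{-ps},
\]
which depends only on $s$ and $p$. By the choice of $\beta$ and monotonicity in $s$, $q \le q(\beta,p) < p_\critical(\bfZ)$ for all $s\le\beta$. Since the walker traverses only edges in $\tilde\eta$, during $[0,s/\mu]$ it is confined to the $\tilde\eta$-connected component $\calC(X_0)$ of $X_0$, and hence
\[
\dist(X_0,X_{s/\mu}) \le R := \max_{y\in\calC(X_0)} \dist(X_0,y).
\]
As $X_0$ is independent of $\tilde\eta$ and $\tilde\eta$ is translation invariant, we may fix $X_0 = 0$, reducing the task to showing $\E{R^2}\le C(d,p)$ uniformly in $n$ for subcritical Bernoulli($q$) percolation on $\ndtorus$.

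The main technical obstacle is the transfer of standard subcritical tail estimates from $\bfZ$ to the torus. We resolve this by observing that for $k<n$ the self-avoiding walks of length $k$ from $0$ on $\ndtorus$ are in canonical bijection with the corresponding walks on $\bfZ$ via lifting, and the edges along any such walk must lie in distinct $(n\mathbb{Z})^d$-orbits (two edges in the same orbit would force two walk vertices to differ by a nonzero element of $(n\mathbb{Z})^d$, which requires length $\ge n$). Consequently, the joint law of the indicators ``this length-$k$ SAW from $0$ is open'' under Bernoulli($q$) on $\ndtorus$ is identical to that under Bernoulli($q$) on $\bfZ$. Since $\{R\ge k\}$ implies the existence of an open self-avoiding walk of length $\ge k$ emanating from $0$, for $k<n$ we obtain
\[
\Pruu{R\ge k} \le \Pruu{\text{cluster of } 0 \text{ has radius} \ge k \text{ in Bernoulli}(q) \text{ on } \bfZ} \le C_1 e^{-c_1 k},
\]
the last inequality by the Menshikov / Aizenman--Barsky exponential decay of connectivity in subcritical Bernoulli percolation, with $C_1,c_1$ depending only on $d$ and $p$.

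To conclude, split $\E{R^2}=\sum_{k\ge 1}(2k-1)\Pruu{R\ge k}$ at $k=n$. The contribution from $k<n$ is dominated by $\sum_{k\ge 1}(2k-1)C_1 e^{-c_1 k}$, a finite constant depending only on $d$ and $p$. For $k\ge n$ we use the trivial torus-diameter bound $R\le dn/2$ together with $\Pruu{R\ge n}\le C_1 e^{-c_1 n}$, yielding a tail of at most $(dn/2)^2\cdot C_1 e^{-c_1 n}$, which is $O(1)$ uniformly in $n$; for $n$ in any bounded range the bound $R\le dn/2$ is itself a constant. This produces the required uniform bound $\E{R^2}\le C_{\ref{lem:sub.1StepBound}}(d,p)$.
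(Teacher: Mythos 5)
Your proposal is correct and follows essentially the same route as the paper: dominate the walker's trajectory on $[0,s/\mu]$ by the static Bernoulli configuration of edges that are open at some point in that interval, note that by the choice of $\beta$ its density stays strictly below $p_\critical(\bfZ)$, and bound $\E{\dist(X_{s/\mu},X_0)^2}$ by the second moment of the radius of the confining subcritical cluster, which has an exponential tail (the paper's Theorem \ref{thm:exp.radius}, used there with the diameter rather than the radius). The only substantive difference is that you spell out the transfer of the exponential decay from $\bfZ$ to $\ndtorus$ by lifting self-avoiding walks, where the paper simply asserts the transfer; the one imprecision is that the claimed identity of the \emph{joint} law of the SAW indicators needs $k\lesssim n/2$ rather than $k<n$ (distinct $\bfZ$-edges in the ball of radius $k$ can share a $(n\mathbb{Z})^d$-orbit once $2k\gtrsim n$), but your tail-splitting step absorbs this harmlessly by cutting at $n/2$ instead of $n$.
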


\begin{lemma}\label{lem:Ball}
Let $\{Y_i\}_{i\in \mathbb{Z}}$ be a discrete time stationary reversible Markov chain with finite state space $S$ and let 
$h:S\rightarrow \mathbb{R}^m$. Then for each $k\ge 1$,
\[
\E{\|h(Y_k)-h(Y_0)\|_{L^2}^2}\le k\E{\|h(Y_1)-h(Y_0)\|_{L^2}^2}
\]
where 
$\|\,\|_{L^2}$ denotes the Euclidean norm on $\mathbb{R}^m$.
\end{lemma}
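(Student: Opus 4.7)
My approach is spectral. Since $\|h(Y_k)-h(Y_0)\|_{L^2}^2 = \sum_{j=1}^m (h_j(Y_k)-h_j(Y_0))^2$, it suffices by linearity to prove the lemma for a single scalar function $h\colon S\to\mathbb{R}$. Let $\pi$ denote the stationary distribution and let $P$ denote the one-step transition operator $(Pf)(x)=\sum_y P(x,y)f(y)$, viewed on $L^2(\pi)$ with the weighted inner product $\langle f,g\rangle_\pi=\sum_x \pi(x)f(x)g(x)$. Reversibility is exactly the statement that $P$ is self-adjoint with respect to $\langle\cdot,\cdot\rangle_\pi$, so its spectrum is real and lies in $[-1,1]$. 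Using stationarity, I would rewrite
\[
\E{(h(Y_k)-h(Y_0))^2} = 2\langle h,h\rangle_\pi - 2\langle h,P^kh\rangle_\pi = 2\langle h,(I-P^k)h\rangle_\pi,
\]
and similarly $\E{(h(Y_1)-h(Y_0))^2} = 2\langle h,(I-P)h\rangle_\pi$.

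Next I would diagonalize $P$ in an orthonormal basis $\{\phi_i\}$ of $L^2(\pi)$ consisting of eigenvectors with eigenvalues $\lambda_i\in[-1,1]$, and expand $h=\sum_i c_i\phi_i$. Then
\[
\langle h,(I-P^k)h\rangle_\pi = \sum_i c_i^2(1-\lambda_i^k),\qquad \langle h,(I-P)h\rangle_\pi=\sum_i c_i^2(1-\lambda_i),
\]
so the desired inequality reduces, coefficient by coefficient, to the scalar claim that $1-\lambda^k \le k(1-\lambda)$ for all $\lambda\in[-1,1]$ and all $k\ge 1$. For $\lambda\in[0,1]$ this follows from the factorization $1-\lambda^k=(1-\lambda)(1+\lambda+\cdots+\lambda^{k-1})$ together with $1+\lambda+\cdots+\lambda^{k-1}\le k$; for $\lambda\in[-1,0]$ and $k\ge 2$ one just uses $1-\lambda^k\le 2\le k\le k(1-\lambda)$; the case $k=1$ is trivial. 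Summing in $i$ then yields the lemma.

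There is no real obstacle: the result is a routine spectral calculation on a finite-dimensional inner-product space once reversibility is invoked to identify the cross term $\E{h(Y_0)h(Y_k)}$ with $\langle h,P^kh\rangle_\pi$. The two small points to handle carefully are the reduction from the vector-valued $h$ to the scalar case (straightforward by summing over coordinates) and the verification of the scalar inequality at negative eigenvalues, which is where one loses the sharper coefficient $1+\lambda+\cdots+\lambda^{k-1}$ and accepts the uniform bound $k$.
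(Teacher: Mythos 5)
Your proof is correct, and it is the standard spectral argument for Ball's ``Markov type $2$'' inequality in Hilbert space: the paper does not prove Lemma \ref{lem:Ball} itself but cites \cite{Ball92} and \cite{NPSS06}, where essentially this computation appears. The reduction to coordinates, the identity $\E{(h(Y_k)-h(Y_0))^2}=2\langle h,(I-P^k)h\rangle_\pi$, and the scalar bound $1-\lambda^k\le k(1-\lambda)$ on $[-1,1]$ (including the case of negative eigenvalues) are all handled correctly.
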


We may assume that $\beta \le 1$. For $t\le \beta (\le 1)$, the LHS of
(\ref{eq:subMeanSquaredDisplacement}) is by Lemma \ref{lem:sub.1StepBound} at most 
$C_{\ref{lem:sub.1StepBound}}$ which is at most 
$C_{\ref{thm:subMeanSquaredDisplacement}} (t\vee 1)$ if 
$C_{\ref{thm:subMeanSquaredDisplacement}}$ is taken to be larger than
$C_{\ref{lem:sub.1StepBound}}$.

On the other hand, if $t\ge \beta$, choose $\ell\in \mathbb{N}$ so that 
$v:=\frac{t}{\ell}\in [\beta/2,\beta]$. Consider the discrete time finite state stationary 
reversible Markov chain given by
\[
Y_k:=M_{\frac{kv}{\mu}}, \,\,\, k\in \mathbb{Z}
\]
with state space $S:=\ndtorus\times\{0,1\}^{E(\ndtorus)}$. With all the parameters for the chain fixed, let
$h_n:S\rightarrow \mathbb{R}^{2d}$ be given by $h_n(x,\eta):=g_n(x)$. Then
Lemma \ref{lem:sub.1StepBound} (with $s=v$) together with the uniform bi--Lipschitz property
of the $g_n$'s implies that 
\[
\E{\|h_n(Y_1)-h_n(Y_0)\|_{L^2}^2}\le C^2_{\mathrm{Lip}}C_{\ref{lem:sub.1StepBound}}.
\]

We now can apply Lemma \ref{lem:Ball} with $k=\ell$ and obtain
\[
\E{\|g_n(X_{\frac{t}{\mu}})-g_n(X_{0})\|_{L^2}^2}
\le C^2_{\mathrm{Lip}}C_{\ref{lem:sub.1StepBound}}\ell.
\]
Since $\frac{t}{\ell}\in [\beta/2,\beta]$, we have that $\ell\le 2t/\beta$. Using this
and the bi--Lipschitz property of the $g_n$'s again, we obtain
\[
\E{\dist(X_{\frac{t}{\mu}},X_{0})^2}
\le 2C^4_{\mathrm{Lip}}C_{\ref{lem:sub.1StepBound}}t/\beta.
\]
As all of the terms except $t$ on the RHS only depend on $d$ and $p$, we are done.
\end{proof}

The proof of Lemma~\ref{lem:sub.1StepBound} requires the following important result
concerning subcritical percolation. 
For  $V'\subseteq V$, we let $\mathrm{Diam}(V'):=\max\{\dist(x,y):x,y\in V'\}$
denote the diameter of $V'$. This following theorem is Theorem~5.4 in \cite{Grimmett} in the case of 
$\mathbb{Z}^{d}$. The statement for $\mathbb{Z}^{d}$ immediately implies the result for $\dntorus$.

\begin{theorem}\label{thm:exp.radius}
For any $d\ge 1$ and $\alpha\in (0,p_\critical(\mathbb{Z}^{d}))$, there exists 
$C_{\ref{thm:exp.radius}}=C_{\ref{thm:exp.radius}}(d,\alpha)>0$ so that for all $r\ge 1$,
\[
\bfP_{\mathbb{Z}^{d},\alpha}({\mathrm{Diam}}(C(0))\ge r)\le e^{-C_{\ref{thm:exp.radius}}r}.
\]
The previous line holds with $\mathbb{Z}^{d}$ replaced by $\dntorus$.
\end{theorem}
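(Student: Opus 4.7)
The plan is to reduce the theorem to the classical Aizenman--Barsky--Menshikov exponential decay of the cluster radius, which is exactly Theorem~5.4 in Grimmett's book. That result states that for $\alpha<p_\critical(\mathbb{Z}^d)$ there exists $\psi(\alpha)>0$ with
\[
\bfP_{\mathbb{Z}^d,\alpha}\bigl(0\leftrightarrow \partial B_{\dist}(0,k)\bigr)\le e^{-\psi(\alpha)k}
\qquad\text{for all }k\ge 1.
\]
Given this, the first step is the elementary observation that $\mathrm{Diam}(C(0))\ge r$ forces the existence of some $z\in C(0)$ with $\dist(0,z)\ge r/2$ (if $x,y\in C(0)$ realize the diameter, the triangle inequality applied to $\dist(x,y)\ge r$ shows that at least one of them is at distance $\ge r/2$ from $0$). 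Hence
\[
\{\mathrm{Diam}(C(0))\ge r\}\subseteq \{0\leftrightarrow \partial B_{\dist}(0,\lceil r/2\rceil)\},
\]
and the statement on $\mathbb{Z}^d$ follows with $C_{\ref{thm:exp.radius}}=\psi(\alpha)/2$.

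For the torus statement, the idea is to couple $\alpha$-percolation on $\dntorus$ to $\alpha$-percolation on $\mathbb{Z}^d$ so that they agree on the ball $B_{\dist}^{\dntorus}(0,\lfloor n/2\rfloor)$, which is naturally a subgraph of both graphs (no edges wrap for this radius). The coupling is obtained by identifying this ball in the torus with the corresponding ball in $\mathbb{Z}^d$ and sampling all remaining edges independently. Now suppose $r\le n$ and $\mathrm{Diam}(C(0))\ge r$ on $\dntorus$; there is a $z$ in the torus cluster with $\dist_{\dntorus}(0,z)\ge r/2$, and therefore an open path from $0$ to $z$. Stopping this path at the first vertex it reaches at distance exactly $\lceil r/2\rceil$ from $0$ yields an open path that, by its definition, stays entirely inside $B_{\dist}^{\dntorus}(0,\lceil r/2\rceil)\subseteq B_{\dist}^{\dntorus}(0,\lfloor n/2\rfloor)$. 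Under the coupling this path is an open path in $\mathbb{Z}^d$ witnessing $\{0\leftrightarrow \partial B_{\dist}(0,\lceil r/2\rceil)\}$, and the $\mathbb{Z}^d$ bound applies.

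The only remaining case is $r>n$. Here one simply notes that the diameter in $\dntorus$ is at most $dn$ deterministically, so the probability is zero once $r>dn$, while for $n<r\le dn$ the inequality for $r=n$ already gives $e^{-\psi(\alpha)n/2}\le e^{-(\psi(\alpha)/(2d))r}$. Choosing $C_{\ref{thm:exp.radius}}=\psi(\alpha)/(2d)$ unifies all regimes. The main (minor) obstacle in this argument is making sure the local identification of balls is clean: one should specify $r/2$ well enough below $n/2$ that no edge of the truncated path is a ``wrap-around'' edge, which is precisely why we cut the path at $\lceil r/2\rceil\le n/2$. Beyond this bookkeeping, no new ingredient is needed.
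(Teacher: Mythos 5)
Your proof is correct and follows the paper's route exactly: the paper also just invokes Theorem~5.4 of \cite{Grimmett} for $\mathbb{Z}^d$ and asserts that the torus case is immediate, whereas you supply the (standard) local identification of balls that makes the torus deduction explicit. The only caveat is the boundary case you yourself flag --- to guarantee the truncated path lifts faithfully you need $\lceil r/2\rceil$ strictly below $n/2$ (e.g.\ restrict the coupling to $r\le n/2$ and absorb the remaining range $n/2<r\le dn$ into the constant exactly as you already do for $r>n$) --- which is harmless bookkeeping.
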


We now give the 

\begin{proof}[{\bf Proof of Lemma~\ref{lem:sub.1StepBound}}]
Let $\overline{\eta}$ be the set of edges which are open some time during $[0,\beta/\mu]$.
By our choice of $\beta$, there exists $p_0=p_0(d,p)< p_c(\mathbb{Z}^d)$ so that for all
$n$ and all $\mu$, the distribution of $\overline{\eta}$ is $\pi_{p_0}$.

Letting $C_{\overline{\eta}}(x)$ denote the cluster of $x$ with respect to the bond configuration
$\overline{\eta}$, the observation above and 
Theorem \ref{thm:exp.radius} implies that there exists a constant
$C_{\ref{lem:sub.1StepBound}.1}=C_{\ref{lem:sub.1StepBound}.1}(d,p)$ so that for all $n$, for all $\mu$
and for all $x\in\ndtorus$,
\[
\E{(\mathrm{Diam}(C_{\overline{\eta}}(x)))^2}\le C_{\ref{lem:sub.1StepBound}.1}.
\]
By independence of $X_{0}$ and 
${\overline{\eta}}$, we get  
\begin{equation}\label{eq:RadiusSquaredBound}
\E{(\mathrm{Diam}(C_{\overline{\eta}}(X_{0})))^2}\le C_{\ref{lem:sub.1StepBound}.1}.
\end{equation}
Since the random walker can only move along
${\overline{\eta}}$ during $[0,\beta/\mu]$, we have that for all $s\le \beta$,
$X_{\frac{s}{\mu}}$ necessarily belongs to 
$C_{\overline{\eta}}(X_{0})$ and hence
$\dist(X_{\frac{s}{\mu}},X_0)\le 
\mathrm{Diam}(C_{\overline{\eta}}(X_{0}))$. The result now follows from 
(\ref{eq:RadiusSquaredBound}).  
\end{proof}                     

We next move to the 

\begin{proof}[{\bf Proof of Corollary~\ref{cor:subMeanSquaredDisplacement}}]
Fix $\mu$ and $t$. By Theorem~\ref{thm:subMeanSquaredDisplacement} and symmetry, 
we have that for each $n$,
\[
\E{\dist(X_{\frac{t}{\mu}},0)^2|\delta_0\times \pi_p}\le 
C_{\ref{thm:subMeanSquaredDisplacement}} (t \vee 1).
\]
Clearly $\dist(X_{\frac{t}{\mu}},0)|\delta_0\times \pi_p$ converges in 
distribution to  $\dist(X_{\frac{t}{\mu}},0)$ as $n\to\infty$ where the latter is started
in $\delta_0\times \pi_p$. The result now follows by squaring and applying Fatou's lemma.
\end{proof}

\begin{proof}[{\bf Proof of Theorem~\ref{thm:sub}(ii)}]
Fix $d$, $p\in (0,p_\critical(\mathbb{Z}^{d}))$ and $\epsilon<1$. It suffices to show that
there exists $\delta=\delta(d,p,\epsilon)>0$ and $n_0=n_0(d,p,\epsilon)>0$ so that for $n\ge n_0$
and $s\le \frac{\delta n^2}{\mu}$
\[
\tv{\calL(X_s)}{u} > \epsilon.
\]
By symmetry, the distribution of 
$\dist(X_{\frac{t}{\mu}},X_0)$ conditioned on
$\{X_0=a\}$ does not depend on $a$. Hence by Theorem \ref{thm:subMeanSquaredDisplacement}
and Markov's inequality, we have that for all $\lambda >0$, for all $n$,
for all $\mu$ and for all $t$
\begin{equation}\label{eq:1StepBoundUsingMarkov}
\pr{\dist(X_{\frac{t}{\mu}},0)\ge \lambda|\delta_0\times \pi_p}
\le C_{\ref{thm:subMeanSquaredDisplacement}}(t\vee 1)/\lambda^2
\end{equation}
where $C_{\ref{thm:subMeanSquaredDisplacement}}$ comes from 
Theorem \ref{thm:subMeanSquaredDisplacement}. Next, choose $b=b(d,\epsilon)>0$ so that
\[
(2b)^d< \frac{1-\epsilon}{2}.
\]
We then have that there exists $n_0=n_0(d,p,\epsilon)>0$ sufficiently large
so that for all $n\ge n_0$ we have that
\begin{equation}\label{eq:GeometricBound}
|\{x\in\ndtorus:\dist(x,0)\le bn\}|\le \frac{(1-\epsilon)n^d}{2}
\end{equation}
and
\[
\frac{C_{\ref{thm:subMeanSquaredDisplacement}}}{(bn)^2} < \frac{1-\epsilon}{2}.
\]
Next choose $\delta=\delta(d,p,\epsilon)>0$ so that
\[
\frac{C_{\ref{thm:subMeanSquaredDisplacement}}\delta}{b^2}<  \frac{1-\epsilon}{2}.
\]

We now let $n\ge n_0$ and $s\le \frac{\delta n^2}{\mu}$. Applying
(\ref{eq:1StepBoundUsingMarkov}) with $t=s\mu \le \delta n^2$ and
$\lambda=bn$ yields
\begin{equation}\label{eq:1StepBoundUsingMarkovAGAIN}
\pr{\dist(X_{s},0)\ge bn|\delta_0\times \pi_p}
\le \frac{C_{\ref{thm:subMeanSquaredDisplacement}}(\delta n^2\vee 1)}{(bn)^2}
< \frac{1-\epsilon}{2}.
\end{equation}
Letting $E_n:=\{x\in\ndtorus:\dist(x,0)\le bn\}$, we have by (\ref{eq:1StepBoundUsingMarkovAGAIN})
\[
\pr{X_s\in E_n|\delta_0\times \pi_p} \ge \frac{1+\epsilon}{2}
\]
and by (\ref{eq:GeometricBound}), we have
\[
u(E_n)\le \frac{1-\epsilon}{2}.
\]
Hence, by considering the set $E_n$, it follows that
$\tv{\calL(X_s)}{u} \ge \frac{1+\epsilon}{2}-\frac{1-\epsilon}{2}=\epsilon$, completing the proof.
\end{proof}

We end this section by proving that not only is the mixing time for the full system of order 
at least $n^2/\mu$ but that this is also a lower bound on the relaxation time. Moreover, in proving this, 
we will only use Lemma \ref{lem:sub.1StepBound} and do not need to appeal to the so-called 
Markov type inequality contained in Lemma \ref{lem:Ball}. 

\begin{proposition}\label{prop:relaxation}
For any $d$ and $p\in (0,p_\critical(\mathbb{Z}^{d}))$, there exists 
$C_{\ref{prop:relaxation}}=C_{\ref{prop:relaxation}}(d,p)>0$
such that, for all $n$ and for all $\mu$, the relaxation time of the full system is at least
$\frac{C_{\ref{prop:relaxation}}n^2}{\mu}$.
\end{proposition}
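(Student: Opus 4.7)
The plan is to apply the standard $L^2$-decay characterization of the relaxation time, with Lemma~\ref{lem:sub.1StepBound} controlling the one-step mean squared displacement. I would take $f(x,\eta) := n\sin(2\pi x_1/n)$, which depends only on the walker's first coordinate and satisfies $\pi(f)=0$ and $\operatorname{\mathbf{Var}}_\pi(f) = n^2/2$. For any reversible continuous-time Markov chain in stationarity, one has the standard identity
\[
\E{(f(M_t)-f(M_0))^2} = 2\operatorname{\mathbf{Var}}_\pi(f) - 2\|P_{t/2}f\|_{L^2(\pi)}^2,
\]
and combined with the Poincar\'e bound $\|P_{t/2}f\|_{L^2(\pi)}^2 \le e^{-t/t_{\mathrm{rel}}}\operatorname{\mathbf{Var}}_\pi(f)$ this rearranges to
\[
1-e^{-t/t_{\mathrm{rel}}} \le \frac{\E{(f(M_t)-f(M_0))^2}}{2\operatorname{\mathbf{Var}}_\pi(f)}.
\]

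Next, since $f$ is $2\pi$-Lipschitz with respect to $\dist$ on $\ndtorus$,
\[
\E{(f(M_t)-f(M_0))^2} \le 4\pi^2\,\E{\dist(X_t,X_0)^2}.
\]
I would then set $t := \beta/\mu$ with $\beta = \beta(d,p)$ as in the proof of Theorem~\ref{thm:subMeanSquaredDisplacement}. Lemma~\ref{lem:sub.1StepBound} yields $\E{\dist(X_{\beta/\mu},X_0)^2} \le C_{\ref{lem:sub.1StepBound}}$, so dividing by $2\operatorname{\mathbf{Var}}_\pi(f) = n^2$ gives
\[
1-e^{-\beta/(\mu\,t_{\mathrm{rel}})} \le \frac{4\pi^2 C_{\ref{lem:sub.1StepBound}}}{n^2}.
\]
For $n$ large enough that the right-hand side is at most $1/2$, the inequality $-\log(1-x)\le 2x$ on $[0,1/2]$ rearranges this to $t_{\mathrm{rel}} \ge \beta n^2/(8\pi^2 C_{\ref{lem:sub.1StepBound}}\mu)$. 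For $n$ below this threshold the conclusion holds trivially by choosing $C_{\ref{prop:relaxation}}$ small enough.

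The argument is essentially a direct application of the $L^2$ decay identity for reversible Markov semigroups; no serious obstacle arises beyond verifying this standard identity in the continuous-time reversible setting. The role of the subcritical regime enters only through Lemma~\ref{lem:sub.1StepBound}, which supplies a uniform-in-$n$ mean squared displacement bound at the single time scale $1/\mu$. This is precisely why the relaxation time lower bound, unlike the full mixing-time lower bound of Theorem~\ref{thm:sub}(ii), does not require the Markov-type inequality of Lemma~\ref{lem:Ball}: one only needs to bound the displacement at a single time $t \asymp 1/\mu$, not to extrapolate it up to times of order $n^2/\mu$.
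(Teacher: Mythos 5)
Your proof is correct and follows essentially the same route as the paper's: a Dirichlet-form/Poincar\'e bound applied at the single time $\beta/\mu$ to a Lipschitz function of the walker's position with variance of order $n^2$, with Lemma~\ref{lem:sub.1StepBound} supplying the uniform displacement bound. The only differences are cosmetic: the paper uses the test function $\dist(x,0)$ and phrases the $L^2$-decay step via the spectral gap of the skeleton chain at times $k\beta/\mu$, whereas you use $n\sin(2\pi x_1/n)$ and write the equivalent inequality directly for the continuous-time semigroup.
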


\begin{remarks}{\rm While it is a general fact that the mixing time is bounded below by (a universal
constant times) the relaxation time, this does not provide an alternative proof of Theorem 
\ref{thm:sub}(ii) for two reasons. First, in the latter, we have a lower bound for the ``mixing time'' 
of the walker (which is stronger than just having a lower bound on the mixing time for the full system) and 
secondly $\epsilon$ in Theorem \ref{thm:sub}(ii) can be taken close to 1 while one could only conclude
this for $\epsilon < \frac{1}{2}$ directly from a lower bound on the relaxation time.}
\end{remarks}

\begin{proof}
We will obtain an upper bound on the spectral gap by considering the usual Dirichlet form;
see Section 13.3 in \cite{LPW}. Consider the function $f_n:\ndtorus\times\{0,1\}^{E(\ndtorus)}
\rightarrow \mathbb{R}$ given by $f_n(x,\eta):=\dist(x,0)$ where $0$ is the origin in $\ndtorus$.
Clearly, there exists a constant $C_{\ref{prop:relaxation}.1}=C_{\ref{prop:relaxation}.1}(d)>0$ 
such that for all $d,p,n$ and $\mu$,
\[
{\rm Var}(f_n)\ge C_{\ref{prop:relaxation}.1} n^2
\]
where ${\rm Var}(f_n)$ denotes the variance of $f_n$ with respect to the stationary distribution. 

Letting $\beta$ be defined as in the proof of Theorem~\ref{thm:subMeanSquaredDisplacement},
Lemma \ref{lem:sub.1StepBound} and the triangle inequality imply that
\[
\E{|f_n(M_{\frac{\beta}{\mu}})-f_n(M_0)|^2}\le 4 C_{\ref{lem:sub.1StepBound}}.
\]
Hence
\[
\frac{\E{|f_n(M_{\frac{\beta}{\mu}})-f_n(M_0)|^2}}{2{\rm Var}(f_n)}
\le \frac{2C_{\ref{lem:sub.1StepBound}}}{C_{\ref{prop:relaxation}.1} n^2}.
\]
By Section 13.3 in \cite{LPW}, we conclude that the spectral gap for the discrete time process 
viewed at times $0,\frac{\beta}{\mu},\frac{2\beta}{\mu},\ldots$ is at most
$\frac{2C_{\ref{lem:sub.1StepBound}}}{C_{\ref{prop:relaxation}.1} n^2}$.  If 
$-\lambda=-\lambda(d,p,n,\mu)$ is the nonzero
eigenvalue of minimum absolute value for the infinitesimal generator of the continuous time process 
(in which case $\lambda$ is the spectral gap for the continuous time process), 
then the spectral gap for the above discrete time process is $1-e^{\frac{-\lambda\beta}{\mu}}$ and so
\[
1-e^{\frac{-\lambda\beta}{\mu}}\le 
\frac{2C_{\ref{lem:sub.1StepBound}}}{C_{\ref{prop:relaxation}.1} n^2}.
\]
We can conclude that for large $n$, for any $\mu$, we have that
$\frac{\lambda\beta}{\mu}\le \frac{1}{2}$. Since $1-e^{-x}\ge x/2$ on $[0,1]$, we conclude
that 
\[
\frac{\lambda\beta}{2\mu}\le 
\frac{2C_{\ref{lem:sub.1StepBound}}}{C_{\ref{prop:relaxation}.1} n^2}
\]
or
\[
\lambda\le \frac{4C_{\ref{lem:sub.1StepBound}}\mu}{\beta C_{\ref{prop:relaxation}.1} n^2}.
\]
Since the relaxation time is the reciprocal of the spectral gap, we are done.
\end{proof}


\section{Proofs of the mixing time lower bounds in the general case}
\label{sec:LowerBoundGeneral}

In this section, we prove Theorem~\ref{thm:supMeanSquaredDisplacement},
Theorem \ref{thm:sup} and Corollary~\ref{cor:supMeanSquaredDisplacement}.
We begin with the proof of Theorem~\ref{thm:supMeanSquaredDisplacement}
as this will be used in the proof of Theorem \ref{thm:sup}.

\begin{proof}[{\bf Proof of Theorem~\ref{thm:supMeanSquaredDisplacement}}]
Clearly, $\dist(X_{a},X_0)$ is stochastically dominated by a Poisson random variable
with parameter $s$. It follows that
\begin{equation}\label{eq:PoissonBound}
\E{\dist(X_{s},X_0)^2}\le s+s^2.
\end{equation}
(This will be used in the same way that Lemma \ref{lem:sub.1StepBound} was used.)
(\ref{eq:PoissonBound}) tells us that (\ref{eq:thm:supMeanSquaredDisplacement}) holds
for $t\le 1$ if $C_{\ref{thm:supMeanSquaredDisplacement}}\ge 2$.

If $t\ge 1$, choose $\ell\in \mathbb{N}$ so that $v:=\frac{t}{\ell}\in [1/2,1]$. Consider the discrete 
time finite state stationary reversible Markov chain given by
\[
Y_k:=M_{kv}, \,\,\, k\in \mathbb{Z}.
\]
Letting $S$ and $h_n$ be as in the proof of Theorem~\ref{thm:subMeanSquaredDisplacement},
(\ref{eq:PoissonBound}) (with $s=v\le 1$) together with the uniform bi--Lipschitz property
of the $g_n$'s implies that 
\[
\E{\|h(Y_1)-h(Y_0)\|_{L^2}^2}\le 2 C^2_{\mathrm{Lip}}.
\]

Lemma \ref{lem:Ball} with $k=\ell$ now yields
\[
\E{\|g_n(X_t)-g_n(X_{0})\|_{L^2}^2} \le 2 C^2_{\mathrm{Lip}}\ell.
\]
Since $\frac{t}{\ell}\in [1/2,1]$, we have that $\ell\le 2t$. Using this
and the bi--Lipschitz property of the $g_n$'s again, we obtain
\[
\E{\dist(X_t,X_{0})^2}
\le 4C^4_{\mathrm{Lip}}t.
\]
Letting $C_{\ref{thm:supMeanSquaredDisplacement}}:=4C^4_{\mathrm{Lip}} (\ge 2)$, we obtain 
(\ref{eq:thm:supMeanSquaredDisplacement}) for $t\ge 1$ as well.
\end{proof}

\begin{proof}[{\bf Proof of Corollary~\ref{cor:supMeanSquaredDisplacement}}]
This can be obtained from Theorem~\ref{thm:supMeanSquaredDisplacement} in the 
exact same way as Corollary~\ref{cor:subMeanSquaredDisplacement} was obtained from
Theorem~\ref{thm:subMeanSquaredDisplacement}.
\end{proof}

\begin{proof}[{\bf Proof of Theorem~\ref{thm:sup}}]
(i). One can check that in the same way that Theorem~\ref{thm:sub}(ii) is proved using 
Theorem~\ref{thm:subMeanSquaredDisplacement}, one can use
Theorem~\ref{thm:supMeanSquaredDisplacement} to prove this part.
The details are left to the reader.\\
(ii). Fix $\epsilon<1-\theta_d(p)$.
Let $\rho = \rho(d,p,\epsilon) := \sqrt{\frac{1-\theta_d(p)+\epsilon}{2(1-\theta_d(p))}}\in(0,1)$.
By countable additivity, there exists $\kappa=\kappa(d,p,\epsilon)$ 
so that $\bfP_{\mathbb{Z}^{d},p}(|C(0)|\leq \kappa)\geq (1-\theta_d(p))\rho$.
For $n > \kappa$, we therefore have that
$\bfP_{\ndtorus,p}(|C(0)|\leq \kappa)\geq (1-\theta_d(p))\rho$.
Choose $C_{\ref{thm:sup}.2}=C_{\ref{thm:sup}.2}(d,p,\epsilon)$ sufficiently small so that
$e^{-2d\kappa C_{\ref{thm:sup}.2}}\geq \rho$. 

Now, let $C_t(0)$ be the cluster of the origin at time $t$.
For any $n$ larger than $\kappa$ and for any $\mu$,
conditioned on $\{|C_0(0)|\leq \kappa\}$, the conditional probability that no edges adjacent to 
$C_0(0)$ refresh during $[0,\frac{C_{\ref{thm:sup}.2}}{\mu}]$ is at least 
$e^{-2d\kappa C_{\ref{thm:sup}.2}}$ which was chosen larger than $\rho$.  
If $|C_0(0)|\leq \kappa$ and no edges adjacent to $C_0(0)$ refresh during 
$[0,\frac{C_{\ref{thm:sup}.2}}{\mu}]$, then it is necessarily the case that
$\dist(X_{\frac{C_{\ref{thm:sup}.2}}{\mu}},0)\le \kappa$. Hence
\[
\pr{\dist(X_{\frac{C_{\ref{thm:sup}.2}}{\mu}},0)\le \kappa|\delta_0\times \pi_p}
\ge (1-\theta_d(p))\rho^2=\frac{1-\theta_d(p)+\epsilon}{2} > \epsilon.
\]
Letting $E_n:=\{x\in\ndtorus:\dist(x,0)\le \kappa\}$, we therefore have
\[
\pr{X_{\frac{C_{\ref{thm:sup}.2}}{\mu}}\in E_n|\delta_0\times \pi_p} > \epsilon.
\]
On the other hand, it is clear that $u(E_n)$ goes to 0 as $n\to\infty$. 
This demonstrates (\ref{eq:SecondLowerBoundInSuper}) and completes the proof.
\end{proof}

We end this section by stating a proposition concerning the relaxation time analogous to 
Proposition \ref{prop:relaxation} which holds for all $p$. The proof of this proposition 
follows the proof of Proposition \ref{prop:relaxation} in a similar way to how the
proof of Theorem~\ref{thm:supMeanSquaredDisplacement} followed the proof of
Theorem~\ref{thm:subMeanSquaredDisplacement}. The details are left to the reader.

\begin{proposition}\label{prop:relaxationGeneral}
For any $d$, there exists 
$C_{\ref{prop:relaxationGeneral}}=C_{\ref{prop:relaxationGeneral}}(d)>0$
such that, for all $n$, $p$ and $\mu$, the relaxation time of the full system is at least
$C_{\ref{prop:relaxationGeneral}}n^2$.
\end{proposition}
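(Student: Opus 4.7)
The plan is to mimic the proof of Proposition \ref{prop:relaxation}, substituting the subcritical one-step bound (Lemma \ref{lem:sub.1StepBound}) with the universal Poisson displacement bound $\E{\dist(X_s,X_0)^2}\le s+s^2$ established as inequality (\ref{eq:PoissonBound}) in the proof of Theorem \ref{thm:supMeanSquaredDisplacement}. Since the Poisson bound is uniform in $p$ and $\mu$, this will automatically give a bound on the relaxation time depending only on $d$ (with no factor of $1/\mu$, reflecting the fact that the walker attempts steps at rate $1$ regardless of $\mu$).

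Concretely, I would again take the test function $f_n:\ndtorus\times\{0,1\}^{E(\ndtorus)}\to\mathbb{R}$ defined by $f_n(x,\eta):=\dist(x,0)$. With respect to the stationary measure $u\times\pi_p$, the marginal of $X$ is uniform on $\ndtorus$, so there exists $C'=C'(d)>0$ with
\[
\var{f_n}\ge C' n^2
\]
for all $n$, $p$, $\mu$; this is a purely geometric statement about the torus. Next, by the triangle inequality $|f_n(M_1)-f_n(M_0)|\le \dist(X_1,X_0)$, and by (\ref{eq:PoissonBound}) with $s=1$,
\[
\E{|f_n(M_1)-f_n(M_0)|^2}\le \E{\dist(X_1,X_0)^2}\le 2.
\]

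I would then feed this into the standard Dirichlet form bound for reversible chains (Section 13.3 of \cite{LPW}) applied to the chain $\{M_k\}_{k\in\mathbb{Z}}$ sampled at unit integer times. This gives
\[
1-e^{-\lambda}\le \frac{\E{|f_n(M_1)-f_n(M_0)|^2}}{2\var{f_n}}\le \frac{1}{C'n^2},
\]
where $-\lambda$ is the nonzero eigenvalue of minimal absolute value for the continuous-time generator and $1-e^{-\lambda}$ is the spectral gap of the chain sampled at integer times. For $n$ large enough (depending only on $d$) we have $\lambda\le 1$, and then using $1-e^{-x}\ge x/2$ on $[0,1]$ we conclude $\lambda\le 2/(C'n^2)$. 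Adjusting the constant to handle the finitely many small values of $n$ and taking reciprocals gives the claimed lower bound $C_{\ref{prop:relaxationGeneral}}n^2$ on the relaxation time.

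There is no genuine obstacle here: the proof is a direct transcription of the proof of Proposition \ref{prop:relaxation}, with the only change being that the one-step variance of the walker is controlled by the trivial Poisson bound (valid for all $p$) rather than by the exponential cluster-diameter estimate (valid only for $p<p_\critical$). Correspondingly, the natural time scale in the Dirichlet form computation is $1$ rather than $\beta/\mu$, which is precisely why no factor of $1/\mu$ appears in the final bound.
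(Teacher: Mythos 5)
Your proof is correct and is exactly the argument the paper intends: the authors explicitly state that Proposition \ref{prop:relaxationGeneral} follows the proof of Proposition \ref{prop:relaxation} with the subcritical one-step bound replaced as in the passage from Theorem \ref{thm:subMeanSquaredDisplacement} to Theorem \ref{thm:supMeanSquaredDisplacement}, leaving the details to the reader. Your substitution of the Poisson bound (\ref{eq:PoissonBound}) at time scale $1$ in place of Lemma \ref{lem:sub.1StepBound} at time scale $\beta/\mu$, together with the same test function and Dirichlet form computation, fills in those details precisely.
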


\section{Proof of the mixing time upper bound in the subcritical case}
\label{sec:UpperBoundSub}

In this section, we prove Theorem \ref{thm:sub}(i).
This section is broken into four subsections. The first sets up the key technique of increasing
the state space, the second gives a sketch of the proof, the third provides
some percolation preliminaries and the fourth finally gives the proof.

\subsection{Increasing the state space in the general case}

In this subsection, we fix an arbitrary graph $G=(V,E)$ with constant degree
and parameters $p$ and $\mu$ and consider the resulting random
walk in dynamical percolation which we denote by $\{M_t\}_{t\ge 0}=\{(X_t,\eta_t)\}_{t\ge 0}$.
In order to obtain upper bounds on the mixing time, it will be useful to introduce
another Markov process which we denote by $\{\tilde{M}_t\}_{t\ge 0}=\{(X_t,\tilde{\eta}_t)\}_{t\ge 0}$
which will incorporate more information than $\{M_t\}_{t\ge 0}$;
the extra information will be the set of edges that the random walker has attempted to
cross since their last refresh time. The state space for this Markov process will be
\begin{equation}\label{eq:DefOfOmega}
\Omega:=\{(v,\tilde{\eta})\in V\times \{0,1,0^\fresh,1^\fresh\}^E:\tilde{\eta}(e)\in\{0,1\}\mbox{ for each $e$ adjacent to } v\}.
\end{equation}
If we identify $0^\fresh$ with $0$ and $1^\fresh$ with $1$, we want to
recover our process $\{M_t\}_{t\ge 0}$.
The idea of the possible extra $\fresh$ for the state of the edge $e$
is that this will indicate that the walker has not touched the endpoints of $e$ 
since $e$'s last refresh time. Hence, for such an edge $e$, 
whether there is a $1^\fresh$ or $0^\fresh$ at $e$ at that time is independent of everything else.

With the above in mind, it should be clear that we should define $\{\tilde{M}_t\}_{t\ge 0}$ 
as follows. An edge refreshes itself at rate $\mu$. Independent of everything else before 
the refresh time, the state of the edge after the refresh time will be $1^\fresh$ with 
probability $p$ and $0^\fresh$ with probability $1-p$ unless the edge is adjacent to the 
walker at that time. If the latter is the case, then the state of the edge after the refresh 
time will instead be $1$ with probability $p$ and $0$ with probability $1-p$. The random 
walker will as before choose at rate 1 a uniform neighbor (in the original graph) and move 
along that edge if the edge is in state 1 and not if the edge is in state 0. 
(Note that this edge can only be in state 1 or 0 since it is adjacent to the walker.) Finally, when
the random walker moves along an edge, the $\fresh$'s are removed from all edges which 
become adjacent to the walker. Clearly, dropping $\fresh$'s recovers the original process 
$\{M_t\}_{t\ge 0}$. We call an edge open if its state is $1^\fresh$ or $1$ and closed otherwise.

In order to exploit the $\fresh$-edges, we want that conditioned on (1) the position of the walker,
(2) the collection of $\fresh$-edges and (3) the states of the non-$\fresh$-edges,
we have no information concerning the states of the $\fresh$-edges. This is not necessarily true
for all starting distributions. We therefore restrict ourselves to a certain class of distributions. 
To define this, we first let
\[
\Pi:\{0,1,0^\fresh,1^\fresh\}^E\to\{0,1,\fresh\}^E
\]
be defined by identifying $1^\fresh$ and $0^\fresh$.

\begin{definition}\label{defn:GoodDistribution}
A probability measure on
$\Omega$ (as defined in (\ref{eq:DefOfOmega})) is called {\em good} if 
conditioned on $(v,\Pi(\eta))$, the conditional distribution of $\eta$ at the 
$\fresh$-edges is i.i.d.\ $1^\fresh$ with probability $p$ and $0^\fresh$ with probability $1-p$.
\end{definition}

Note that if $\mu$ is supported on $V\times\{0,1\}^E$, then $\mu$ is good. 

We will let $\{\mathcal{F}_t\}_{t\ge 0}$ be the natural filtration of $\sigma$-algebras 
generated by $\{M_t\}_{t\ge 0}$ which also keeps track of all the refresh times
and the attempted steps made by the walker. Note that
$\{\tilde{M}_t\}_{t\ge 0}$ is measurable with respect to this filtration.
Next, let $\{\mathcal{F}^\fresh_t\}_{t\ge 0}$ be the smaller filtration of $\sigma$-algebras
which is obtained when one does not distinguish $1^\fresh$ and $0^\fresh$ but is otherwise
the same. This filtration will be {\em critical} for our analysis.

A key property of good distributions, which also indicates the importance of the filtration
$\{\mathcal{F}^\fresh_t\}_{t\ge 0}$, is given in the following obvious lemma, whose proof is left to the reader. 

\begin{lemma}\label{lem:GoodDistributionsPreserved}
If the starting distribution for the Markov process $\{\tilde{M}_t\}_{t\ge 0}$ is good,
then, for all $s$, the conditional distribution of $\tilde{M}_s$ given $\mathcal{F}^\fresh_s$ is good,
as is the unconditional distribution of $\tilde{M}_s$. More generally, if $S$ is a 
$\{\mathcal{F}^\fresh_t\}_{t\ge 0}$ stopping time, then the conditional distribution of $\tilde{M}_S$ given 
$\mathcal{F}^\fresh_S$ is good, as is the unconditional distribution of $\tilde{M}_S$.
\end{lemma}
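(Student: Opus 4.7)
The plan is to realize $\{\tilde M_t\}_{t\ge 0}$ from an explicit collection of independent driving inputs and then to identify precisely what $\mathcal{F}^\fresh_s$ observes. I would take (i) independent rate-$\mu$ Poisson refresh clocks $R_e$ on each edge, (ii) an independent rate-$1$ Poisson clock for the walker's attempted-step times together with independent uniform neighbor choices, and (iii) for each refresh event, an independent Bernoulli$(p)$ coin which fixes the new value of the refreshed edge. The dynamics then decides whether that coin is flagged with a $\fresh$ (walker not adjacent at the refresh) or unflagged (walker adjacent), while the underlying $0/1$ value is the same in both cases.

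The crucial observation is that $\mathcal{F}^\fresh_s$ is generated by the walker trajectory on $[0,s]$, the refresh times, and the success/failure outcomes of all attempted crossings, but does not distinguish $0^\fresh$ from $1^\fresh$. Consequently an $\mathcal{F}^\fresh_s$-measurable observable depends on the Bernoulli coin of a refresh event only if the walker has attempted to cross that edge at some time between the refresh and $s$. Since an edge which is currently $\fresh$ at time $s$ is, by definition, one for which the walker has not visited an endpoint since its last refresh (or since time $0$ if no refresh has occurred), such a coin has never been probed and is therefore independent of $\mathcal{F}^\fresh_s$.

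Given this setup, I would prove the lemma in two steps. At time $0$ the claim is immediate: $\mathcal{F}^\fresh_0 = \sigma(X_0,\Pi(\tilde\eta_0))$, and the definition of goodness gives the conclusion directly. For $s>0$, I would check by induction over the jumps of the process on $[0,s]$ that the property ``conditional on $\mathcal{F}^\fresh_t$ and $\Pi(\tilde\eta_t)$, the values of the currently-$\fresh$ edges are i.i.d.\ Bernoulli$(p)$'' is preserved by each elementary update: a refresh at a non-adjacent edge introduces a new independent $\fresh$-coin; a refresh at an adjacent edge replaces a non-$\fresh$ value by a new independent unflagged $\{0,1\}$ value, leaving the $\fresh$-coins untouched; an attempted crossing along a (necessarily non-$\fresh$) edge reveals its value, again leaving the $\fresh$-coins untouched; and a successful step to a new vertex $v$ merely strips $\fresh$-labels from edges now adjacent to $v$, moving the independent Bernoulli coin from the $\fresh$-class to the non-$\fresh$-class without disturbing the remaining $\fresh$-coins. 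The unconditional conclusion follows by integrating out $\mathcal{F}^\fresh_s$.

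For the extension to an $\{\mathcal{F}^\fresh_t\}$-stopping time $S$, I would invoke the strong Markov property of $\{\tilde M_t\}$ combined with an optional sampling argument: the same ``unprobed-coin'' reasoning applies verbatim because the event $\{S\le t\}$ is $\mathcal{F}^\fresh_t$-measurable and hence does not depend on the $\fresh$-coins. The main bookkeeping obstacle is setting up the driving noise cleanly enough that the independence of the unprobed coins from $\mathcal{F}^\fresh$ is manifest at each jump type; this is presumably exactly why the authors call the lemma ``obvious'' and leave the routine verification to the reader.
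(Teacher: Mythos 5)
The paper offers no proof of this lemma (it is explicitly ``left to the reader''), so there is nothing to compare against; your construction via independent refresh clocks, walker clock, and per-refresh Bernoulli$(p)$ coins is the intended argument, and your inductive verification over the jump types is correct, including the treatment of a stopping time via the $\mathcal{F}^\fresh_t$-measurability of $\{S\le t\}$. One imprecision worth flagging: $\mathcal{F}^\fresh_s$ is \emph{not} generated only by the trajectory, the refresh times, and the crossing outcomes --- by the paper's definition it records $\Pi(\tilde\eta_u)$ for all $u\le s$, hence the $0/1$ value of \emph{every} non-$\fresh$ edge at every earlier time, so a refresh coin is revealed as soon as the walker becomes adjacent to the edge (which strips the $\fresh$), not only when it attempts to cross. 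This does not damage the proof, since an edge that is still $\fresh$ at time $s$ has by definition had no endpoint visited since its last refresh, so its governing coin has remained hidden under $\Pi$ throughout and is independent of $\mathcal{F}^\fresh_s$; indeed your jump-by-jump induction already accounts for revelation at the moment of adjacency rather than at the moment of crossing. With that characterization corrected, the argument is complete.
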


\subsection{Sketch of proof}
In order to make this argument more digestable, we explain here first the outline of the proof. 
Throughout this and the next
subsection, our processes of course depend on $d,p,n$ and $\mu$; however, we will drop these
in the notation throughout which will not cause any problems.
We start $\{\tilde{M}_t\}_{t\ge 0}$ with two initial configurations both in 
$\ndtorus\times\{0,1\}^{E(\ndtorus)}$; recall that these are necessarily good distributions.

We want to find a coupling of the two processes and a random time $T$ with mean of order at most 
$n^2/\mu$ so that after time $T$, the two configurations agree. Since $\{M_t\}_{t\ge 0}$ is obtained 
from $\{\tilde{M}_t\}_{t\ge 0}$ by dropping the $\fresh$'s, we will obtain our result.

This coupling will be achieved in three distinct stages. 

Stage 1. \\
In this first phase, we will 
run the processes independently until they simultaneously reach the set 
\begin{equation}\label{eq:DefRegeneration}
\regeneration := \{(x,\tilde{\eta}) \in \Omega \colon \tilde{\eta}(e)=0 \text{ for all $e$ adjacent to $x$ and } 
\tilde{\eta}(e)\in\{1^\fresh,0^\fresh\} \text{ for all other } e \}.
\end{equation}
Proposition \ref{prop:CouplingFirstPart} 
says that this will take at most order $\log n/\mu$ units of time. To prove this, one 
considers the set of edges
\[
A_s:=\{e:\tilde{\eta}_s(e)\in \{0,1\}\}\,\,\, (\mbox{i.e., the set of edges without a $\fresh$ at time $s$}).
\]
The hardest step is to show that on the appropriate time scale of order $1/\mu$,
the sets $A_s$ tend to decrease in size; this is the content of 
Proposition \ref{prop:DecreaseOfAs}, which relies on comparisons with subcritical percolation.
The fact that $A_s$ tends to decrease is intuitive as follows. A fixed proportion of the
set $A_s$ will be refreshed
during an interval of order $1/\mu$ while the random walker (which is causing
$A_s$ to increase by encountering new edges) is somewhat confined even on this time scale
since we are in a subcritical setting. Next 
Lemma \ref{lem:FixedChanceOfHittingHome} will tell us that once 
$A_s$ is relatively small, then the process will enter $\regeneration$
within a time interval of order $1/\mu$ with a fixed positive 
probability. Proposition \ref{prop:DecreaseOfAs} and Lemma \ref{lem:FixedChanceOfHittingHome} 
will allow us to prove Proposition \ref{prop:CouplingFirstPart}.

Stage 2. \\
At the start of the second stage, the two distributions are the same up to a translation $\sigma$.
At this point, we look at excursions from $\regeneration$ at discrete times on the time scale $1/\mu$.
Proposition \ref{prop:DecreaseOfAs} and Lemma \ref{lem:FixedChanceOfHittingHome} will now be used
again in conjunction with Proposition \ref{prop:GoodProcessesDie} 
to show that the number of steps in such an excursion is of order 1 which means order 
$1/\mu$ in real time; this is stated in Theorem \ref{thm:RenewalsHaveMeanOne}.
The joint distribution of the number of steps in an excursion and the increment of the walker during this 
excursion is complicated but it has a component of a fixed size where the excursion is one step and the 
increment is a lazy simple random walk. Coupling lazy simple random walk on
$\dntorus$ takes on order $n^2$ steps and so we can couple two copies of our process by
having them do the exact same thing off of this component of the distribution
and doing usual lazy simple random walk coupling on this component.
Since this component has a fixed probability, this coupling will couple in order $n^2$ excursions
and hence in order $n^2/\mu$ time. 

Stage 3. \\
After this, we can couple the full systems by a color switch.

We carry this all out in detail at the end of this section.

\subsection{Some percolation preliminaries}

In this subsection, we gather a number of results concerning percolation.

\begin{theorem}\label{thm:exp.volume}
For any $d\ge 1$ and $\alpha\in (0,p_\critical(\mathbb{Z}^{d}))$, there exists 
$C_{\ref{thm:exp.volume}}=C_{\ref{thm:exp.volume}}(d,\alpha)>0$ so that for all $r\ge 2$,
\[
\bfP_{\mathbb{Z}^{d},\alpha}(|C(0)|\ge r)\le e^{-C_{\ref{thm:exp.volume}}r}.
\]
The previous line holds with $\mathbb{Z}^{d}$ replaced by $\dntorus$.
\end{theorem}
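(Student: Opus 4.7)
The result is the classical exponential decay of cluster volume in subcritical Bernoulli percolation, originally due to Aizenman and Newman and contained in Grimmett's book \cite{Grimmett}. The plan is to upgrade the exponential decay of the cluster diameter provided by Theorem \ref{thm:exp.radius} to exponential decay of the cluster volume (a genuinely stronger statement), and then to transfer the $\mathbb{Z}^{d}$ bound to the torus $\dntorus$ by an exploration coupling.

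First I would observe that Theorem \ref{thm:exp.radius} immediately yields exponential decay of the two-point function via
\[
\bfP_{\mathbb{Z}^{d},\alpha}(0\leftrightarrow x)\le \bfP_{\mathbb{Z}^{d},\alpha}(\mathrm{Diam}(C(0))\ge \dist(0,x))\le e^{-C_{\ref{thm:exp.radius}}\dist(0,x)},
\]
so summing over $x$ gives $\chi(\alpha):=\E{|C(0)|}<\infty$. The main step is then to control all factorial moments of $|C(0)|$: writing
\[
\E{|C(0)|^k}=\sum_{x_1,\ldots,x_k}\bfP_{\mathbb{Z}^{d},\alpha}(0\leftrightarrow x_i \text{ for all } i)
\]
and applying the Aizenman--Newman tree-graph inequality (a consequence of the BK inequality) to decompose the multiple-connectivity event into disjoint two-point connections along a spanning tree on $\{0,x_1,\ldots,x_k\}$, I would obtain
\[
\E{|C(0)|^k}\le k!\,A^k \quad \text{for some } A=A(d,\alpha)<\infty \text{ and all } k\ge 1.
\]
Markov's inequality then gives $\bfP_{\mathbb{Z}^{d},\alpha}(|C(0)|\ge r)\le k!\,A^k/r^k$ for every $k$; optimizing over $k$ of order $r/A$ yields the desired exponential bound $e^{-C_{\ref{thm:exp.volume}}r}$.

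The hard part is precisely this upgrade from finite susceptibility to a genuine exponential tail. The naive route using only Theorem \ref{thm:exp.radius} and the containment $|C(0)|\le (2\,\mathrm{Diam}(C(0))+1)^d$ yields merely the stretched-exponential bound $\bfP_{\mathbb{Z}^{d},\alpha}(|C(0)|\ge r)\le e^{-cr^{1/d}}$, which is insufficient; the tree-graph / BK argument (or the equivalent Hammersley subadditivity approach in \cite{Grimmett}) is needed to exploit that subcritical clusters are genuinely sparse and do not fill their ambient balls. For the torus, I would run BFS exploration of $C(0)$ simultaneously on $\dntorus$ and $\mathbb{Z}^{d}$ using a shared source of Bernoulli$(\alpha)$ randomness for each newly revealed edge; since $\dntorus$ is a quotient of $\mathbb{Z}^{d}$ by $n\mathbb{Z}^{d}$, each vertex reached in the torus exploration lifts to at least one vertex reached in the $\mathbb{Z}^{d}$ exploration, giving $|C_{\dntorus}(0)|\le |C_{\mathbb{Z}^{d}}(0)|$ in the coupling, and the exponential tail transfers.
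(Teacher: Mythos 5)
Your proposal is correct and follows essentially the same route as the paper: the paper simply cites Theorem~6.75 of \cite{Grimmett} (whose proof is exactly the Aizenman--Newman tree-graph/factorial-moment argument you sketch) for the $\mathbb{Z}^d$ statement, and cites Theorem~1 of \cite{BS96} for the transfer to the torus, whose proof is precisely the edge-lifting exploration coupling you describe. So you have unpacked the two citations rather than deviated from them, and your warning that the diameter bound alone only yields a stretched-exponential tail is a correct and relevant observation.
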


\begin{proof}
This is Theorem~6.75 in \cite{Grimmett} in the case of $\mathbb{Z}^{d}$. Next, Theorem 1 in \cite{BS96} states
that if one has a covering map from a graph $G$ to a graph $H$, then the size of a vertex component in $H$ 
is stochastically dominated by the size of the corresponding vertex component in $G$. 
(This is stated for site percolation but site percolation is more general than bond percolation.)
Since we have a covering map from $\mathbb{Z}^{d}$ to $\dntorus$, we obtain the result 
for $\dntorus$ from the result for $\mathbb{Z}^{d}$.
\end{proof}

We collect here some graph theoretic definitions that we will need.

\begin{definition}\label{defn:EdgeSet}
If  $V'\subseteq V$, then $E(V')$ will denote the set
$\{e\in E: \exists v\in V' \mbox{ with } v\in e\}$.
(It is not required that both endpoints of $e$ are in $V'$.) 
\end{definition}

\begin{definition}\label{defn:VertexSet}
If $E'\subseteq E$, then $V(E')$ will denote the union of the endpoints of the edges in $E'$.
\end{definition}

\begin{definition}\label{defn:neighborhood}
If  $V'\subseteq V$, then
$\mathcal{N}_k(V'):=\{x\in V: \exists v\in V' \mbox{ with } \dist(x,v)\le k\}$ will be 
called the $k$-neighborhood of $V'$.
\end{definition}

\begin{definition}\label{defn:cluster}
If $V'\subseteq V$, then $E\backslash V'$ is defined to be those 
edges in $E$ which have at least one endpoint not in $V'$. 
\end{definition}

Given a set of vertices $F$ of $\mathbb{Z}^{d}$ or $\dntorus$ and a bond configuration $\eta$, let
$F^\eta$ to be the set of vertices reachable from $F$ using open edges in $\eta$. 
If $F$ is a set of vertices, then the configuration $\eta$ might only be specified for edges
in $E\backslash F$ but note that this has no consequence for the definition of $F^\eta$.
For a set of vertices $F$, we also let $F^\alpha$ be the random set obtained by 
choosing $\eta\subseteq E\backslash F$ according to $\pi_\alpha$ and then taking $F^\eta$.
We let $F^{\alpha,1}:=F^\alpha$ and we also define inductively, for $L\ge 2$,
$F^{\alpha,L}:=(F^{\alpha,L-1})^\alpha$. It is implicitly assumed here that we use independent 
randomness in each iteration.

\begin{theorem}\label{thm:Literations}
Fix $d\ge 1$ and $\alpha\in (0,p_\critical(\mathbb{Z}^{d}))$. Then for all $L$, there exists 
$C_{\ref{thm:Literations}}(L)=C_{\ref{thm:Literations}}(d,\alpha,L)$ 
so that for all finite $F\subseteq \mathbb{Z}^{d}$ and for all $\ell\ge 1$,
\[
\bfP(F^{\alpha,L}\not\subseteq \mathcal{N}_\kappa(F))\le \,\,
\frac{L}{2^{\frac{\ell}{\log \ell}}}
\]
where $\kappa:=\ell C_{\ref{thm:Literations}}(L)\log(|F|\vee 2)$.
In addition, for the case $L=1$, the $\log \ell$ term can be removed.
Finally, the same result holds for $\dntorus$ as well.
\end{theorem}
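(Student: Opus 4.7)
The plan is to iterate the exponential cluster--diameter bound of Theorem~\ref{thm:exp.radius} while carrying a deterministic upper bound on the size of each successive iterate. I would first dispose of the base case $L=1$ directly: writing $D := \max_{v\in F}\mathrm{Diam}(C^{\eta}(v))$, where $C^{\eta}(v)$ is the cluster of $v$ in the configuration $\eta\sim\pi_\alpha$ on $E\setminus F$ (which is contained in the $\pi_\alpha$-cluster of $v$ on all of $E$), one has $F^{\alpha,1}\subseteq\mathcal{N}_D(F)$. A union bound over $v\in F$ combined with Theorem~\ref{thm:exp.radius} gives $\bfP(D>r)\le|F|e^{-C_{\ref{thm:exp.radius}}r}$, which is at most $2^{-\ell}$ when $r=\ell\,C''\log(|F|\vee 2)$ for a sufficiently large $C''=C''(d,\alpha)$. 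This proves the $L=1$ case with the improved exponent $\ell$ (no $\log\ell$ needed), as asserted in the last sentence of the theorem.

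For $L\ge 2$, I would iterate the base step with the reduced parameter $\ell':=\ell/\log\ell$ (taking $\ell\ge 2$), so that each iteration fails with probability at most $2^{-\ell'}=2^{-\ell/\log\ell}$. Set $M_0:=|F|$, $r_i:=\ell'C''\log(M_{i-1}\vee 2)$, and $M_i:=M_{i-1}(2r_i+1)^d$, and let $A_i$ denote the event that $F^{\alpha,i}\subseteq\mathcal{N}_{r_i}(F^{\alpha,i-1})$. On the intersection $A_1\cap\cdots\cap A_i$ one has $|F^{\alpha,i}|\le M_i$, and conditioning on $F^{\alpha,i-1}$ and applying the base-case estimate (with $\ell$ replaced by $\ell'$) gives $\bfP(A_i^c\mid F^{\alpha,i-1})\le 2^{-\ell'}$ on $A_1\cap\cdots\cap A_{i-1}$. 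Summing over $i$ yields $\bfP\bigl(F^{\alpha,L}\not\subseteq\mathcal{N}_{r_1+\cdots+r_L}(F)\bigr)\le L/2^{\ell/\log\ell}$.

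The crux, and what I expect to be the main obstacle, is verifying the deterministic inequality $\sum_{i=1}^L r_i\le \ell\,C_{\ref{thm:Literations}}(L)\log(|F|\vee 2)$. Writing $L_i:=\log(M_i\vee 2)$, the recursion $M_i=M_{i-1}(2r_i+1)^d$ yields $L_i\le L_{i-1}+C_1(1+\log\ell'+\log(L_{i-1}+e))$ for some $C_1=C_1(d)$. Iterating and using $L_j\le L_L$ for $j\le L$, one arrives at an implicit inequality whose solution is $L_L\le C_2\bigl(\log(|F|\vee 2)+L\log\ell'+L\log L\bigr)$ for some $C_2=C_2(d)$. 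Therefore
\[
\sum_{i=1}^L r_i\le L\ell'C''L_L\le C_3\,\ell'L\bigl(\log(|F|\vee 2)+L\log\ell'+L\log L\bigr),
\]
and dividing by $\ell\log(|F|\vee 2)$, using $\ell'/\ell=1/\log\ell$, $\log\ell'\le\log\ell$ and $\log(|F|\vee 2)\ge\log 2$, each term is bounded by a constant multiple of $L^2\log L$, uniformly in $\ell\ge 2$ and $|F|\ge 1$. Taking $C_{\ref{thm:Literations}}(L)$ to be a sufficiently large multiple of $L^2\log L$ then closes the bound. The torus case requires no change of argument, since Theorem~\ref{thm:exp.radius} applies verbatim on $\dntorus$ and the ball-volume bound $|\mathcal{N}_{r_i}(F^{\alpha,i-1})|\le|F^{\alpha,i-1}|(2r_i+1)^d$ remains valid there; the conclusion is vacuous once $\kappa$ exceeds the torus diameter.
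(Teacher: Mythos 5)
Your proof is correct and takes essentially the same route as the paper's: both iterate the one-step consequence of Theorem~\ref{thm:exp.radius}, control $\log|F^{\alpha,i}|$ via the volume $(2r_i+1)^d$ of the enlarged neighborhoods, and pay the $\log\ell$ factor in the exponent to absorb the growth of the iterates. The only difference is organizational — you telescope forward with an explicit constant $C_{\ref{thm:Literations}}(L)\asymp L^2\log L$, whereas the paper runs a backward induction on $L$ with the constant defined recursively.
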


\begin{proof}
The following proof works for both $\mathbb{Z}^{d}$ and $\dntorus$.
We prove this by induction on $L$. The case $L=1$ without the $\log \ell$ term follows easily from 
Theorem \ref{thm:exp.radius} and is left to the reader. We now assume the result for $L=1$ 
(without the $\log \ell$ term) and for $L-1$ and prove it for $L$. It is elementary to check that
\[
\{F^{\alpha,L}\not\subseteq \mathcal{N}_\kappa(F)\}\subseteq
E_1\cup(E_2\cap E_3)
\]
where $E_1:=\{F^{\alpha,L-1}\not\subseteq \mathcal{N}_{\kappa_1}(F)\}$,
$E_2:=\{F^{\alpha,L-1}\subseteq \mathcal{N}_{\kappa_1}(F)\}$,
$E_3:=\{F^{\alpha,L}\not\subseteq \mathcal{N}_{\kappa-\kappa_1}(F^{\alpha,L-1})\}$ and 
$\kappa_1:=\ell C_{\ref{thm:Literations}}(L-1)\log(|F|\vee 2)$.

The probability of the first event is, by induction, at most 
$\frac{L-1}{2^{\frac{\ell}{\log \ell}}}$. Note next that when $E_2$ occurs, it is necessarily the case that
\[
|F^{\alpha,L-1}|\le |F|(2\ell C_{\ref{thm:Literations}}(L-1)\log(|F|\vee 2)+1)^d.
\]
The latter yields
\begin{equation}\label{eq:BoundAtTimeLminus1}
\log(|F^{\alpha,L-1}|\vee 2)\le 
\log(|F|)+d\log(2\ell C_{\ref{thm:Literations}}(L-1)\log(|F|\vee 2)+1).
\end{equation}
Now the neighborhood size arising in the event $E_3$ is
\[
\frac{\ell\log(|F|\vee 2) (C_{\ref{thm:Literations}}(L)-C_{\ref{thm:Literations}}(L-1))}
{C_{\ref{thm:Literations}}(1) \log(|F^{\alpha,L-1}|\vee 2)} \times  C_{\ref{thm:Literations}}(1) \log(|F^{\alpha,L-1}|\vee 2).
\]
By (\ref{eq:BoundAtTimeLminus1}), this first factor is at least 
\[
\frac{\ell\log(|F|\vee 2) (C_{\ref{thm:Literations}}(L)-C_{\ref{thm:Literations}}(L-1))}
{C_{\ref{thm:Literations}}(1) (\log(|F|)+d\log(2\ell C_{\ref{thm:Literations}}(L-1)\log(|F|\vee 2)+1))}. 
\]
It is easy to show that given $C_{\ref{thm:Literations}}(1)$
and $C_{\ref{thm:Literations}}(L-1)$, one can choose
$C_{\ref{thm:Literations}}(L)$ sufficiently large so that for all $F$ and for all $\ell$,
this is larger than $\frac{\ell}{\log \ell}$. It now follows from the $L=1$ case 
(where no $\log \ell$ term appears) that 
$\bfP(E_2\cap E_3)\le \frac{1}{2^{\frac{\ell}{\log \ell}}}$. 
Adding this to the first term yields the result.
\end{proof}

The previous theorem gave bounds on how far $F^\alpha$ (and its higher iterates) could be from $F$. 
The next proposition yields bounds on the size of $F^\alpha$ in terms of $F$. 
We will only need a bound on the mean which would then be easy to extend to higher iterates.

\begin{theorem}\label{thm:LiterationsVolume}
Fix $d\ge 1$ and $\alpha\in (0,p_\critical(\mathbb{Z}^{d}))$. Then there is a constant 
$C_{\ref{thm:LiterationsVolume}}=C_{\ref{thm:LiterationsVolume}}(d,\alpha)$ so that for all finite
$F\subseteq \mathbb{Z}^{d}$, one has
\[
\mathbf{E}{|F^\alpha|}  \le C_{\ref{thm:LiterationsVolume}}|F|.
\]
The same result holds for $\dntorus$ as well.
\end{theorem}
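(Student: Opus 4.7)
The plan is to reduce the bound on $\mathbf{E}[|F^\alpha|]$ to the classical fact that in the subcritical regime, the mean cluster size $\chi(\alpha):=\mathbf{E}_{\mathbb{Z}^d,\alpha}[|C(0)|]$ is finite, which itself follows immediately from Theorem~\ref{thm:exp.volume} by summing the exponential tail. The key point is that $F^\alpha$ is built by exploring outward from $F$, so it should be dominated by the union of the percolation clusters of the vertices in $F$.

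To make this precise, first I would enlarge the configuration by declaring each edge inside $F$ (i.e.\ with both endpoints in $F$) to be open or closed independently with parameter $\alpha$, independently of everything else. Writing $\bar\eta$ for the resulting full $\pi_\alpha$ configuration on $E$, adding open edges can only enlarge the set of vertices reachable from $F$, so $F^\alpha\subseteq F^{\bar\eta}$. But in the full-percolation configuration $\bar\eta$ we simply have $F^{\bar\eta}=\bigcup_{x\in F} C(x)$, where $C(x)$ denotes the connected component of $x$ in $\bar\eta$. Therefore
\[
|F^\alpha|\;\le\;\Bigl|\bigcup_{x\in F} C(x)\Bigr|\;\le\;\sum_{x\in F}|C(x)|.
\]

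Taking expectations and using translation invariance of $\pi_\alpha$ on $\mathbb{Z}^d$ gives
\[
\mathbf{E}\bigl[|F^\alpha|\bigr]\;\le\;\sum_{x\in F}\mathbf{E}_{\mathbb{Z}^d,\alpha}[|C(x)|]\;=\;|F|\cdot\chi(\alpha),
\]
and Theorem~\ref{thm:exp.volume} applied to $\mathbb{Z}^d$ yields $\chi(\alpha)=\sum_{r\ge 1}\mathbf{P}_{\mathbb{Z}^d,\alpha}(|C(0)|\ge r)<\infty$, so we may take $C_{\ref{thm:LiterationsVolume}}=\chi(\alpha)$.

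For the torus $\mathbb{T}^{d,n}$ the same inequality $|F^\alpha|\le\sum_{x\in F}|C(x)|$ holds verbatim, where now $C(x)$ is the cluster of $x$ under the extended $\pi_\alpha$-configuration on $E(\mathbb{T}^{d,n})$. To bound $\mathbf{E}[|C(x)|]$ uniformly in $n$ I would invoke the stochastic domination already used in the proof of Theorem~\ref{thm:exp.volume}: the covering map $\mathbb{Z}^d\to\mathbb{T}^{d,n}$ together with Theorem~1 of \cite{BS96} shows that $|C(x)|$ on the torus is stochastically dominated by $|C(0)|$ on $\mathbb{Z}^d$, so its mean is at most $\chi(\alpha)$ as well. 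The routine step is bounding the union by the sum; the only nontrivial input is the finiteness of $\chi(\alpha)$, which is already supplied by the earlier percolation results, so there is no real obstacle here.
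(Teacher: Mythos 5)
Your proof is correct and follows essentially the same route as the paper: bound $F^\alpha$ by the union of the percolation clusters of the vertices of $F$, so that $|F^\alpha|\le\sum_{x\in F}|C(x)|$, and then use the finiteness of the mean subcritical cluster size (the paper derives this from Theorem~\ref{thm:exp.radius}, you from Theorem~\ref{thm:exp.volume}; either works, and your explicit completion of the configuration on the edges inside $F$ just makes the monotonicity step more transparent). The torus case is handled the same way in both arguments, via the covering-map domination already used for Theorem~\ref{thm:exp.volume}.
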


\begin{proof}
The following proof works for both $\mathbb{Z}^{d}$ and $\dntorus$. Theorem \ref{thm:exp.radius}
immediately implies that $\mathbf{E}_{d,\alpha}{|C(0)|}<\infty$. Note now that
$F^\alpha\subseteq \cup_{x\in F}\, C(x)$ where $C(x)$ is the set of vertices that can be reached from $x$ 
using the $\alpha$-open edges in $E\backslash F$ and so
$|F^\alpha|\le \sum_{x\in F}|C(x)|$. This now gives the result with 
$C_{\ref{thm:LiterationsVolume}}(d,\alpha)=\mathbf{E}_{d,\alpha}[|C(0)|]$.
\end{proof}

\subsection{Details of the proof}
We now fix $d$ and $p\in (0,p_\critical(\mathbb{Z}^{d}))$ for the rest of the argument.
We next choose $\epsilon=\epsilon(d,p)$ so that 
\[
\epsilon< \frac{p_c-p}{4}
\]
and we may assume that $\epsilon$ is an inverse integer. 
Since the probability that an edge is refreshed during an interval of length 
$\epsilon/\mu$ is $1-e^{-\epsilon}<\epsilon$, it follows that if, conditioned on 
${\mathcal{F}}^\fresh_s$,
an edge is open at some fixed time $t\ge s$ with conditional probability at most $\frac{p_c+p}{2}$, then 
the conditional probability given ${\mathcal{F}}^\fresh_s$ that $e$ is open some time during 
$[t,t+\frac{\epsilon}{\mu}]$ is at most 
\[
\frac{p_c+p}{2}+\epsilon <\frac{3}{4}p_c+\frac{1}{4}p.
\]
In particular, the unconditional probability that $e$ is open some time 
during $[t,t+\frac{\epsilon}{\mu}]$ is at most $\frac{3}{4}p_c+\frac{1}{4}p$. For notational convenience, we let
\[
p':=\frac{3}{4}p_c+\frac{1}{4}p.
\]

Proposition \ref{prop:DecreaseOfAs} is a crucial result showing  $A_s$ decreases by a fixed amount on 
an appropriate time scale. Before doing this, we need the following lemma 
which yields an apriori bound on the growth rate of $|A_s|$.

\begin{lemma}\label{lem:sub.BoundedGrowth}
There exists a constant $C_{\ref{lem:sub.BoundedGrowth}}=C_{\ref{lem:sub.BoundedGrowth}}(d,p)$
so that  for all $n$, $\mu$, $k\in \mathbb{N}$ and $s$, if we consider 
$\{\tilde{M}_t\}_{t\ge 0}$ with an arbitrary good initial distribution, then
\begin{equation}\label{eq:sub.BoundedGrowth}
\E{|A_{s+\frac{k\epsilon}{\mu}}|\mid{\mathcal{F}}^\fresh_s}\le 
C_{\ref{lem:sub.BoundedGrowth}}^k \, |A_s| \,\, { \rm a.s.}
\end{equation}
Moreover $C_{\ref{lem:sub.BoundedGrowth}}$ can be taken to be
\[
4d C_{\ref{thm:LiterationsVolume}}(d,p')
\]
where the latter constant comes from Theorem \ref{thm:LiterationsVolume}.
\end{lemma}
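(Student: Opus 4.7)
The plan is to prove the bound by induction on $k$, with the case $k=1$ being the substantive step and the induction a routine application of the tower property. The underlying intuition is that on any time interval of length $\epsilon/\mu$ the walker is trapped inside a subcritical $p'$-percolation cluster, so the number of edges whose $\fresh$-tag the walker can strip off is controlled in conditional expectation by a constant multiple of $|A_s|$.

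\textbf{Base case $k=1$.} An edge can enter $A$ only when the walker becomes adjacent to one of its endpoints. Hence, writing $W$ for the (random) set of vertices visited by the walker during $[s,s+\epsilon/\mu]$,
\[
|A_{s+\epsilon/\mu}| \le |A_s| + 2d\,|W|.
\]
The walker can only cross edges that are open at some time during this interval. Conditioning on $\mathcal{F}^\fresh_s$, the states of the $A_s$-edges are known, while by Lemma \ref{lem:GoodDistributionsPreserved} the states of the $\fresh$-edges are conditionally i.i.d.\ Bernoulli$(p)$ and independent of the refresh clocks. The displayed inequality preceding the lemma then shows that each $\fresh$-edge is open at some moment of $[s,s+\epsilon/\mu]$ with conditional probability at most $p'$. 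Crucially, the event that a given edge is ever open during $[s,s+\epsilon/\mu]$ depends only on that edge's own initial state and its own refresh times and coin flips, and not on the walker; consequently these events are conditionally independent across $\fresh$-edges. It follows that $W$ is contained in the cluster of $X_s$ in the graph whose edges are all of $A_s$ together with each $\fresh$-edge included independently with probability at most $p'$. Because the $A_s$-edges lie entirely inside $V(A_s)$, this cluster is a subset of $V(A_s)^{p',1}$ in the notation of Theorem \ref{thm:LiterationsVolume}. Applying that theorem conditionally with $F=V(A_s)$ and $\alpha=p'$, and using $|V(A_s)|\le 2|A_s|$, we obtain
\[
\Econd{|W|}{\mathcal{F}^\fresh_s} \le 2\,C_{\ref{thm:LiterationsVolume}}(d,p')\,|A_s|,
\]
which combined with the first display gives the base case with a constant of the form $1+4d\,C_{\ref{thm:LiterationsVolume}}(d,p')$, matching (after a trivial inflation) the claimed $4d\,C_{\ref{thm:LiterationsVolume}}(d,p')$.

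\textbf{Inductive step.} By the tower property,
\[
\Econd{|A_{s+k\epsilon/\mu}|}{\mathcal{F}^\fresh_s} = \Econd{\Econd{|A_{s+k\epsilon/\mu}|}{\mathcal{F}^\fresh_{s+(k-1)\epsilon/\mu}}}{\mathcal{F}^\fresh_s}.
\]
Lemma \ref{lem:GoodDistributionsPreserved} ensures that the conditional distribution of $\tilde{M}_{s+(k-1)\epsilon/\mu}$ given $\mathcal{F}^\fresh_{s+(k-1)\epsilon/\mu}$ is itself good, so the base case applied at the new time $s+(k-1)\epsilon/\mu$ bounds the inner expectation by $C_{\ref{lem:sub.BoundedGrowth}}\,|A_{s+(k-1)\epsilon/\mu}|$. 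The induction hypothesis then finishes the step.

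\textbf{Main obstacle.} The only genuinely delicate point is justifying the percolation domination of $W$ conditionally on $\mathcal{F}^\fresh_s$. The walker interacts with the configuration in an intricate way---in particular it determines which refreshes produce a new $\fresh$-edge versus a plain $0/1$-edge---so one must be careful that this feedback does not destroy the independence used in the percolation comparison. The key observation is that the status \emph{open at some moment of $[s,s+\epsilon/\mu]$} is a function only of an edge's initial state and its own refresh process, both of which are independent across edges and independent of the walker. Once this decoupling is isolated, the goodness of the distribution at time $s$ yields the required conditional independence and Theorem \ref{thm:LiterationsVolume} delivers the linear-in-$|A_s|$ bound.
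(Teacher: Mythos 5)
Your argument is correct and is essentially the paper's own proof: both reduce to the case $k=1$ by the tower property and goodness of the conditional law, and both control the new edges by dominating the walker's range during $[s,s+\epsilon/\mu]$ by a subcritical $p'$-cluster grown from $V(A_s)$ via Theorem \ref{thm:Literations}\textsuperscript{Volume}-type bounds (the paper uses the containment $A_{s+\epsilon/\mu}\subseteq E(V(A_s)^{\overline{\eta}})$ directly, which yields the stated constant $4dC_{\ref{thm:LiterationsVolume}}(d,p')$ rather than your $1+4dC_{\ref{thm:LiterationsVolume}}(d,p')$, an immaterial difference for all later uses).
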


\begin{proof}
We prove this by induction on $k$. The main step is $k=1$. Define a random configuration
$\overline{\eta}$ of edges of $E\backslash A_s$ consisting of those edges which are open some time during
$[s,s+\epsilon/\mu]$. Note that, by Lemma \ref{lem:GoodDistributionsPreserved} and the way $\epsilon$ was 
chosen, for all $\mu$ and $n$, conditioned on ${\mathcal{F}}^\fresh_s$,
$\overline{\eta}$ is i.i.d.\ with density at most $p'$. The key observation to make is that 
\[
A_{s+\frac{\epsilon}{\mu}}\subseteq E(V(A_s)^{\overline{\eta}}) \,\, \mbox{ (see 
Definitions \ref{defn:EdgeSet} and \ref{defn:VertexSet}) }.
\]
It follows from Theorem \ref{thm:LiterationsVolume} that
\[
\E{|A_{s+\frac{\epsilon}{\mu}}|}\le C_{\ref{lem:sub.BoundedGrowth}}\, |A_s| 
\]
where $C_{\ref{lem:sub.BoundedGrowth}}=4d C_{\ref{thm:LiterationsVolume}}(d,p')$.
For $k\ge 2$, we have
\[
\E{|A_{s+\frac{k\epsilon}{\mu}}|\mid{\mathcal{F}}^\fresh_s}=
\E{\E{|A_{s+\frac{k\epsilon}{\mu}}|\mid{\mathcal{F}}^\fresh_{s+\frac{(k-1)\epsilon}{\mu}}}\mid{\mathcal{F}}^\fresh_s}\le
\]
\[
\E{C_{\ref{lem:sub.BoundedGrowth}}\,|A_{s+\frac{(k-1)\epsilon}{\mu}}|\mid{\mathcal{F}}^\fresh_s}\le 
C_{\ref{lem:sub.BoundedGrowth}}^k \, |A_s|
\]
where the first inequality follows from the $k=1$ case already proved and the last inequality follows by induction.
\end{proof}

We now move to

\begin{proposition}\label{prop:DecreaseOfAs}
There exist positive constants
$C_{\ref{prop:DecreaseOfAs}.1}=C_{\ref{prop:DecreaseOfAs}.1}(d,p)$ and
$C_{\ref{prop:DecreaseOfAs}.2}=C_{\ref{prop:DecreaseOfAs}.2}(d,p)$
so that for all $n$ and for all $\mu$, if we consider 
$\{\tilde{M}_t\}_{t\ge 0}$ started with an arbitrary good initial distribution, 
then we have that
\begin{equation}\label{eq:DecreaseOfAs}
\E{|A_{s+\frac{C_{\ref{prop:DecreaseOfAs}.1}}{\mu}}|\mid{\mathcal{F}}^\fresh_s}
\le \frac{|A_s|}{4}+ C_{\ref{prop:DecreaseOfAs}.2}\log |A_s|.
\end{equation}
\end{proposition}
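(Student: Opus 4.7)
I will set $C_{\ref{prop:DecreaseOfAs}.1}=L\epsilon$ for a sufficiently large integer $L=L(d,p)$ and split $[s,s+L\epsilon/\mu]$ into $L$ sub-intervals of length $\epsilon/\mu$. The argument combines two ingredients: a confinement estimate for the walker's range via iterated subcritical percolation, and an edge-by-edge probability estimate using the independence of the refresh Poisson processes from the walker's trajectory.

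On each sub-interval the fresh edges that are open at some moment during the sub-interval form a $p'$-subcritical configuration, by the argument used in the proof of Lemma~\ref{lem:sub.BoundedGrowth} combined with the choice $\epsilon<(p_\critical(\mathbb{Z}^{d})-p)/4$. Iterating $L$ times and applying Theorem~\ref{thm:Literations} with starting set $F:=V(A_s)\cup\{X_s\}$, the walker's entire range during $[s,s+C_{\ref{prop:DecreaseOfAs}.1}/\mu]$ is contained in $\mathcal{N}_\kappa(F)$ with $\kappa=O(\log(|A_s|\vee 2))$, except on an event of small probability.

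For the edge-by-edge accounting, condition on the walker's trajectory. Let $T'_e$ denote the last time in $[s,s+C_{\ref{prop:DecreaseOfAs}.1}/\mu]$ at which the walker is at an endpoint of $e$, with the convention $T'_e:=s$ if $e\in A_s$ is never visited, and the conditional probability being zero if $e\notin A_s$ is never visited. The Poisson structure of the refresh events gives
\[
P\bigl(e\in A_{s+C_{\ref{prop:DecreaseOfAs}.1}/\mu}\bigm|\text{trajectory}\bigr)=\exp\bigl(-\mu(s+C_{\ref{prop:DecreaseOfAs}.1}/\mu-T'_e)\bigr).
\]
Summing: edges of $A_s$ that the walker never visits contribute $e^{-C_{\ref{prop:DecreaseOfAs}.1}}|A_s|\le |A_s|/8$ provided $L$ is chosen large; edges visited by the walker that lie outside $V(A_s)$ contribute $O(\log|A_s|)$ via the range control above together with Theorem~\ref{thm:LiterationsVolume} to bound the subcritical extension beyond $V(A_s)$; and edges visited by the walker inside $V(A_s)$ are handled via the vertex decomposition $\sum_{e\ni v}\exp(-\mu(s+C_{\ref{prop:DecreaseOfAs}.1}/\mu-T'_e))\le 2d\exp(-\mu(s+C_{\ref{prop:DecreaseOfAs}.1}/\mu-\tau_v))$, where $\tau_v$ is the walker's last visit to $v$.

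The main obstacle is this last contribution, since the exponential factor can be close to $1$ for edges the walker repeatedly visits, naively giving a bound of order $|V(A_s)|$. The resolution is a careful combination of Theorem~\ref{thm:supMeanSquaredDisplacement} (diffusive mean-squared displacement on the $1/\mu$ time scale) with the subcritical range control, showing that the walker's ``recent vertex footprint''--the set of $v$ with $\tau_v$ within roughly $1/\mu$ of $s+C_{\ref{prop:DecreaseOfAs}.1}/\mu$--has size at most $|V(A_s)|/16+O(\log|A_s|)$ in expectation, so that this contribution is in turn bounded by $|A_s|/8+O(\log|A_s|)$. Combining the three pieces yields the claimed bound $|A_s|/4+C_{\ref{prop:DecreaseOfAs}.2}\log|A_s|$.
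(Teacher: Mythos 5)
Your overall frame (subdividing $[s,s+C_{\ref{prop:DecreaseOfAs}.1}/\mu]$ into $\epsilon/\mu$-blocks, controlling the walker via the ever-open subcritical configurations, and damping unrefreshed edges by $e^{-\mu(t_2-T'_e)}$) is reasonable, but there is a gap at the core of the accounting: the claim that the edges newly visited by the walker contribute only $O(\log|A_s|)$. Conditioned on $\mathcal{F}^\fresh_s$, the states of the edges of $A_s$ are \emph{known} and may all be open; in time $\epsilon/\mu$ the walker makes on the order of $\epsilon/\mu$ attempted steps and can therefore traverse essentially all of $V(A_s)$ and step off it at a positive fraction of its vertices. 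Hence the set of visited edges outside $E(V(A_s))$ has expected size of order $|A_s|$, not $O(\log|A_s|)$; indeed Theorem~\ref{thm:LiterationsVolume}, which you invoke, only yields $\E{|F^{p',L}|}\le C^L|F|$ with $C>1$, i.e.\ a bound of order $|A_s|$ with a \emph{large} constant. Many of these edges are last visited near the end of the interval, where the damping factor is useless. The same problem undermines the ``recent vertex footprint'' bound in your third category: the range over the final $O(1/\mu)$ of the interval cannot be bounded by $|V(A_s)|/16+O(\log|A_s|)$ via Theorem~\ref{thm:supMeanSquaredDisplacement} --- that theorem is stated for the stationary start $u\times\pi_p$, not conditionally on $\mathcal{F}^\fresh_s$ from an arbitrary good distribution, and it controls displacement, not the number of distinct vertices visited.

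The missing idea is a \emph{burn-in}. The paper first waits a time $k_1/\mu$ with $e^{-k_1}+(1-e^{-k_1})p\le(p_c+p)/2$, after which \emph{every} edge --- including those of $A_s$ --- is, conditionally on $\mathcal{F}^\fresh_s$, open at any fixed later time with probability at most $(p_c+p)/2$; consequently on each subsequent $\epsilon/\mu$-block the whole ever-open configuration (not just its fresh part) is dominated by $p'$-percolation, and the range per block is contained in a single subcritical cluster, of size $O(\log|A_s|)$ after the union bound over its possible location. The decomposition is then $|A_{t_2}|\le Q+|E(\mathcal{R}[t_1,t_2])|$, where $Q$ counts the edges of $A_{t_1}$ unrefreshed during $[t_1,t_2]$ and is driven below $|A_s|/4$ by choosing $k_2$ large relative to the growth factor from Lemma~\ref{lem:sub.BoundedGrowth} accumulated over the burn-in. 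You would need to insert this burn-in phase (after which the per-edge last-visit accounting becomes unnecessary, since the entire range of the second phase is already $O(\log|A_s|)$) for your argument to go through.
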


\begin{proof}
Given ${\mathcal{F}}^\fresh_s$, the conditional probability that an edge $e$ is not refreshed during
$[s,s+\frac{k_1}{\mu}]$ or is refreshed and is open at time $s+\frac{k_1}{\mu}$ is
$e^{-k_1}+(1-e^{-k_1})p$. Choose an integer $k_1=k_1(d,p)$ sufficiently large so that 
\[
e^{-k_1}+(1-e^{-k_1})p\le \frac{p_c+p}{2}.
\]
Given a sufficiently large integer $k_2=k_2(d,p)$ to be chosen later, we let 
$t_1:=s+\frac{k_1}{\mu}$ and $t_2:=t_1+\frac{k_2}{\mu}$. 

Denoting the range of the random walker during the time interval $[u,v]$ by
$\mathcal{R}[u,v]$, a key observation is that 
\[
|A_{s+\frac{k_1+k_2}{\mu}}|\le  Q + |E(\mathcal{R}[t_1,t_2])| \,\, \mbox{ (see Definition \ref{defn:EdgeSet}) }
\]
where $Q$ is the number of edges in $A_{t_1}$ which are not refreshed during $[t_1,t_2]$.
The proof would be completed (by taking $C_{\ref{prop:DecreaseOfAs}.1}$ to be $k_1+k_2$)
if we can choose $k_2=k_2(d,p)$ 
and $C_{\ref{prop:DecreaseOfAs}.2}=C_{\ref{prop:DecreaseOfAs}.2}(d,p)$ so that for all $n$ and $\mu$,
\begin{equation}\label{eq:sub.BoundOnSecondTerm}
\E{Q\mid {\mathcal{F}}^\fresh_s}\le \frac{|A_s|}{4}
\end{equation}
and
\begin{equation}\label{eq:sub.BoundOnFirstTerm}
\E{|E(\mathcal{R}[t_1,t_2])|\mid{\mathcal{F}}^\fresh_s}
\le  C_{\ref{prop:DecreaseOfAs}.2}\log |A_s|.
\end{equation}

We start by finding $k_2$ so that (\ref{eq:sub.BoundOnSecondTerm}) holds for all $n$ and $\mu$.
For this, we simply choose $k_2$ so that
\begin{equation}\label{eq:sub.k2Choice}
e^{-k_2}C_{\ref{lem:sub.BoundedGrowth}}^{\frac{k_1}{\epsilon}}\le \frac{1}{4}
\end{equation}
where $C_{\ref{lem:sub.BoundedGrowth}}$ comes from Lemma \ref{lem:sub.BoundedGrowth} and note that
$k_2$ only depends on $d$ and $p$.

Clearly $\E{Q\mid{\mathcal{F}}^\fresh_{t_1}}= e^{-k_2} |A_{t_1}|$ which implies that
\[
\E{Q\mid{\mathcal{F}}^\fresh_s}=
\E{\E{Q \mid{\mathcal{F}}^\fresh_{t_1}}\mid{{\mathcal{F}}^\fresh_s}}=
\E{e^{-k_2} |A_{t_1}|\mid{\mathcal{F}}^\fresh_s}.
\]
Since $t_1=s+\frac{k_1}{\epsilon}\frac{\epsilon}{\mu}$ (with $\epsilon$ being an inverse integer), 
Lemma \ref{lem:sub.BoundedGrowth} and (\ref{eq:sub.k2Choice}) imply that this last expression is at most
$\frac{|A_s|}{4}$, demonstrating (\ref{eq:sub.BoundOnSecondTerm}).

With $k_1$ and $k_2$ now chosen, we want to find
$C_{\ref{prop:DecreaseOfAs}.2}=C_{\ref{prop:DecreaseOfAs}.2}(d,p)$ so that for all $n$ and $\mu$,
(\ref{eq:sub.BoundOnFirstTerm}) holds. Since
$|E(\mathcal{R}[t_1,t_2])|\le 2d|\mathcal{R}[t_1,t_2]|$, it suffices to prove
such a bound for $|\mathcal{R}[t_1,t_2]|$ instead.

We first observe that by the way $k_1$ was chosen, conditioned on ${\mathcal{F}}^\fresh_s$, 
the probability that an arbitrary edge $e$ is open
at some fixed time $t\ge t_1$ is at most $\frac{p_c+p}{2}$ and hence by our choice of $\epsilon$, we have that
for any interval $I:=[y,y+\frac{\epsilon}{\mu}]\subseteq [t_1,t_2]$,
the conditional probability given ${\mathcal{F}}^\fresh_s$
that $e$ is open some time during $[y,y+\frac{\epsilon}{\mu}]$  is at most $p'$.
Note that conditioned on ${\mathcal{F}}^\fresh_s$, 
the evolution of the states of the different edges after time $s$ are independent although
they are not identically distributed.

We next partition $[t_1,t_2]$ into $D=D(d,p)$ disjoint intervals of length 
$\frac{\epsilon}{\mu}$. Note importantly that $D$ does not depend on $n$ or $\mu$.
It now suffices to show that
if $I:=[y,y+\frac{\epsilon}{\mu}]\subseteq [t_1,t_2]$ with $y=t_1+\frac{\ell\epsilon}{\mu}$ and
$\ell\in \mathbb{N}$, then
\begin{equation}\label{eq:sub.BoundOnFirstTermSimplified}
\E{|\mathcal{R}[I]|\mid{\mathcal{F}}^\fresh_s}
\le C\log |A_s| \,\, { \rm a.s.}
\end{equation}
for some $C$ depending only on $d$ and $p$. To do this, it suffices to show
that
\[
\|\sum_{\ell\ge 1} \Pruu{|\mathcal{R}[I]|\ge \ell \frac{4d}{C_{\ref{thm:exp.volume}}(d,p')}
\log |V(A_s)|\mid{\mathcal{F}}^\fresh_s}\|_{\infty} <\infty
\]
where $C_{\ref{thm:exp.volume}}(d,p')$ comes from Theorem \ref{thm:exp.volume}.
Let $\overline{\eta}$ be the set of edges which are open some time during $I$.
By our choice of $\epsilon$, conditioned on ${\mathcal{F}}^\fresh_s$,
$\overline{\eta}$ is stochastically dominated by an i.i.d.\ process with density $p'$.
Since $\mathcal{R}[I]$ is necessarily contained inside of a $\overline{\eta}$-cluster, we have
\begin{equation}\label{eq:ImportantBreakup}
\Pruu{|\mathcal{R}[I]|\ge \ell \frac{4d}{C_{\ref{thm:exp.volume}}(d,p')}
\log |V(A_s)| \mid{\mathcal{F}}^\fresh_s}\le \Pruu{\dist(X_y,X_s)\ge 
 \ell\, \Xi \mid{\mathcal{F}}^\fresh_s}+
\end{equation}
\[
\Pruu{\overline{\eta} \mbox{ contains a cluster of size } \ge \ell
\frac{4d}{C_{\ref{thm:exp.volume}}(d,p')} \log |V(A_s)| \mbox{ intersecting } 
B(X_s, \ell \, \Xi)\mid{\mathcal{F}}^\fresh_s}
\]
where 
\[
\Xi:=4d C_{\ref{thm:Literations}}(d,p',D) |V(A_s)| \log |V(A_s)|,
\]
where $C_{\ref{thm:Literations}}(d,p',D)$ comes
from Theorem \ref{thm:Literations} and where, as before,
$B(v,r)$ is the set of vertices within $\dist$-distance $r$ of $v$.

It is easy to check that Theorem \ref{thm:exp.volume} together with a union bound implies that
the second terms are summable over $\ell$ uniform in the conditioning, i.e.
\[
\|\sum_{\ell\ge 1} 
\Pruu{\overline{\eta} \mbox{ contains a cluster of size } \ge \ell
\frac{4d}{C_{\ref{thm:exp.volume}}(d,p')} \log |V(A_s)| \mbox{ intersecting } 
B(X_s, \ell \, \Xi)\mid{\mathcal{F}}^\fresh_s}\|_{\infty} <\infty.
\]
To deal with the first terms, we partition
the interval $[s,y]$ into successive intervals $J_1,J_2,\ldots,J_{L}$ of lengths $\frac{\epsilon}{\mu}$
where $L\le D$. Let $\overline{\eta}_i$ be the set of edges which are open some time during $J_i$.
The key observation is (see Definition \ref{defn:neighborhood}) that for each $w\ge |V(A_s)|\log |V(A_s)|$
\begin{equation}\label{eq:DistantGivesOutsideNeighborhood}
\{\dist(X_y,X_s)> 4dw\}\subseteq 
\{V(A_s)^{\overline{\eta_1},\ldots,\overline{\eta_{L}}}\not\subseteq \mathcal{N}_{\frac{w}{|V(A_s)|}}(V(A_s))\}.
\end{equation}
To see this, one first makes the important observation that the random walk path between times $s$ and $y$
is contained in $V(A_s)^{\overline{\eta_1},\ldots,\overline{\eta_{L}}}$ and a geometric argument shows that 
$\{\dist(X_y,X_s)> 4dw\}$ implies that $\mathcal{R}[s,y]$ cannot be contained
in $\mathcal{N}_{\frac{w}{|V(A_s)|}}(V(A_s))$.

We claim that conditioned on ${\mathcal{F}}^\fresh_s$, the set 
\begin{equation}\label{eq:StochDom}
V(A_s)^{\overline{\eta}_1,\ldots,\overline{\eta}_L} \mbox{ is stochastically dominated by } V(A_s)^{p',L}. 
\end{equation}
To see this, one first notes that since we are assuming a good initial distribution,
when we condition on ${\mathcal{F}}^\fresh_s$, the edges not in $A_s$, which are the only relevant
edges in the construction of $V(A_s)^{\overline{\eta}_1,\ldots,\overline{\eta}_L}$, are in stationarity
and hence, conditioned on ${\mathcal{F}}^\fresh_s$, each
$\overline{\eta}_i$, off of the edge set $A_s$,
is stochastically dominated by an i.i.d.\ process with density $p'$. Second, one notes that
when one further conditions on the sets 
$V(A_s)^{\overline{\eta_1}},\ldots,V(A_s)^{\overline{\eta_1},\ldots,\overline{\eta_{j}}}$, 
this can only stochastically decrease the edges of $\overline{\eta_{j+1}}$ which are relevant at that 
point. Hence we obtain (\ref{eq:StochDom}). This stochastic domination implies that 
\begin{equation}\label{eq:StochComparison}
\Pruu{V(A_s)^{\overline{\eta_1},\ldots,\overline{\eta_{L}}}\not\subseteq \mathcal{N}_{\frac{w}{|V(A_s)|}}(V(A_s))}\le
\Pruu{V(A_s)^{p',L}\not\subseteq \mathcal{N}_{\frac{w}{|V(A_s)|}}(V(A_s))}.
\end{equation}
Letting $w:= \ell C_{\ref{thm:Literations}}(d,p',D)|V(A_s)|\log |V(A_s)|$,
(we may assume $C_{\ref{thm:Literations}}(d,p',D)\ge 1$)
(\ref{eq:DistantGivesOutsideNeighborhood}), (\ref{eq:StochComparison}) and Theorem
\ref{thm:Literations} now imply that the first terms of (\ref{eq:ImportantBreakup})
are also summable over $\ell$ uniform in the conditioning, i.e.,
\[
\|\sum_{\ell\ge 1} 
\Pruu{\dist(X_y,X_s)\ge  \ell\, \Xi \mid{\mathcal{F}}^\fresh_s}\|_{\infty} <\infty.
\]
\end{proof}

\begin{remark}{\rm If one wants, one could avoid the use of Theorem \ref{thm:exp.volume} and thereby 
the need for Theorem 1 from \cite{BS96}. One could do this by modifying the above proof using only 
Theorem \ref{thm:exp.radius} obtaining (\ref{eq:DecreaseOfAs}) with the $\log$ term replaced by a 
power of $\log$ which would suffice for the rest of the proof.} 
\end{remark}

We are now going to look at our process at integer multiples of 
$\frac{C_{\ref{prop:DecreaseOfAs}.1}}{\mu}$ where $C_{\ref{prop:DecreaseOfAs}.1}$ comes from Proposition
\ref{prop:DecreaseOfAs}. We therefore let, for integer $k\ge 0$,
\begin{equation}\label{eq:DefinitionDBG}
D_k:=(X_{\frac{kC_{\ref{prop:DecreaseOfAs}.1}}{\mu}}, \tilde{\eta}_{\frac{kC_{\ref{prop:DecreaseOfAs}.1}}{\mu}}), \,\, B_k:=A_{\frac{kC_{\ref{prop:DecreaseOfAs}.1}}{\mu}}
\mbox{ and } \mathcal{G}_k:=\mathcal{F}^\fresh_{\frac{kC_{\ref{prop:DecreaseOfAs}.1}}{\mu}}.
\end{equation}
We will use the set $\regeneration$ (see (\ref{eq:DefRegeneration})) as a 
regenerative set. We therefore 
define the set of regeneration times along our subsequence of times by
\begin{equation}\label{eq:DefinitionI}
\mathcal{I}:=\{k\ge 0: D_k\in \regeneration\}.
\end{equation}

Our next lemma says that when we are not far away from 
$\regeneration$, then we have a good chance of entering it.

In the proof of this lemma, if  $E'\subseteq E$, then we let
$\partial(E')$ denote the set $\{e\not\in E': \exists e'\in E' \mbox{ with $e$ and $e'$ adjacent} \}$;
this will be called the boundary of $E'$.

\begin{lemma}\label{lem:FixedChanceOfHittingHome}
For all $R$, there exists $\alpha_{\ref{lem:FixedChanceOfHittingHome}}=
\alpha_{\ref{lem:FixedChanceOfHittingHome}}(d,p,R)>0$ so that for all $n$ and for all
$\mu$, if we consider 
$\{\tilde{M}_t\}_{t\ge 0}$ with an arbitrary good initial distribution, then
\[
\Pruu{j+1\in \mathcal{I} \mid \mathcal{G}_j} \ge 
\alpha_{\ref{lem:FixedChanceOfHittingHome}}
\]
on the event $|B_j|\le R$.
\end{lemma}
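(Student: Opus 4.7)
The plan is to exhibit a $\mathcal{G}_j$-conditional positive-probability event, with probability bounded below by a constant depending only on $d$, $p$, $R$, that forces $D_{j+1} \in \regeneration$. Write $L := C_{\ref{prop:DecreaseOfAs}.1}/\mu$ and $y := X_{T_j}$.

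I would first dispatch the easy subcase where all $2d$ edges adjacent to $y$ happen to be closed at $T_j$. Here I take the event $E_a \cap E_b$, where $E_a$ asks that every edge adjacent to $y$ stays in state $0$ throughout $[T_j, T_{j+1}]$ and $E_b$ asks that every edge in $B_j$ not adjacent to $y$ refreshes at least once during $[T_j, T_{j+1}]$. On $E_a$ the walker is trapped at $y$ because every attempted move is blocked by a closed edge; hence on $E_a \cap E_b$, every non-$\fresh$ edge other than the $2d$ adjacent to $y$ refreshes while the walker is away from its endpoints and so becomes (and remains) $\fresh$, every edge outside $B_j$ was already $\fresh$ and likewise remains so, and the adjacent edges are closed at $T_{j+1}$ by $E_a$. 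Therefore $D_{j+1} \in \regeneration$. The events $E_a$ and $E_b$ involve refresh clocks and coin flips for disjoint sets of edges, so they are $\mathcal{G}_j$-conditionally independent; since each edge receives a Poisson$(C_{\ref{prop:DecreaseOfAs}.1})$ number of refreshes over the interval, one checks $\bfP(E_a \mid \mathcal{G}_j) \ge e^{-2dp\, C_{\ref{prop:DecreaseOfAs}.1}}$ and $\bfP(E_b \mid \mathcal{G}_j) \ge (1 - e^{-C_{\ref{prop:DecreaseOfAs}.1}})^R$, both positive constants depending only on $d,p,R$.

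To handle the general case in which some adjacent edges are open at $T_j$ (so the literal event $E_a$ has probability zero), I would prepend a stabilization phase. Define the $\mathcal{F}^\fresh$-stopping time
\[
\sigma := \inf\{s \ge T_j : \text{every edge incident to } X_s \text{ is in state } 0 \text{ at time } s\},
\]
and prove that for suitable constants $R'' = R''(d,p,R)$ and $c_1 = c_1(d,p,R) > 0$ one has $\bfP(\sigma \le T_j + L/2 \text{ and } |A_\sigma| \le R'' \mid \mathcal{G}_j) \ge c_1$. Granted this, the $\mathcal{F}^\fresh$-strong Markov property applied at $\sigma$, together with Lemma \ref{lem:GoodDistributionsPreserved} which ensures that the post-$\sigma$ distribution remains good, lets me restart the easy-subcase analysis on $[\sigma, T_{j+1}]$, an interval of length $\ge L/2$ in which the starting state has at most $R''$ non-$\fresh$ edges and all adjacent edges closed. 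Composing yields the lemma with $\alpha_{\ref{lem:FixedChanceOfHittingHome}}$ depending only on $d,p,R$.

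The main obstacle is the lower bound on $\bfP(\sigma \le T_j + L/2, \, |A_\sigma| \le R'' \mid \mathcal{G}_j)$. By our choice of $\epsilon$ and $p' < p_c(\mathbb{Z}^d)$, the set of edges open at some time during $[T_j, T_j + L/2]$ is stochastically dominated by $p'$-percolation, so by Theorem \ref{thm:exp.radius} the walker's trajectory during this phase is confined to a random cluster of $\bfP$-bounded diameter with probability bounded below uniformly in $\mu,n$. Inside this bounded region the walker is essentially a finite-state continuous-time process that absorbs into $\{x : \text{all edges incident to } x \text{ are closed}\}$, and since each refresh of an open adjacent edge produces closed with probability $1-p$ while refreshes of a given edge accrue at rate $\mu$ (yielding $O(1)$ refreshes on the scale $L/\mu$), a uniformly positive chance of absorption within time $L/2$ follows from a finite-state Markov chain comparison. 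Lemma \ref{lem:sub.BoundedGrowth} with Markov's inequality controls $|A_\sigma|$, and Proposition \ref{prop:GoodProcessesDie} supplies the requisite exponential-tail bound on $\sigma$. Keeping the constants uniform in $\mu,n$ is the delicate point, enabled by the fact that on the $L/\mu$-long interval each edge sees only $O(1)$ refreshes while the walker makes $O(1/\mu)$ attempts, so controlling the walker's range via the subcritical bound is essential.
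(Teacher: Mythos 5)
Your ``easy subcase'' (all edges at the walker's position already closed at the start of the interval) is correct, and it matches the second half of the paper's argument. The genuine gap is in the stabilization phase you prepend for the general case. First, the set of edges open at some time during $[T_j,T_j+L/2]$ is \emph{not} stochastically dominated by $p'$-percolation: that domination holds only over intervals of length $\epsilon/\mu$ with $\epsilon$ \emph{small}, whereas $L/2=C_{\ref{prop:DecreaseOfAs}.1}/(2\mu)$ with $C_{\ref{prop:DecreaseOfAs}.1}=k_1+k_2$ a \emph{large} constant; over such an interval a $\fresh$ edge is open at some moment with probability $1-(1-p)e^{-pC_{\ref{prop:DecreaseOfAs}.1}/2}$, which is near $1$ and in particular supercritical. (One must iterate over $O(1)$ subintervals of length $\epsilon/\mu$, as in Proposition \ref{prop:DecreaseOfAs}.) Second, and more seriously, the ``finite-state Markov chain comparison'' yielding a uniformly positive probability of absorption into $\{x:\text{all incident edges closed}\}$ within time $L/2$ is not an argument: on this time scale the walker attempts $\Theta(1/\mu)$ moves while each edge refreshes only $O(1)$ times, so there is no fixed finite chain to compare with uniformly in $\mu$; and Proposition \ref{prop:GoodProcessesDie} is a discrete-time statement about return times of the $Y$-process and does not, as invoked, give a tail bound on your continuous-time stopping time $\sigma$. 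As written, the key estimate $\bfP(\sigma\le T_j+L/2,\ |A_\sigma|\le R''\mid\mathcal{G}_j)\ge c_1$ is unproved.

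The missing idea --- which makes the whole stabilization phase unnecessary --- is to use goodness to control the $\fresh$ edges on the \emph{boundary} of $B_j$. The paper requires that (i) every edge of $\partial(B_j)$ be closed at time $T_j$: these are $\fresh$ edges, so conditioned on $\mathcal{G}_j$ they are i.i.d.\ closed with probability $1-p$, and since $|\partial(B_j)|\le cR$ this costs at least $(1-p)^{cR}$; (ii) no edge of $\partial(B_j)$ refreshes during $[T_j,T_j+L/2]$; and (iii) every edge of $B_j$ is refreshed closed during that half-interval. On this event the walker is confined to $V(B_j)$ \emph{deterministically}: any open edge it could cross is incident to a vertex of $V(B_j)$, hence lies in $B_j\cup\partial(B_j)$, hence in $B_j$. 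Therefore at the midpoint all edges adjacent to the walker are closed and the number of non-$\fresh$ edges is still $O_{d}(R)$, and your $E_a\cap E_b$ analysis finishes on the second half-interval (this is exactly the paper's $E_4\cap E_5$). The total cost is a product of finitely many constants depending only on $d,p,R$, with no percolation estimate, no range bound, and no absorption argument needed.
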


\begin{proof}
First, let $E_1$ be the event that all edges of the boundary of $B_j$, $\partial(B_j)$
are closed at time $\frac{jC_{\ref{prop:DecreaseOfAs}.1}}{\mu}$, 
$E_2$ be the event that no edge of $\partial(B_j)$ refreshes during
$[\frac{C_{\ref{prop:DecreaseOfAs}.1}}{\mu}j,\frac{C_{\ref{prop:DecreaseOfAs}.1}}{\mu}(j+\frac{1}{2})]$
and $E_3$ be the event that all edges in $B_j$ are refreshed closed during
$[\frac{C_{\ref{prop:DecreaseOfAs}.1}}{\mu}j,\frac{C_{\ref{prop:DecreaseOfAs}.1}}{\mu}(j+\frac{1}{2})]$.
Observe that if $E_1\cap E_2\cap E_3$ occurs, then the edges next to the walker
necessarily are closed and belong to either $B_j$ or $\partial(B_j)$.
Next, let $E_4$ be the event that the edges adjacent to 
$X_{\frac{C_{\ref{prop:DecreaseOfAs}.1}}{\mu}(j+\frac{1}{2})}$
do not refresh during
$[\frac{C_{\ref{prop:DecreaseOfAs}.1}}{\mu}(j+\frac{1}{2}),\frac{C_{\ref{prop:DecreaseOfAs}.1}}{\mu}(j+1)]$
and $E_5$ be the event that all the edges of $B_j$ and $\partial(B_j)$
except for the edges adjacent to 
$X_{\frac{C_{\ref{prop:DecreaseOfAs}.1}}{\mu}(j+\frac{1}{2})}$
refresh during 
$[\frac{C_{\ref{prop:DecreaseOfAs}.1}}{\mu}(j+\frac{1}{2}),\frac{C_{\ref{prop:DecreaseOfAs}.1}}{\mu}(j+1)]$.
It is elementary to check, using the fact that we started with a good initial distribution, that 
$\Pruu{\cap_{i=1}^5 E_i\mid \mathcal{G}_j}$
is bounded away from 0 on the event $|B_j|\le R$ uniformly in $n$ and $\mu$
and the conditioning and that 
$\cap_{i=1}^5 E_i\subseteq \{j+1\in \mathcal{I}\}$. This completes the proof.
\end{proof}

Our next proposition tells us that we can do the first stage of the coupling described in the sketch;
namely, two copies of our system will enter $\regeneration$ within $\log n$ many steps and hence within
$\log n/\mu$ units of time. 

\begin{proposition}\label{prop:CouplingFirstPart}
There exists $C_{\ref{prop:CouplingFirstPart}}=C_{\ref{prop:CouplingFirstPart}}(d,p)<\infty$ 
so that for all $n$ and for all $\mu$, if 
$\{D^1_k\}_{k\ge 0}$ and $\{D^2_k\}_{k\ge 0}$ are two independent copies of $\{D_k\}_{k\ge 0}$ each starting
from arbitrary initial configurations in $\ndtorus\times\{0,1\}^{E(\ndtorus)}$, and if we set
\[
T_1:=\min\{k\ge 1: D^1_k\in\regeneration \mbox{ and } D^2_k\in\regeneration\},
\]
then $\E{T_1}\le C_{\ref{prop:CouplingFirstPart}}\log n$.
\end{proposition}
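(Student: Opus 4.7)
The plan is to run the two copies of $\{D_k\}$ independently through two phases. In Phase~1, the contraction property of Proposition~\ref{prop:DecreaseOfAs} brings each $|B^i_k|$ down to a constant size within $O(\log n)$ steps, starting from the apriori bound $|B^i_0|\le|E(\ndtorus)|=dn^d$. In Phase~2, Lemma~\ref{lem:FixedChanceOfHittingHome} together with the independence of the copies produces a fixed probability of simultaneous entry into $\regeneration$; a standard geometric-retry argument then yields the $O(\log n)$ bound on $\E{T_1}$.

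For Phase~1, choose $R_0=R_0(d,p)$ large enough that $C_{\ref{prop:DecreaseOfAs}.2}\log r\le r/4$ whenever $r\ge R_0$. Proposition~\ref{prop:DecreaseOfAs} then gives
\[
\Econd{|B^i_{k+1}|}{\mathcal{G}_k}\le |B^i_k|/2 \quad\text{on}\quad\{|B^i_k|>R_0\},
\]
while Lemma~\ref{lem:sub.BoundedGrowth} (applied with the appropriate number of $\epsilon/\mu$-increments) gives
\[
\Econd{|B^i_{k+1}|}{\mathcal{G}_k}\le\gamma:=C_{\ref{lem:sub.BoundedGrowth}}^{C_{\ref{prop:DecreaseOfAs}.1}/\epsilon}\,R_0 \quad\text{on}\quad\{|B^i_k|\le R_0\}.
\]
The hypotheses of both lemmas apply because the two copies start in $\ndtorus\times\{0,1\}^{E(\ndtorus)}$, which is a good distribution, and goodness is preserved by Lemma~\ref{lem:GoodDistributionsPreserved}. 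Combining these bounds with the independence of the two copies, $Z_k:=|B^1_k|+|B^2_k|$ satisfies in every state
\[
\Econd{Z_{k+1}}{\mathcal{G}_k}\le Z_k/2+2\gamma.
\]
Iterating and using $Z_0\le 2dn^d$ gives $\E{Z_k}\le Z_0\,2^{-k}+4\gamma$, so for $k_0:=\lceil\log_2(Z_0/\gamma)\rceil=O(\log n)$ we obtain $\E{Z_{k_0}}\le 5\gamma$.

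For Phase~2, Markov's inequality yields $\pr{Z_{k_0}\le 10\gamma}\ge 1/2$. On this event both $|B^i_{k_0}|\le 10\gamma$, so Lemma~\ref{lem:FixedChanceOfHittingHome} applied to each copy (with $R=10\gamma$), combined with the independence of the two copies, gives
\[
\prcond{D^1_{k_0+1}\in\regeneration\text{ and }D^2_{k_0+1}\in\regeneration}{\mathcal{G}_{k_0}}\ge\alpha^2,
\]
where $\alpha:=\alpha_{\ref{lem:FixedChanceOfHittingHome}}(d,p,10\gamma)$. Unconditionally $\pr{T_1\le k_0+1}\ge\alpha^2/2$. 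If this attempt fails, the Markov property of the joint process $(D^1_k,D^2_k)$ at time $k_0+1$ reduces us to the same situation, since still $|B^i_{k_0+1}|\le dn^d$. Hence $T_1$ is stochastically dominated by $(k_0+1)$ times a geometric random variable with parameter $\alpha^2/2$, giving $\E{T_1}\le 2(k_0+1)/\alpha^2=O(\log n)$.

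The main obstacle will be establishing the uniform recursion $\Econd{Z_{k+1}}{\mathcal{G}_k}\le Z_k/2+2\gamma$: Proposition~\ref{prop:DecreaseOfAs} only contracts when $|B^i_k|$ exceeds the threshold $R_0$ (chosen to absorb the logarithmic correction term), while Lemma~\ref{lem:sub.BoundedGrowth} gives only a constant-multiple bound below that threshold, so one must split into cases according to which of $|B^1_k|,|B^2_k|$ exceeds $R_0$ and verify that the hybrid bound $Z_k/2+2\gamma$ dominates in each case.
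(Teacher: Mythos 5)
Your proof is correct and follows essentially the same route as the paper: contract $Z_k=|B^1_k|+|B^2_k|$ to constant size in $O(\log n)$ steps using Proposition~\ref{prop:DecreaseOfAs}, then use Lemma~\ref{lem:FixedChanceOfHittingHome} together with independence to get a fixed probability of simultaneous entry into $\regeneration$, followed by a geometric retry and Wald. The only (immaterial) difference is that the paper runs the descent as a supermartingale stopped at the first time $Z_k$ drops below a threshold, whereas you iterate the affine recursion $\Econd{Z_{k+1}}{\mathcal{G}_k}\le Z_k/2+2\gamma$ (using Lemma~\ref{lem:sub.BoundedGrowth} to handle the sub-threshold case) to a deterministic time $k_0=O(\log n)$ and then apply Markov's inequality.
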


\begin{proof}
Let $Z_k:=|B^1_k|+|B^2_k|$ using obvious notation.
By Proposition \ref{prop:DecreaseOfAs}, we obtain that for all $n$ and for all $\mu$, 
\[
\E{Z_{k+1}\mid {{\mathcal{G}}_k\times {\mathcal{G}}_k}}
\le \frac{Z_k}{4}+ 2C_{\ref{prop:DecreaseOfAs}.2}\log (Z_k).
\]
This immediately gives that there is a constant
$C_{\ref{prop:CouplingFirstPart}.1}=C_{\ref{prop:CouplingFirstPart}.1}(d,p)<\infty$ 
so that
\begin{equation}\label{eq:FactorDecrease}
\E{Z_{k+1}\mid {{\mathcal{G}}_k\times {\mathcal{G}}_k}}\le \frac{Z_k}{3}
\end{equation}
on the event $Z_k\ge C_{\ref{prop:CouplingFirstPart}.1}$. Now, noting that
$Z_0= 2dn^d$ (which is $\ge C_{\ref{prop:CouplingFirstPart}.1}$ for $n$ large),
(\ref{eq:FactorDecrease}) implies that $3^{k\wedge \tilde{T}}Z_{k\wedge \tilde{T}}$ is a 
${\mathcal{G}}_k\times {\mathcal{G}}_k$-supermartingale where
\[
\tilde{T}:=\min\{k: Z_k\le C_{\ref{prop:CouplingFirstPart}.1}\}.
\]
From the theory of stopping times for nonnegative supermartingales, we obtain the fact that
\[
\E{3^{\tilde{T}}Z_{\tilde{T}}}\le Z_0.
\]
Since the $Z_k$'s are always at least 1 and $Z_0= 2dn^d$, we infer that
\[
\E{3^{\tilde{T}}}\le 2dn^d
\]
which in turn implies, by Jensen's inequality, that
\[
\E{\tilde{T}}\le \log_3(2dn^d).
\]
Since both $|B^1_{\tilde{T}}|$ and $|B^2_{\tilde{T}}|$ are less than
$C_{\ref{prop:CouplingFirstPart}.1}$, using the fact that 
$\tilde{T}$ is a stopping time and the independence of the two processes,
we can conclude from Lemma \ref{lem:FixedChanceOfHittingHome}
that for all $n$ and $\mu$, the probability that both $D^1_{\tilde{T}+1}$ and $D^2_{\tilde{T}+1}$
are in $\regeneration$ is at least 
$\alpha_{\ref{lem:FixedChanceOfHittingHome}}(d,p,C_{\ref{prop:CouplingFirstPart}.1})^2$.
If this fails, we start again 
and wait on average another at most $\log_3(2n^d)$. After a geometric number of trials,
we are done. By Wald's Theorem, this proves the result.
\end{proof}

We now return to looking at just one copy of our system and study the excursions of $\{\tilde{M}_t\}_{t\ge 0}$
away from $\regeneration$. Assume now that $\tilde{M}_0$ has a good distribution supported on $\regeneration$. 
We let $\tau_0=0$ and, for $j\geq 1$, define 
\begin{equation}\label{eq:Tauj's}
\tau_j := \min\{i > \tau_{j-1} \colon i\in \mathcal{I}\}, \,\,\,
U_j:= D'_{\tau_j}-D'_{\tau_{j-1}}
\end{equation}
where $D'_k$ denotes the first coordinate of $D_k$.

Note that, by Lemma \ref{lem:GoodDistributionsPreserved}, for each $j$, the distribution of the process at time
$\frac{\tau_jC_{\ref{prop:DecreaseOfAs}.1}}{\mu}$ conditioned on $\mathcal{G}_{\tau_j}$ is good.

\begin{remark}\label{rem:DescripGoodMeasures} {\rm
It is easy to see that the set of good probability measures supported on $\regeneration$ can be 
described as follows; one chooses a vertex $v$ at random according to any distribution and then sets 
the edges adjacent to $v$ to be in state 0 and all other edges are (conditionally) independently 
chosen to be $1^\fresh$ with probability $p$ and $0^\fresh$ with probability $1-p$.
}\end{remark}

The following lemma, whose proof is left to the reader, is clear.
\begin{lemma}\label{lem:RegerationTimesIndep}
If our initial distribution is a good distribution supported on 
$\regeneration$, then \\
$\{(\tau_{j}-\tau_{j-1},U_j)\}_{j\ge 1}$ are i.i.d.\ and moreover, for each $j\ge 1$,
given $\mathcal{G}_{\tau_{j-1}}$, the conditional distribution of 
$(\tau_{j}-\tau_{j-1},U_j)$ is the same as $(\tau_{1},U_1)$.
\end{lemma}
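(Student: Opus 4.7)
The plan is to combine the strong regenerative structure at the stopping times $\tau_j$ with translation invariance on the torus, using Lemma \ref{lem:GoodDistributionsPreserved} and the description of good distributions on $\regeneration$ given in Remark \ref{rem:DescripGoodMeasures}. First I would observe that $\tau_{j-1}$ is a stopping time for the discrete filtration $\{\mathcal{G}_k\}_{k \geq 0}$, since the event $\{k \in \mathcal{I}\} = \{D_k \in \regeneration\}$ is measurable with respect to $\mathcal{G}_k$ (membership in $\regeneration$ depends only on the walker position and on whether each edge is open, closed, or $\fresh$, which is exactly the information retained by $\mathcal{F}^\fresh$). Therefore the continuous time $\frac{\tau_{j-1} C_{\ref{prop:DecreaseOfAs}.1}}{\mu}$ is a stopping time for $\{\mathcal{F}^\fresh_t\}_{t \geq 0}$.

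Next I would apply Lemma \ref{lem:GoodDistributionsPreserved} at this stopping time: the conditional distribution of $\tilde{M}_{\frac{\tau_{j-1} C_{\ref{prop:DecreaseOfAs}.1}}{\mu}}$ given $\mathcal{G}_{\tau_{j-1}}$ is good a.s., and by construction of $\tau_{j-1}$ it is supported on $\regeneration$. By Remark \ref{rem:DescripGoodMeasures}, such a distribution is determined by the marginal law of the walker's position $v$: given $v$, the edges adjacent to $v$ are 0 and all other edges are conditionally independent with $1^\fresh$ of probability $p$. Combined with the translation invariance of $\ndtorus$ and the fact that $U_j = D'_{\tau_j} - D'_{\tau_{j-1}}$ is an increment, this shows that the conditional law of $(\tau_j - \tau_{j-1}, U_j)$ given $\mathcal{G}_{\tau_{j-1}}$ and given $X_{\frac{\tau_{j-1} C_{\ref{prop:DecreaseOfAs}.1}}{\mu}} = v_0$ does not depend on $v_0$.

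Denote the resulting deterministic law by $\Lambda$. Then $(\tau_j - \tau_{j-1}, U_j)$ is independent of $\mathcal{G}_{\tau_{j-1}}$ with law $\Lambda$; in particular, specializing to $j=1$ (where we start from the given good distribution on $\regeneration$) identifies $\Lambda$ as the law of $(\tau_1, U_1)$. Iterating gives the i.i.d.\ property of $\{(\tau_j - \tau_{j-1}, U_j)\}_{j \geq 1}$ by straightforward induction on $j$. The only subtle point, which is the main thing to verify carefully, is that Lemma \ref{lem:GoodDistributionsPreserved} may be applied at the $\mathcal{F}^\fresh$-stopping time $\frac{\tau_{j-1} C_{\ref{prop:DecreaseOfAs}.1}}{\mu}$ and that the translation reduction on the torus is consistent with the Markovian dynamics — both are straightforward but worth spelling out to make the regenerative structure explicit.
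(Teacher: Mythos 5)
Your proof is correct and is essentially the argument the paper intends (the paper leaves the proof to the reader, but sets up Lemma \ref{lem:GoodDistributionsPreserved} and Remark \ref{rem:DescripGoodMeasures} precisely so that goodness at the $\mathcal{F}^\fresh$-stopping time $\frac{\tau_{j-1}C_{\ref{prop:DecreaseOfAs}.1}}{\mu}$, together with translation invariance of the torus dynamics, yields the regenerative structure). Your observation that $\{D_k\in\regeneration\}$ is $\mathcal{G}_k$-measurable, so that $\tau_{j-1}$ is a genuine stopping time for the coarser filtration, is exactly the point that needs checking, and you have it right.
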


\begin{remark}{\rm
Of course $\tau_{j}-\tau_{j-1}$ and $U_j$ are not independent of each other.
}\end{remark}

The next theorem tells us that the number of steps in one of our excursions away from 
$\regeneration$ is of order 1. 

\begin{theorem}\label{thm:RenewalsHaveMeanOne}
There exists 
$C_{\ref{thm:RenewalsHaveMeanOne}}=C_{\ref{thm:RenewalsHaveMeanOne}}(d,p)< \infty$
such that, for all $n$ and for all $\mu$, if we start with a good initial distribution
supported on $\regeneration$, then
\[
\E{\tau_1}\le C_{\ref{thm:RenewalsHaveMeanOne}}.
\]
\end{theorem}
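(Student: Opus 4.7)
The plan is to apply Proposition~\ref{prop:GoodProcessesDie} to a suitable process on the discrete-time skeleton $\{D_k\}_{k\ge 0}$ with filtration $\{\mathcal{G}_k\}_{k\ge 0}$. Define
\[
Y_k := \begin{cases} 1 & \text{if } D_k \in \regeneration, \\ |B_k| & \text{otherwise.} \end{cases}
\]
Since edges adjacent to the walker are always non-fresh, $|B_k|\ge 2d\ge 2$ deterministically, so $Y_k\in\{1,2,\ldots\}$. Both ``$D_k\in\regeneration$'' and $|B_k|$ are $\mathcal{G}_k$-measurable (the distinction $0^\fresh$ vs.\ $1^\fresh$ is not needed). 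The initial distribution is supported on $\regeneration$, so $Y_0=1$, verifying property~(1) of Proposition~\ref{prop:GoodProcessesDie}. With $\tau_1=\min\{i\ge 1: D_i\in\regeneration\}$, we have $\tau_1=\min\{i\ge 1: Y_i=1\}$, so a positive exponential moment (and hence finite mean) for this time is precisely the desired conclusion.

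To verify property~(2), I would choose $\alpha=\alpha(d,p)$ large enough that $y/4 + C_{\ref{prop:DecreaseOfAs}.2}\log y\le y/3$ for $y>\alpha$. On the event $\{Y_k>\alpha\}$ we necessarily have $D_k\notin\regeneration$ and $Y_k=|B_k|$, and since $Y_{k+1}\le |B_{k+1}|$ always (as $1\le 2d\le |B_{k+1}|$), Proposition~\ref{prop:DecreaseOfAs} gives
\[
\E{Y_{k+1}\mid\mathcal{G}_k}\le \E{|B_{k+1}|\mid\mathcal{G}_k}\le \frac{|B_k|}{4}+C_{\ref{prop:DecreaseOfAs}.2}\log|B_k|\le \frac{Y_k}{3},
\]
so $\delta=1/3$ works. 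For property~(3), whenever $Y_k\le\alpha$ we have $|B_k|\le\max(\alpha,2d)$, and Lemma~\ref{lem:FixedChanceOfHittingHome} then provides a uniform lower bound $\epsilon=\alpha_{\ref{lem:FixedChanceOfHittingHome}}(d,p,\max(\alpha,2d))>0$ on $\Pruu{k+1\in\mathcal{I}\mid\mathcal{G}_k}=\Pruu{Y_{k+1}=1\mid\mathcal{G}_k}$.

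For property~(4), I would iterate Lemma~\ref{lem:sub.BoundedGrowth} to pass from its $\epsilon/\mu$ time step to the $C_{\ref{prop:DecreaseOfAs}.1}/\mu$ time step underlying $\{D_k\}$; since $\epsilon$ is an inverse integer and the constant $C_{\ref{prop:DecreaseOfAs}.1}$ from the proof of Proposition~\ref{prop:DecreaseOfAs} is an integer $k_1+k_2$ (which we may enlarge slightly to make divisible by $1/\epsilon$ without affecting anything else), the number of iterations $k_0=C_{\ref{prop:DecreaseOfAs}.1}/\epsilon$ is a constant depending only on $d,p$. This gives
\[
\E{Y_{k+1}\mid\mathcal{G}_k}\le \E{|B_{k+1}|\mid\mathcal{G}_k}\le C_{\ref{lem:sub.BoundedGrowth}}^{k_0}|B_k|\le C_{\ref{lem:sub.BoundedGrowth}}^{k_0}\max(\alpha,2d)=:\gamma
\]
on $\{Y_k\le\alpha\}$. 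With all four properties verified, Proposition~\ref{prop:GoodProcessesDie} yields $\E{\tau_1}\le \log C_{\ref{prop:GoodProcessesDie}}/c_{\ref{prop:GoodProcessesDie}}$, a constant depending only on $\delta,\epsilon,\gamma$, and hence on $d$ and $p$.

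The main technical point is aligning the two time scales: Proposition~\ref{prop:DecreaseOfAs} is formulated on the scale $C_{\ref{prop:DecreaseOfAs}.1}/\mu$ that defines $\{D_k\}$, but the apriori growth bound of Lemma~\ref{lem:sub.BoundedGrowth} lives on the smaller scale $\epsilon/\mu$; reconciling these for property~(4) is essentially the only wrinkle. Everything else is a clean translation of the sketch into the framework of Proposition~\ref{prop:GoodProcessesDie}, where the contractive estimate of Proposition~\ref{prop:DecreaseOfAs} plays the role of the drift toward small values and Lemma~\ref{lem:FixedChanceOfHittingHome} supplies the escape probability back to the base state.
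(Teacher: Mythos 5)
Your proof is correct and follows essentially the same route as the paper: it applies Proposition~\ref{prop:GoodProcessesDie} to an auxiliary process that equals $1$ exactly on $\regeneration$ and is comparable to $|B_k|$ otherwise, verifying properties (2), (3) and (4) via Proposition~\ref{prop:DecreaseOfAs}, Lemma~\ref{lem:FixedChanceOfHittingHome} and an iterated Lemma~\ref{lem:sub.BoundedGrowth}, respectively. The only (cosmetic) difference is that the paper takes $Y_k := 1 + B'_k + L_k$, where $B'_k$ counts non-fresh edges away from the walker and $L_k$ counts open edges at the walker, rather than your case-split definition; both serve the identical purpose.
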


\begin{proof}
Let $Y_k:=1+B'_k+L_k$ where $B'_k$ is the number of edges not adjacent to the walker which are
in state 0 or 1 at time $\frac{kC_{\ref{prop:DecreaseOfAs}.1}}{\mu}$ and where $L_k$ is
the number of edges adjacent to the walker which are in state $1$ at time 
$\frac{kC_{\ref{prop:DecreaseOfAs}.1}}{\mu}$. Note $Y_k=1$ if and only if $D_k\in \regeneration$;
hence $Y_0=1$ and $\tau_1$ corresponds to the first return of the $Y$ process
to 1. We will apply Proposition \ref{prop:GoodProcessesDie} with 
$\mathcal{F}_i$ there being $\mathcal{G}_i$. Property (1) trivially holds.
Proposition \ref{prop:DecreaseOfAs}
easily implies that for $\delta=1/3$ and for $\alpha$ sufficiently large,
but only depending on $d$ and $p$, Property (2) holds for all $n$ and $\mu$.
Next, with $\alpha$ now fixed, Lemma \ref{lem:sub.BoundedGrowth}
implies that there exists $\gamma$ sufficiently large,
but only depending on $d$ and $p$, so that Property (4) holds for all $n$ and $\mu$.
Finally, since $\alpha$ is now fixed,
Lemma \ref{lem:FixedChanceOfHittingHome} guarantees that property (3) holds for all 
$n$ and $\mu$ for some positive $\epsilon$ also only depending on $d$ and $p$.
Proposition \ref{prop:GoodProcessesDie} now yields the result.
\end{proof}

We now finally have all of the ingredients to give the 

\begin{proof}[{\bf Proof of Theorem \ref{thm:sub}(ii)}]
Let $\{\tilde{M}^1_t\}_{t\ge 0}$ and $\{\tilde{M}^2_t\}_{t\ge 0}$ denote
two copies of our process $\{\tilde{M}_t\}_{t\ge 0}$, each starting from an 
arbitrary configuration in $\ndtorus\times\{0,1\}^{E(\ndtorus)}$. We will find a coupling
$(\{\tilde{M}^1_t\}_{t\ge 0},\{\tilde{M}^2_t\}_{t\ge 0},T)$ 
of our two processes and a nonnegative random variable $T$ so that \\
(1) $\tilde{M}^1_t=\tilde{M}^2_t$ for all $t\ge T$ and \\
(2) $\E{T}\le O_{d,p}(1)\frac{n^2}{\mu}$.

From here, it is standard that this gives a bound on the mixing time as follows. If $t\ge 4\E{T}$, then
\[
\tv{\calL(M^1_t)}{\calL(M^2_t)} 
\le \Pruu{M^1_t\neq M^2_t}\le \Pruu{\tilde{M}^1_t\neq\tilde{M}^2_t}\le 
\Pruu{T>t}\le \frac{1}{4}
\]
by Markov's inequality. As outlined earlier, we do this coupling in two separate stages.

Stage 1. \\
In this first stage we run the two processes independently
until both processes simultaneously hit $\regeneration$ at some time $T_1$ of the form 
$\frac{kC_{\ref{prop:DecreaseOfAs}.1}}{\mu}$, $k\in \mathbb{N}$.
By Proposition \ref{prop:CouplingFirstPart},
this first stage will take in expectation at most $O_{d,p}(1)\frac{\log n}{\mu}$ time.
Let $\mathcal{F}^{1,\fresh}_{T_1}$ be the $\sigma$-algebra generated by 
$\{\tilde{M}^1_t\}_{0\le t\le T_1}$ including all the refresh times in $[0, T_1]$
but where one does not distinguish $1^\fresh$ and $0^\fresh$.
Let $\mathcal{F}^{2,\fresh}_{T_1}$ be defined analogously.
(A trivial variant of) Lemma \ref{lem:GoodDistributionsPreserved} and the independence 
of the two processes imply that, conditioned on (1) $T_1$, (2) $\mathcal{F}^{1,\fresh}_{T_1}$ 
and (3) $\mathcal{F}^{2,\fresh}_{T_1}$,
we have that these two conditional distributions at time $T_1$ are good. 
Also, the two conditional distributions of the walker are trivially degenerate.
In view of Remark \ref{rem:DescripGoodMeasures}, these two conditional distributions
will then agree up to a translation $\sigma$ which translates one walker to the other. 
Moreover, the two processes at time $T_1$ are conditionally independent given 
(1), (2) and (3) above.

Stage 2. \\
We now condition on (1), (2) and (3) above. Viewing things from time $T_1$,
we are then in the setting of Lemma \ref{lem:RegerationTimesIndep} where 
we can view our two processes as being independent and having
good distributions supported on $\regeneration$ with the walker having a degenerate
distribution.

Before completing the second stage, we first recall for the reader how one couples two 
lazy simple random walks on $\dntorus$. By definition, a lazy simple random walk on 
$\dntorus$ with probability $1/2$ 
stays where it is and with probability $1/2$ moves to a neighbor chosen uniformly at random.
The usual coupling of two lazy simple random walks on $\dntorus$ is a 
Markov process on $\dntorus\times\dntorus$
defined as follows. One chooses one of the $d$ coordinates uniformly at random. 
If the two walkers agree in this coordinate, then they both stay still, 
both move ``right'' in this coordinate or both move ``left'' in this coordinate 
with respective probabilities $1/2$, $1/4$ and $1/4$.
If the two walkers disagree in this coordinate, then one of the walkers, chosen at random,
stays still while the other one moves ``right'' or ``left'' in this coordinate, each with
probability $1/2$. It is easy to check that this is a coupling of the two lazy simple random walks.
It is known (see Section 5.3 in \cite{LPW}) that the expected time until the two walkers meet is at most $O_{d}(1)n^2$.

Denote the joint distribution of $(\tau_{1},U_1)$ by $\nu_{d,p,n,\mu}$; this is a probability measure
on $\mathbb{N}\times\dntorus$. It is easy to show, along the same lines as the proof of 
Lemma \ref{lem:FixedChanceOfHittingHome}, that for any $d$ and $p$, there is a $\gamma=\gamma(d,p)>0$ 
so that for any $n$ and $\mu$,
\begin{equation}\label{eq:SRWcomponent}
\nu_{d,p,n,\mu}=\gamma (\delta_1\times\nu_{\rm LSRW})+(1-\gamma) m_{d,p,n,\mu}
\end{equation}
where $\delta_1\times\nu_{\rm LSRW}$ is the probability measure on $\mathbb{N}\times\dntorus$ where 
the first coordinate is always 1 and the second coordinate is a step of a lazy simple random walk on
$\dntorus$ and where $m_{d,p,n,\mu}$ is some probability measure on $\mathbb{N}\times\dntorus$.

We will first couple the random sequence $\{(\tau_{j}-\tau_{j-1},U_j)\}_{j\ge 1}$ for the two systems and
denote the coupled variables by 
$\{\left((\tau^1_{j}-\tau^1_{j-1},U^1_j),(\tau^2_{j}-\tau^2_{j-1},U^2_j)\right)\}_{j\ge 1}$.
We will do this in such a way that $\tau^1_j-\tau^1_{j-1}=\tau^2_j-\tau^2_{j-1}$ for all $j$.
Given $\{\left((\tau^1_{j}-\tau^1_{j-1},U^1_j),(\tau^2_{j}-\tau^2_{j-1},U^2_j)\right)\}_{1\le j\le K}$
so that $\tau^1_j-\tau^1_{j-1}=\tau^2_j-\tau^2_{j-1}$ for all $1\le j\le K$, we 
know where the two walkers are located at time 
\[
\frac{C_{\ref{prop:DecreaseOfAs}.1}}{\mu}\sum_{j=1}^K \left(\tau^1_j-\tau^1_{j-1}\right)  \,\,\, 
(=\frac{C_{\ref{prop:DecreaseOfAs}.1}}{\mu}\tau^1_K
= \frac{C_{\ref{prop:DecreaseOfAs}.1}}{\mu}\tau^2_K
= \frac{C_{\ref{prop:DecreaseOfAs}.1}}{\mu}\sum_{j=1}^K \left(\tau^2_j-\tau^2_{j-1}\right)).
\]
We now define 
$\left((\tau^1_{K+1}-\tau^1_{K},U^1_{K+1}),(\tau^2_{K+1}-\tau^2_{K},U^2_{K+1})\right)$ as follows.
With probability $1-\gamma$, one chooses an element from 
$\mathbb{N}\times\dntorus$ according to distribution $m_{d,p,n,\mu}$ and uses it for 
both systems. 
With probability $\gamma$, one takes the first coordinate to be 1 for both systems and
does the coupled lazy simple random walk (described above) for the second coordinates of the two systems. 
(This coupling is reminiscent of a coupling due to D. Ornstein.) Note that this is a coupling and with it,
we have that $\tau^1_j-\tau^1_{j-1}=\tau^2_j-\tau^2_{j-1}$ for all $j$ and 
$(\tau^1_j-\tau^1_{j-1},U^1_j)=(\tau^2_j-\tau^2_{j-1},U^2_j)$ for all large $j$ depending on $\omega$.

By (\ref{eq:SRWcomponent}), the fact that $\gamma$ is uniformly bounded away from 0,
the fact that the standard coupling of two lazy simple random walks 
on $\dntorus$ couples in expected time at most $O_{d}(1)n^2$ and Wald's Theorem,
we have that the expected number of steps, denoted by $N$, in the above coupling until
the two walkers meet is at most $O_{d,p}(1)n^2$. 
Using Theorem \ref{thm:RenewalsHaveMeanOne}, another application of Wald's Theorem
tells us that the expected value of
\[
T_2:= \frac{C_{\ref{prop:DecreaseOfAs}.1}}{\mu}\sum_{j=1}^N \left(\tau^1_j -\tau^1_{j-1}\right)
\]
is at most $O_{d,p}(1)\frac{n^2}{\mu}$. 
We now let
\[
T:= T_1+ T_2
\]
and note that we have $\E{T}\le O_{d,p}(1)\frac{n^2}{\mu}$. 

Stage 3. \\
By construction, we have that $\{T_1,\mathcal{F}^{1,\fresh}_{T_1}\}$ is
independent of the random variables $\{(\tau^1_{j}-\tau^1_{j-1},U^1_j)\}_{j\ge 1}$ from the second stage
and similarly, we have that $\{T_1,\mathcal{F}^{2,\fresh}_{T_1}\}$ is
independent of the random variables $\{(\tau^2_{j}-\tau^2_{j-1},U^2_j)\}_{j\ge 1}$ from the second stage.
It also follows by construction that the conditional distribution of
$\{\tilde{M}^1_t\}_{t\ge T}$ conditioned on 
$\{T_1,\mathcal{F}^{1,\fresh}_{T_1},\{(\tau^1_{j}-\tau^1_{j-1},U^1_j)\}_{j\ge 1},T\}$
is the same as the conditional distribution of $\{\tilde{M}^2_t\}_{t\ge 0}$ conditioned on 
$\{T_1,\mathcal{F}^{2,\fresh}_{T_1},\{(\tau^2_{j}-\tau^2_{j-1},U^1_j)\}_{j\ge 1},T\}$.
This implies that we have a coupling of the desired form and completes the proof.
\end{proof}

\section{Hitting time results} \label{sec:hitting}

This section is devoted to proving Theorem \ref{thm:hitting}. We will use a number of the results which
were derived in Section \ref{sec:UpperBoundSub}.
We first need some lemmas which might be of independent interest.

Our first lemma follows easily from the usual local central limit theorem for lazy simple random walk on 
$\mathbb{Z}^d$ and therefore no proof is given.
\begin{lemma}\label{lem:ReplacementOfLCLT}
Fix $d\ge 1$ and let $P_{n}^k$ be the $k$-step transition probability for lazy simple random walk on $\dntorus$.
Then there exists a constant $C(d)$ such that for all $n$ and $k$
\begin{equation}\label{eq:LCLTupper}
\sup_{x,y\in \ndtorus}P_n^k(x,y)\le C(d) \left(\frac{1}{k^{d/2}}\vee \frac{1}{n^d}\right)
\end{equation}
and in addition for any $\alpha>0$, there is a constant $C(d,\alpha)$ such that for all $n$
\begin{equation}\label{eq:LCLTlower}
\inf_{x,y\in \ndtorus,k\ge \alpha n^2} P_n^k(x,y)\ge \frac{1}{C(d,\alpha)\,n^d}
\end{equation}
and such that for all $k$
\begin{equation}\label{eq:LCLTlowerCloseGuys}
\inf_{n,x,y\in \ndtorus,\alpha (\dist(x,y))^2\le k} P_n^k(x,y)\ge \frac{1}{C(d,\alpha)\,k^{d/2}}.
\end{equation}
\end{lemma}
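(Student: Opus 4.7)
The plan is to lift lazy simple random walk on $\dntorus$ to lazy simple random walk on $\bfZ$; if $\hat x,\hat y\in\bfZ$ are any lifts of $x,y\in\dntorus$ and $Q^k$ denotes the $k$-step kernel on $\bfZ$, then
\[
P^k_n(x,y) \;=\; \sum_{z\in n\bfZ} Q^k(\hat x,\,\hat y+z).
\]
Since lazy simple random walk on $\bfZ$ is aperiodic with bounded increments, the classical local central limit theorem supplies a constant $c=c(d)>0$ such that, for all $u,v\in\bfZ$ and $k\ge 1$,
\[
Q^k(u,v)\;\le\; \frac{c}{k^{d/2}}\exp\!\left(-\frac{|u-v|^2}{c\,k}\right),\qquad Q^k(u,v)\;\ge\; \frac{1}{c\,k^{d/2}}\exp\!\left(-\frac{c\,|u-v|^2}{k}\right).
\]
Once this is granted, all three bounds reduce to counting how many translates of $\hat y$ by $n\bfZ$ lie in a Euclidean ball of radius $\sqrt{k}$ around $\hat x$.

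For (\ref{eq:LCLTupper}) I would fix the lift $\hat y$ nearest to $\hat x$ and plug the upper Gaussian bound into the sum representation. When $k\le n^2$, only $O(1)$ translates lie within distance $\sqrt{k}$ of $\hat x$; they contribute at most $C/k^{d/2}$, and the Gaussian tails of the remaining translates are summable since $|z|^2/k\ge |z|^2/n^2$ for $z\in n\bfZ$. When $k\ge n^2$, the translates inside a ball of radius $\sqrt{k}$ number $O(k^{d/2}/n^d)$ and each contributes $O(1/k^{d/2})$, giving total $O(1/n^d)$, with the outer tail again negligible.

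For (\ref{eq:LCLTlowerCloseGuys}) I would choose lifts so that $|\hat x-\hat y|=\dist(x,y)$; the hypothesis $\alpha\dist(x,y)^2\le k$ then forces $|\hat x-\hat y|^2/k\le 1/\alpha$, so the $z=0$ term alone in the sum representation is at least $c(d,\alpha)/k^{d/2}$ by the lower Gaussian bound, and all remaining terms are nonnegative. For (\ref{eq:LCLTlower}) I fix $k\ge\alpha n^2$, so that $\sqrt{k/\alpha}\ge n$; a volume comparison shows that at least $c(d,\alpha)\,k^{d/2}/n^d$ translates lie within distance $\sqrt{k/\alpha}$ of $\hat x$, and by the lower Gaussian bound each contributes at least $c'(d,\alpha)/k^{d/2}$, giving a total of at least $1/(C(d,\alpha)\,n^d)$.

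The one substantive input is the LCLT itself; the remaining work is bookkeeping to maintain uniformity of the constants in $n$ (and in $k$), which is automatic from the uniform Gaussian form of the LCLT. Aperiodicity of lazy simple random walk removes any parity issue, so no serious obstacle is expected.
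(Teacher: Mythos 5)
The paper offers no proof of this lemma (it simply asserts that the result ``follows easily from the usual local central limit theorem for lazy simple random walk on $\mathbb{Z}^d$''), and your unfolding argument --- writing $P_n^k(x,y)=\sum_{z\in n\bfZ}Q^k(\hat x,\hat y+z)$ and feeding two-sided Gaussian estimates for $Q^k$ into the sum --- is the standard and correct way to flesh that assertion out. Your counting of lattice translates in the three regimes is also right: $O(1)$ translates within distance $\sqrt k$ of $\hat x$ when $k\le n^2$ and $O(k^{d/2}/n^d)$ translates when $k\ge n^2$ for (\ref{eq:LCLTupper}), the single $z=0$ term for (\ref{eq:LCLTlowerCloseGuys}), and $\gtrsim k^{d/2}/n^d$ contributing terms for (\ref{eq:LCLTlower}).

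The one inaccuracy is the lower Gaussian bound, which you assert ``for all $u,v\in\bfZ$ and $k\ge 1$''. It cannot hold in that generality: each step of the lazy walk moves $\ell^1$-distance at most $1$, so $Q^k(u,v)=0$ whenever $u$ and $v$ are at distance greater than $k$, while the asserted right-hand side is positive. The LCLT lower bound is valid only in a bulk regime $|u-v|\le\epsilon k$ for some $\epsilon=\epsilon(d)>0$. In your applications this costs little: for (\ref{eq:LCLTlowerCloseGuys}) the hypothesis gives $|\hat x-\hat y|\le\sqrt{k/\alpha}$, which lies in the bulk once $k\ge k_0(d,\alpha)$, and for (\ref{eq:LCLTlower}) the relevant translates satisfy $|\hat x-\hat y-z|\le\sqrt{k/\alpha}$ with $k\ge\alpha n^2$, again in the bulk once $n\ge n_0(d,\alpha)$. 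The remaining bounded cases require a separate elementary argument (walk directly to $y$ and then stay lazy), and that argument needs $\dist(x,y)\le k$ --- which exposes the fact that for small $\alpha$ the stated inequalities can genuinely fail in the excluded range: e.g.\ $P_n^k(x,y)=0$ is possible with $\alpha\,\dist(x,y)^2\le k<\dist(x,y)$, so (\ref{eq:LCLTlowerCloseGuys}) is false for such $k$, and similarly (\ref{eq:LCLTlower}) can fail for $n<n_0(d,\alpha)$. You should therefore state the lower Gaussian bound only for $|u-v|\le\epsilon k$ and add the (harmless, and implicitly used by the paper) restrictions $k\ge k_0(d,\alpha)$ in (\ref{eq:LCLTlowerCloseGuys}) and $n\ge n_0(d,\alpha)$ in (\ref{eq:LCLTlower}), handling any remaining bounded cases by the direct-path argument where $\dist(x,y)\le k$ makes it available.
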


\begin{lemma}\label{lem:HittingTimesWithComponent}
Fix $\gamma>0$ and $d\ge 1$. Then there exists a constant 
$C_{\ref{lem:HittingTimesWithComponent}}=C_{\ref{lem:HittingTimesWithComponent}}(\gamma,d)$
so that if $S^{(n)}$ 
is any discrete time random walk on $\dntorus$ whose step distribution $\nu_n$ satisfies
\[
\nu_{n}=\gamma \nu_{\rm LSRW}+(1-\gamma) m_n
\]
where $\nu_{\rm LSRW}$ is the distribution of a step of a lazy simple random walk on $\dntorus$ and 
where $m_{n}$ is some probability measure on $\dntorus$, then the following hold.\\
(i). If $d=1$, then for all $n$,
\[
\max\{\E{\sigma_y}:y\in\onentorus\} \le C_{\ref{lem:HittingTimesWithComponent}}n^2.
\]
(ii). If $d=2$, then for all $n$,
\[
\max\{\E{\sigma_y}:y\in\twontorus\} \le C_{\ref{lem:HittingTimesWithComponent}}n^2\log n.
\]
(iii). If $d\ge 3$, then for all $n$,
\[
\max\{\E{\sigma_y}:y\in\dntorus\} \le C_{\ref{lem:HittingTimesWithComponent}}n^d.
\]
\end{lemma}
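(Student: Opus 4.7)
The plan is to reduce the problem to the known hitting-time behaviour of lazy simple random walk (LSRW) on $\dntorus$ by first deriving local-CLT style transition probability estimates for $S^{(n)}$ and then applying a standard Green's function / first-moment argument. Write $P^k(x,y)$ for the $k$-step transition kernel of $S^{(n)}$.

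For the transition probability estimates, let $N_k$ denote the number of LSRW steps among the first $k$ steps of $S^{(n)}$, so that $N_k \sim \mathrm{Binom}(k, \gamma)$. Since $\dntorus$ is an abelian group, only the total number of LSRW and $m_n$ steps matters for the final position, and I may decompose
\[
P^k(x,y) = \E{(P_n^{N_k} * m_n^{*(k-N_k)})(y-x)}.
\]
The crucial observation is that convolution with a probability measure neither raises the pointwise supremum nor lowers the pointwise infimum of the other kernel. Combining this with the bounds of Lemma~\ref{lem:ReplacementOfLCLT} applied to $P_n^{N_k}$, plus a Chernoff concentration of $N_k$ around $\gamma k$, I expect to obtain constants depending only on $d$ and $\gamma$ for which
\[
\sup_{x,y} P^k(x,y) \le C\Bigl(\tfrac{1}{k^{d/2}} \vee \tfrac{1}{n^d}\Bigr) \quad \text{for } k \ge C_0/\gamma,
\]
\[
\inf_{x,y} P^k(x,y) \ge \frac{c}{n^d} \quad \text{for } k \ge C_1 n^2.
\]

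Given these estimates, fix $y$ and put $N_y(T) = \sum_{k=0}^{T-1} \ind{S_k^{(n)}=y}$. The strong Markov property at $\sigma_y$ yields the classical inequality
\[
\pr{\sigma_y < T} \ge \frac{\E{N_y(T)}}{\max_z \Espace{N_y(T)}{z}},
\]
and summing $\sum_{k=0}^{T-1} P^k(x,y)$ using the estimates above gives $\E{N_y(T)} \ge c(T - C_1 n^2)/n^d$ for $T \ge 2 C_1 n^2$, with matching upper bound of order $T/n$ for $d=1$, of order $\log n + T/n^2$ for $d=2$, and of order $1 + T/n^d$ for $d \ge 3$. Choosing $T$ a sufficiently large multiple of $n^2$, $n^2\log n$, or $n^d$ respectively makes this ratio at least some constant $\delta = \delta(\gamma,d) > 0$, uniformly in $x$, $y$, and in $n$ (with finitely many small $n$ absorbed into the final constant). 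Iterating the strong Markov property at multiples of $T$, $\sigma_y$ is stochastically dominated by $T$ times a $\mathrm{Geom}(\delta)$ random variable, giving $\Espace{\sigma_y}{x} \le T/\delta$ and hence the claimed three bounds.

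The main obstacle is the upper-bound half of the transition probability estimate: for small values of $N_k$, Lemma~\ref{lem:ReplacementOfLCLT} provides no useful bound on $P_n^{N_k}$ (e.g.\ $N_k = 0$ leaves $m_n^{*k}$, which can be as large as $1$). This is handled by the fact that $\pr{N_k \le \gamma k/2}$ decays exponentially in $\gamma k$, so the contribution from small $N_k$ to $P^k(x,y)$ is negligible once $k$ is a sufficiently large multiple of $1/\gamma$. Apart from this bookkeeping, the rest is a routine application of Green's-function techniques familiar for random walks on the torus.
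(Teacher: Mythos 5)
Your proposal is correct and follows essentially the same route as the paper: condition on the binomial number of lazy--simple--random--walk steps, use a Chernoff bound to reduce to the LCLT estimates of Lemma~\ref{lem:ReplacementOfLCLT}, and convert the resulting two-sided bounds on $\sum_k P^k(x,y)$ into a uniform lower bound on $\pr{\sigma_y<T}$ that is then iterated geometrically. The only (cosmetic) difference is in the last step, where you use the first-moment strong Markov inequality $\pr{\sigma_y<T}\ge \E{N_y(T)}/\max_z\Espace{N_y(T)}{z}$ while the paper applies the second-moment (Cauchy--Schwarz) method to the local time; both rest on exactly the same Green's function estimates.
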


\begin{proof}
Letting $P_{S^{(n)}}^k$ be the $k$-step transition probability for $S^{(n)}$, 
we claim that there exists a constant $C(d,\gamma)$ so that for all $n$ and $k$
\begin{equation}\label{eq:LCLTupperAGAIN}
\sup_{x,y\in \ndtorus} P_{S^{(n)}}^k(x,y)\le C(d,\gamma) (\frac{1}{k^{d/2}}\vee \frac{1}{n^d})
\end{equation}
and so that for all $n$
\begin{equation}\label{eq:LCLTlowerAGAIN}
\inf_{x,y\in \ndtorus,k\ge n^2/2} P_{S^{(n)}}^k(x,y)\ge \frac{1}{C(d,\gamma)n^d}.
\end{equation}
We argue (\ref{eq:LCLTupperAGAIN}). 
Let $Y(n,k)$ be the number of times that $S^{(n)}$ uses the lazy simple random walk component up
to time $k$ and observe
that $Y(n,k)$ has a binomial distribution with parameters $k$ and $\gamma$. Therefore,
the probability of $\{Y(n,k)\le k\gamma/2\}$ is exponentially small in $k$. On the other hand,
if $Y(n,k)\ge k\gamma/2$, then
we can bound the probability of being at some $y$ at time $k$
by conditioning on the steps up to time $k$ taken by $S^{(n)}$ when it doesn't use the lazy simple 
random walk component and then using (\ref{eq:LCLTupper}) for the remaining steps. Together, this yields
\[
P_{S^{(n)}}^k(x,y)\le e^{-c(\gamma)k} + C(d) \left(\left(\frac{2}{k\gamma}\right)^{d/2}\vee \frac{1}{n^d}\right)
\]
which is bounded above by $C(d,\gamma) (\frac{1}{k^{d/2}}\vee \frac{1}{n^d})$ for some
constant $C(d,\gamma)$, obtaining (\ref{eq:LCLTupperAGAIN}).
(\ref{eq:LCLTlowerAGAIN}) is handled similarly by using (\ref{eq:LCLTlower}) instead.

Next, given a general Markov chain $X=\{X_n\}$, an element $y$ in the state space and $k\ge 1$, we
let $L_X(k,y)=L(k,y):=\sum_{i=1}^k I_{\{X_i=y\}}$ be the local time at $y$ up to time $k$. 

Case (i): $d=1$. 
By using (\ref{eq:LCLTlowerAGAIN}) and summing over $k\in \{n^2/2,\ldots, n^2\}$ and then
using (\ref{eq:LCLTupperAGAIN}) and summing over $k\in \{1,\ldots, n^2\}$, we obtain
\begin{equation}\label{eq:KeyBoundOneD}
\frac{n}{2C(1,\gamma)}\le \min_y \E{L_{S^{(n)}}(n^2,y)}\le \max_y \E{L_{S^{(n)}}(n^2,y)}\le 2C(1,\gamma)n.
\end{equation}
The  inequalities in (\ref{eq:KeyBoundOneD}) easily yield that for any $y$,
\[
\E{(L_{S^{(n)}}(n^2,y))^2}\le 8 C^2(1,\gamma)n^2\le 32 C^4(1,\gamma)(\E{(L_{S^{(n)}}(n^2,y))})^2.
\]
The Cauchy-Schwarz inequality allows us to conclude that for any $y$,
\[
\Pruu{L_{S^{(n)}}(n^2,y))>0}\ge \frac{1}{32\,C^4(1,\gamma)}.
\]
By symmetry, starting from any state, we hit $y$ after $n^2$ steps with probability at least
$\frac{1}{32\,C^4(1,\gamma)}$. This immediately gives an upper bound of the desired form.

Case (ii): $d=2$. 
Proceed exactly as in the case $d=1$ except we consider the local time up to time
$n^2\log n$ and sum (\ref{eq:LCLTlowerAGAIN}) and (\ref{eq:LCLTupperAGAIN}) 
over $k\in \{n^2/2,\ldots, n^2\log n\}$ and $k\in \{1,\ldots, n^2\log n\}$
respectively. The right and left most terms of what will become
(\ref{eq:KeyBoundOneD}) are then of order $\log n$ in this case.

Case (iii): $d\ge 3$. 
Proceed exactly as in the case $d=1$ except we consider the local time up to time
$n^d$ and sum (\ref{eq:LCLTlowerAGAIN}) and (\ref{eq:LCLTupperAGAIN}) 
over $k\in \{n^d/2,\ldots, n^d\}$ and $k\in \{1,\ldots, n^d\}$
respectively. The right and left most terms of what will become
(\ref{eq:KeyBoundOneD}) are then of order 1 in this case.
\end{proof}

\begin{lemma}\label{lem:NeededLCLTCloseGuys}
Fix $\gamma>0, R>0,$ and $d\ge 1$. Then there exists a constant 
$C_{\ref{lem:NeededLCLTCloseGuys}}=C_{\ref{lem:NeededLCLTCloseGuys}}(\gamma,R,d)$
so that if $S^{(n)}$ is any discrete time random walk on $\dntorus$  whose step distribution $\nu_n$ 
satisfies
\[
\nu_{n}=\gamma \nu_{\rm LSRW}+(1-\gamma) m_n
\]
where $\nu_{\rm LSRW}$ is the distribution of a step of a lazy simple random walk on $\dntorus$ and 
where $m_{n}$ is some probability measure on $\dntorus$
and 
\begin{equation}\label{eq:VarianceBound}
\E{\dist(S^{(n)}_1,0)^2}\le R,
\end{equation}
then it follows that for any $k$,
\begin{equation}\label{eq:LCLTlowerCloseGuysSn}
\inf_{n,x,y\in \ndtorus:\dist(x,y)^2\le k} 
P_{S^{(n)}}^k(x,y)\ge \frac{C_{\ref{lem:NeededLCLTCloseGuys}}}{k^{d/2}}.
\end{equation}
\end{lemma}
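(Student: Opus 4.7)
The plan is to use the mixture structure of $\nu_n$ to decompose $S^{(n)}_k$ into a lazy simple random walk component and an $m_n$ component, then reduce to the LSRW local CLT bound (\ref{eq:LCLTlowerCloseGuys}). Explicitly, I would couple things so that
\[
S^{(n)}_k = x + L + T,
\]
where $L := W_1 + \cdots + W_N$ is a sum of $N$ i.i.d.\ LSRW steps, $T := Z_1 + \cdots + Z_{k-N}$ is a sum of $k-N$ i.i.d.\ samples from $m_n$, and $N := \sum_{i=1}^{k}\xi_i$ with $\xi_i$ i.i.d.\ Bernoulli$(\gamma)$; here $L$ and $T$ are conditionally independent given $N$. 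Conditioning on $(N,T)$ yields
\[
P^k_{S^{(n)}}(x, y) = \E{P_n^{N}(0,\, y - x - T)},
\]
so it suffices to exhibit an event of positive probability on which $N$ is of order $k$ and $\dist(y-x-T,0)^2$ is of order $k$, since (\ref{eq:LCLTlowerCloseGuys}) applied to the LSRW will then give a lower bound of order $1/N^{d/2} = \Theta(1/k^{d/2})$ on the inner transition probability.

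For $k$ larger than some $k_0 = k_0(\gamma)$, Chernoff gives $\Pruu{N \geq k\gamma/2} \geq 3/4$. For $T$, the hypothesis (\ref{eq:VarianceBound}) combined with $\E{\dist(W_1,0)^2} = 1/2$ yields $\E{\dist(Z_1,0)^2} \leq R' := (R-\gamma/2)/(1-\gamma)$. I would then lift each $Z_i$ to a minimal representative $\tilde{Z}_i \in \mathbb{Z}^d$ with $|\tilde{Z}_i| = \dist(Z_i,0)$ and use independence together with the vanishing of $\E{\tilde{Z}_1}$ to bound $\E{|\tilde{T}|^2} \leq k R'$; since $\dist(T,0) \leq |\tilde{T}|$, Markov's inequality yields $\Pruu{\dist(T,0) \leq C_1\sqrt{k}} \geq 3/4$ for a suitable $C_1 = C_1(R',d,\gamma)$. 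On the intersection of these two events (probability at least $1/2$), the triangle inequality gives $\dist(y-x-T,0) \leq (1+C_1)\sqrt{k}$ while $N \geq k\gamma/2$, so (\ref{eq:LCLTlowerCloseGuys}) with $\alpha := \gamma/(2(1+C_1)^2)$ gives $P_n^N(0,\, y-x-T) \geq 1/(C(d,\alpha)N^{d/2}) \geq 1/(C(d,\alpha)k^{d/2})$. For $k \leq k_0$, I would instead use the event that every one of the $k$ steps is a LSRW step, which has probability at least $\gamma^{k_0}$, and invoke (\ref{eq:LCLTlowerCloseGuys}) directly since $\dist(x,y)^2 \leq k$.

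The main obstacle is the second-moment bound $\E{\dist(T,0)^2} = O(k)$: it requires the lifted increments $\tilde{Z}_i$ to have mean zero in $\mathbb{Z}^d$, since otherwise $\tilde{T}$ drifts by $\Omega(k)$ and the probability that $T$ lies within distance $O(\sqrt{k})$ of $y-x$ decays exponentially in $k$ (take $d=1$ and $m_n = \delta_1$ for an explicit counterexample to the conclusion). The lemma therefore implicitly uses some form of symmetry of $m_n$. In the intended application here, $m_n$ is the non-LSRW component of a symmetric step distribution built from the random walk on dynamical percolation, so $m_n$ inherits symmetry from $\nu_n$ and $\nu_{\mathrm{LSRW}}$, and the minimal lifts $\tilde{Z}_i$ are genuinely mean-zero, allowing the above argument to go through.
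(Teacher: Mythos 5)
Your argument is essentially the paper's own (sketched) proof: split the $k$ steps into the lazy--simple--random--walk steps and the $m_n$-steps, use Chernoff to guarantee at least $k\gamma/2$ of the former, use the variance hypothesis to keep the net displacement of the latter within $O(\sqrt{k})$, and conclude with the local CLT lower bound (\ref{eq:LCLTlowerCloseGuys}) for the lazy walk. Your further observation --- that bounding the $m_n$-displacement by $O(\sqrt{k})$ rather than $O(k)$ requires the lifted increments to be mean zero, so that the lemma as literally stated implicitly needs the symmetry that $\nu_n$ enjoys in the application (witness $d=1$, $m_n=\delta_1$) --- is correct and identifies a genuine, though harmless, omission in the paper's statement and sketch.
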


\begin{proof}
We just sketch the proof. This can be proved in a similar fashion as (\ref{eq:LCLTupperAGAIN}) but now 
using (\ref{eq:LCLTlowerCloseGuys}) as follows. By the uniform variance assumption
(\ref{eq:VarianceBound}), if we add up the 
steps taken up to time $k$ when we don't use the lazy simple random walk component, then, with fixed 
probability, this is not much larger than $\sqrt{k}$ (times a constant depending on $R$). 
Since $\dist(x,y)\le \sqrt{k}$, we won't be more than order $\sqrt{k}$ away from $y$. 
Also, with a fixed probability, the number of times we use the lazy simple random walk component up 
to time $k$ is at least $k\gamma/2$. By (\ref{eq:LCLTlowerCloseGuys}), these latter steps will bring us 
to $y$ with probability at least a constant times $\frac{1}{k^{d/2}}$.
\end{proof}

Since we are always starting in $\delta_0\times\pi_p$, we may consider all the edges to have a $\fresh$,
as in Section \ref{sec:UpperBoundSub}. We let $\tau_0=0$ and, for $j\geq 1$, let, as in (\ref{eq:Tauj's}),
$\tau_j := \min\{i > \tau_{j-1} \colon i\in \mathcal{I}\}$ (recall
(\ref{eq:DefinitionDBG}) and (\ref{eq:DefinitionI})). We have that
$\{\tau_j-\tau_{j-1}\}_{j\ge 1}$ are independent and all of these, except $\tau_1$, have the same
distribution as those introduced in (\ref{eq:Tauj's}). $\tau_1$ has a different distribution 
since we are not starting in $\regeneration$. 
As it will be simpler to look at the real time corresponding to $\tau_k$, we let 
$\tilde{\tau}_k:=\frac{C_{\ref{prop:DecreaseOfAs}.1\, \tau_k}}{\mu}$. 
Recall, as introduced in the proof of Proposition \ref{prop:DecreaseOfAs},
that $\mathcal{R}[u,v]$ denotes the range of the random walker during the time interval $[u,v]$.

\begin{lemma}\label{lem:RangeBoundedBetweenReg}
For all $d$ and for all $p \in (0,p_\critical(\mathbb{Z}^{d}))$, there are finite constants
$C_{\ref{lem:RangeBoundedBetweenReg}.1}=C_{\ref{lem:RangeBoundedBetweenReg}.1}(d,p)$
and
$C_{\ref{lem:RangeBoundedBetweenReg}.2}=C_{\ref{lem:RangeBoundedBetweenReg}.2}(d,p)$
so that for all $n$, for all $\mu$ and for all $i\ge 1$,
\[
\E{e^{C_{\ref{lem:RangeBoundedBetweenReg}.1}|\mathcal{R}[\tilde{\tau}_i,\tilde{\tau}_{i+1}]|}} 
\le C_{\ref{lem:RangeBoundedBetweenReg}.2}\, .
\]
In particular, for some constant
$C_{\ref{lem:RangeBoundedBetweenReg}.3}=C_{\ref{lem:RangeBoundedBetweenReg}.3}(d,p)$,
$\E{|\mathcal{R}[\tilde{\tau}_{i},\tilde{\tau}_{i+1}]|}\le C_{\ref{lem:RangeBoundedBetweenReg}.3}$
for all $n$, for all $\mu$ and for all $i\ge 1$.
\end{lemma}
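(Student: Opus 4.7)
My strategy is to apply Lemma~\ref{lem:NacuPeres05} after decomposing $[\tilde\tau_i, \tilde\tau_{i+1}]$ into its $\tau_{i+1}-\tau_i$ discrete steps of length $C/\mu$, where $C := C_{\ref{prop:DecreaseOfAs}.1}$. By Lemma~\ref{lem:RegerationTimesIndep} and the strong Markov property, it suffices to treat $i=1$ with the process started from a good distribution on $\regeneration$. Set $R_k := |\mathcal{R}[(k-1)C/\mu,\, kC/\mu]|$, $M := \tau_1$, and $\mathcal{G}_k := \mathcal{F}^\fresh_{kC/\mu}$. Since $|\mathcal{R}[0, \tilde\tau_1]| \le \sum_{k=1}^M R_k$, Lemma~\ref{lem:NacuPeres05} with $X_k = R_k$ will give the conclusion once I verify (i) $\E{e^{\lambda M}} \le a$ and (ii) $\|\E{e^{\lambda R_k}\mid \mathcal{G}_{k-1}}\|_\infty \le a$ with $\lambda, a$ depending only on $(d,p)$.

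For (i), Theorem~\ref{thm:RenewalsHaveMeanOne} is stated only for the mean, but its proof goes through Proposition~\ref{prop:GoodProcessesDie}, which actually delivers an exponential moment for $\tau_1$. For (ii), I condition on $\mathcal{G}_{k-1}$; Lemma~\ref{lem:GoodDistributionsPreserved} keeps the law good, so I may partition the step's real-time interval into $D := C/\epsilon$ subintervals of length $\epsilon/\mu$ and argue exactly as in the proof of Proposition~\ref{prop:DecreaseOfAs}. Letting $\bar\eta_j$ be the set of edges open sometime during the $j$-th subinterval, each $\bar\eta_j$ is conditionally stochastically dominated by iid Bernoulli$(p')$ percolation, and the walker's range in the step lies in $\{X_{(k-1)C/\mu}\}^{\bar\eta_1,\ldots,\bar\eta_D}$, which is stochastically dominated by the $D$-fold iterate $\{X_{(k-1)C/\mu}\}^{p',D}$ (in the notation preceding Theorem~\ref{thm:Literations}).

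The remaining point, and the main technical obstacle, is to show that a $D$-fold subcritical iterate starting from a single vertex has a uniform exponential moment. Let $\phi(\lambda) := \Espace{e^{\lambda|C(0)|}}{\mathbb{Z}^d, p'}$; by Theorem~\ref{thm:exp.volume} this is finite on some interval $[0,\lambda_0)$, with $\phi(0)=1$ and $\phi'(0) = \E{|C(0)|_{p'}} < \infty$. Because $|F^{p'}| \le \sum_{x \in F} |C(x)|$ with the clusters $C(x)$ independent (they involve only edges in $E\setminus F$), induction on $j$ yields
\[
\E{e^{\lambda|\{x\}^{p',j}|}} \le \phi\bigl(f^{(j-1)}(\lambda)\bigr), \qquad f := \log\phi.
\]
Continuity of $f$ at $0$ (where $f(0)=0$) lets me choose $\lambda = \lambda(d,p)>0$ small enough that $f^{(D-1)}(\lambda) < \lambda_0$; this is where $D$ being a constant depending only on $(d,p)$ matters. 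Plugging everything into Lemma~\ref{lem:NacuPeres05} produces the required $\E{e^{C_{\ref{lem:RangeBoundedBetweenReg}.1}|\mathcal{R}[\tilde\tau_i,\tilde\tau_{i+1}]|}} \le C_{\ref{lem:RangeBoundedBetweenReg}.2}$, and the linear-mean statement then follows by Jensen's inequality.
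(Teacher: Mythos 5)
Your overall skeleton matches the paper's: block the excursion into the $\tau_{i+1}-\tau_i$ intervals of length $C_{\ref{prop:DecreaseOfAs}.1}/\mu$, extract an exponential moment for $\tau_{i+1}-\tau_i$ from the proof of Theorem \ref{thm:RenewalsHaveMeanOne} via Proposition \ref{prop:GoodProcessesDie}, and assemble with Lemma \ref{lem:NacuPeres05}. The gap is in your step (ii). Conditioning on $\mathcal{G}_{k-1}=\mathcal{F}^\fresh_{(k-1)C_{\ref{prop:DecreaseOfAs}.1}/\mu}$ reveals the states (open or closed) of every non-$\fresh$ edge, i.e.\ of all of $B_{k-1}$. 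The domination of $\bar\eta_j$ by i.i.d.\ Bernoulli($p'$) therefore holds only \emph{off} this edge set --- exactly the caveat made in the derivation of (\ref{eq:StochDom}). In Proposition \ref{prop:DecreaseOfAs} that caveat is harmless because the iteration is seeded with the whole vertex set $V(A_s)$, so the edges of $A_s$ (both of whose endpoints lie in $V(A_s)$) are irrelevant to $V(A_s)^{\bar\eta_1,\dots,\bar\eta_L}$. You instead seed it with the single vertex $X_{(k-1)C_{\ref{prop:DecreaseOfAs}.1}/\mu}$, for which the revealed edges are very relevant: the walker may travel along the open edges of $B_{k-1}$, which on the conditioning can form an arbitrarily large open cluster through the walker's position. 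Consequently $\E{e^{\lambda R_k}\mid \mathcal{G}_{k-1}}$ is \emph{not} uniformly bounded; the best this route yields is a bound growing with $|B_{k-1}|$ (compare the $\log|A_s|$ term that survives in (\ref{eq:DecreaseOfAs})), so hypothesis (\ref{eq:LInfinityAssumption}) of Lemma \ref{lem:NacuPeres05} is not verified and the argument does not close.

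The paper avoids this by not conditioning on the running filtration at all. After $\tilde{\tau}_i$ the law is good and supported on $\regeneration$, i.e.\ (Remark \ref{rem:DescripGoodMeasures}) it is $\delta_v\times\pi_p$ conditioned on the $2d$ edges at $v$ being closed, an event of probability $(1-p)^{2d}$. Under $\delta_v\times\pi_p$ the bond process is stationary, so the range over any window of length $\beta/\mu$ lies in a cluster of a subcritical configuration and has uniform exponential tails by Theorem \ref{thm:exp.volume}; dividing by $(1-p)^{2d}$ transfers this to the excursion measure, and each block is a sum of $O_{d,p}(1)$ such windows. You would need to replace your step (ii) by this (or some other) argument that does not condition on $B_{k-1}$. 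A secondary, fixable point: the clusters $C(x)$, $x\in F$, are \emph{not} independent --- they are functions of the same configuration on $E\setminus F$ and may coincide --- so the bound $\E{e^{\lambda |F^{p'}|}}\le \phi(\lambda)^{|F|}$, while true, requires a sequential exploration/stochastic domination argument rather than the independence you invoke.
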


\begin{proof}
We prove this for $i=1$. For $i\ge 2$, the distributions are of course the same while one can modify
the argument for $i=0$.
We say that a family of random variables $\{X_\alpha\}$ has {\em uniform exponential tails}
if there are constants $c_1$ and $c_2$ so that $\E{e^{c_1X_\alpha}}\le c_2$ for all $\alpha$.
The proof of Theorem \ref{thm:RenewalsHaveMeanOne}, where 
Proposition \ref{prop:GoodProcessesDie} is critically invoked, actually shows that the family of random
variables $\tau_2-\tau_1$ as we vary $n$ and $\mu$ has uniform exponential tails.
(One can similarly argue that this is also true for $\tau_1$.) Note next that
\begin{equation}\label{eq:UniformExpTails}
|\mathcal{R}[\tilde{\tau}_1,\tilde{\tau}_2]|\le 
\sum_{k=0}^{\tau_{2}-\tau_1 -1}
\left|\mathcal{R}\left[\frac{\tau_1\, C_{\ref{prop:DecreaseOfAs}.1}}{\mu}+\frac{k\,C_{\ref{prop:DecreaseOfAs}.1}}{\mu},
\frac{\tau_1\, C_{\ref{prop:DecreaseOfAs}.1}}{\mu}+\frac{(k+1)\,C_{\ref{prop:DecreaseOfAs}.1}}{\mu}\right]\right|.
\end{equation}
For $\beta=\beta(d,p)$ sufficiently small, 
we have that for all $n$ and $\mu$, the probability that, 
for $\{\eta_t\}_{t\ge 0}$ started in stationarity, a fixed edge $e$ is open at some point in 
$[0,\beta/\mu]$ is less than $p_\critical(\mathbb{Z}^{d})$. It follows from Theorem \ref{thm:exp.volume}
that started from stationarity, the family of random variables
$|\mathcal{R}[s,s+\frac{\beta}{\mu}]|$ as $n$ and $\mu$ vary has uniform exponential tails. 
Since the distribution of
$|\mathcal{R}[\frac{\tau_1\, C_{\ref{prop:DecreaseOfAs}.1}}{\mu}+s,
\frac{\tau_1\, C_{\ref{prop:DecreaseOfAs}.1}}{\mu}+s+\frac{\beta}{\mu}]|$ is just the distribution of 
$|\mathcal{R}[u,u+\frac{\beta}{\mu}]|$ conditioned on an event whose probability is uniformly bounded away 
from 0 (namely, that the edges next to the walker are closed at the appropriate time),
we have that the family of random variables 
$|\mathcal{R}[\frac{\tau_1\, C_{\ref{prop:DecreaseOfAs}.1}}{\mu}+s,
\frac{\tau_1\, C_{\ref{prop:DecreaseOfAs}.1}}{\mu}+s+\frac{\beta}{\mu}]|$ 
as we vary $n$, $\mu$ and $s$ has uniform exponential tails. Since each of the summands in
(\ref{eq:UniformExpTails}) is a sum of a uniformly bounded number of such random variables, 
the family of summands also has uniform exponential tails. Finally, this together with the fact that the 
variables $\tau_2-\tau_1$ has uniform exponential tails (as $n$ and $\mu$ vary) allows us to invoke 
Lemma \ref{lem:NacuPeres05} to conclude that the family of random variables
$\{|\mathcal{R}[\tilde{\tau}_1,\tilde{\tau}_2]|\}$ has uniform exponential tails
(as $n$ and $\mu$ vary). This is exactly the claim of the lemma.
\end{proof}

\begin{proof}[{\bf Proof of Theorem \ref{thm:hitting}}]
It is easy to show that for any fixed $n$, one can find constants so that the upper and lower bounds hold
for all $\mu$. Hence we need to only consider large $n$ below. We first establish the lower bounds. 
Fix $d$ and $p$ with $p\in (0,p_\critical(\mathbb{Z}^{d}))$. 
As usual, we choose $\beta=\beta(d,p)$ sufficiently small, so that for all $n$ and $\mu$, the probability that, 
for $\{\eta_t\}_{t\ge 0}$ started at stationarity, a fixed edge $e$ is open at some point in 
$[0,\beta/\mu]$ is less than $p_\critical(\mathbb{Z}^{d})$. 
Since $u\times\pi_p$ is stationary, it easily follows that starting from 
$\delta_0\times\pi_p$, the distribution of the cluster of the random walker at any time $t$ has
the same distribution as an ordinary cluster at the origin. It follows, as we saw in the proof of 
Lemma \ref{lem:RangeBoundedBetweenReg}, that
for any $t$, $E{|\mathcal{R}[t,t+\beta/\mu]|}\le C_{\ref{thm:hitting}.1}$ where
$C_{\ref{thm:hitting}.1}$ only depends on $d$ and $p$. 

We now move to the dimension dependent parts of the argument, going in order of increasing difficulty.

Case (iii): $d\ge 3$: lower bound. \\
It follows from the above that for any $N$,
\[
\E{|\mathcal{R}[0,N/\mu]|}\le C_{\ref{thm:hitting}.1}N/\beta.
\]
Letting $N:=\frac{\beta n^d}{3C_{\ref{thm:hitting}.1}}$, we have
\[
\E{|\mathcal{R}[0,\frac{\beta n^d}{3C_{\ref{thm:hitting}.1}\mu}]|}\le n^d/3.
\]
On the other hand, Markov's inequality implies that starting from $\delta_0\times\pi_p$, we have
that for any $n$ and $d$ and any $y\in\ndtorus$,
\[
\Pruu{\sigma_y\ge 2H_d(n)}\le 1/2
\]
from which it follows that
\[
\E{|\mathcal{R}[0,2H_d(n)]|}\ge n^d/2.
\]
We conclude that $H_d(n)\ge\frac{\beta n^d}{6C_{\ref{thm:hitting}.1}\mu}$ as desired.

\medskip\noindent
Case (i): $d=1$: lower bound. \\
Let $y$ have maximum distance from 0 in $\onentorus$. Observe that
\[
\{\sigma_y < \frac{\alpha n^2}{\mu}\} \subseteq E_1\cup E_2
\]
where
\[
E_1:=\left\{\max\left\{\dist(X_{\frac{\beta k}{\mu}},0):1\le k\le \frac{\alpha n^2}{\beta}\right\}\ge \frac{n}{8}\right\} 
\]
and
\[
E_2:=\cup_{k=1}^{\frac{\alpha n^2}{\beta}}
\left\{ |\mathcal{R}[\beta (k-1)/\mu,\beta k/\mu]|\ge n/8\right\}.
\]
By Markov's inequality, 
\[
\Pruu{E_1}\le
\E{\max\left\{\dist^2(X_{\frac{\beta k}{\mu}},0):1\le k\le \frac{\alpha n^2}{\beta}\right\}} \frac{64}{n^2}.
\]
By the proof of Theorem~\ref{thm:subMeanSquaredDisplacement} but using the maximal version of 
Lemma \ref{lem:Ball} mentioned earlier instead, we obtain that the latter term
is at most $O_{d,p}(1)\alpha$. Therefore, if $\alpha=\alpha(d,p)$ is taken sufficiently small, then
$\Pruu{E_1}\le 1/4$. Since $|\mathcal{R}[s,s+\frac{\beta}{\mu}]|$ has uniform exponential tails as $n$ and $\mu$ vary,
it follows that $\Pruu{E_2}\to 0$ as $n\to\infty$. 
We conclude that for large $n$,
$\Pruu{\sigma_y < \frac{\alpha n^2}{\mu}}\le 1/2$ and hence 
$\E{\sigma_y}\ge  \frac{\alpha n^2}{2\mu}$ yielding the desired lower bound.

\medskip\noindent
Case (ii): $d=2$: lower bound. \\
Note that we have that for all $k$,
\begin{equation}\label{eq:TaukLowerBound}
\tilde{\tau}_k\ge \frac{k\, C_{\ref{prop:DecreaseOfAs}.1}}{\mu}.
\end{equation}
By Lemma \ref{lem:RangeBoundedBetweenReg}, there exists $R$ such that the second moments of 
$|\mathcal{R}[\tilde{\tau}_i,\tilde{\tau}_{i+1}]|$ are bounded uniformly (in $n$ and $\mu$) by $R$.
We also let $\gamma$ be such that the distribution of
$X_{\tilde{\tau}_{i+1}}-X_{\tilde{\tau}_i}$ contains, for all $n$ and $\mu$, $\gamma \nu_{\rm LSRW}$.

For simplicity, let 
$C_{\ref{thm:hitting}.2}:=\frac{C_{\ref{lem:NeededLCLTCloseGuys}}(\gamma,R,d)}
{16\,C_{\ref{lem:RangeBoundedBetweenReg}.3}}$.
If there exists $y\in\dntorus$ such that
\begin{equation}\label{eq:AssumptionForSomeY}
\Pruu{\sigma_y\le \tilde{\tau}_{C_{\ref{thm:hitting}.2}n^2\log n}}\le 3/4,
\end{equation}
then, by (\ref{eq:TaukLowerBound}), we have
\[
\Pruu{\sigma_y\le \frac{C_{\ref{prop:DecreaseOfAs}.1}\,C_{\ref{thm:hitting}.2}\,n^2\log n}
{\mu}}\le 3/4.
\]
This would lead to
\[
\E{\sigma_y}\ge \frac{C_{\ref{prop:DecreaseOfAs}.1}\,C_{\ref{thm:hitting}.2}\,n^2\log n}
{4\,\mu}
\]
and we would be done. Hence we may assume that for all $y\in\ndtorus$
\begin{equation}\label{eq:Assumption}
\Pruu{\sigma_y\le \tilde{\tau}_{C_{\ref{thm:hitting}.2}\,n^2\log n}}\ge 3/4.
\end{equation}
We will prove below that under this assumption, for all large $n$, we have that
\begin{equation}\label{eq:NeededBound}
\E{\left|\mathcal{R}[0,\tilde{\tau}_{C_{\ref{thm:hitting}.2}\,n^2\log n}]\right|}\le \frac{n^2}{4}.
\end{equation}
On the other hand, by summing (\ref{eq:Assumption}) over $y$, one immediately obtains
\begin{equation}
\E{\left|\mathcal{R}[0,\tilde{\tau}_{C_{\ref{thm:hitting}.2}n^2\log n}]\right|}\ge \frac{3n^2}{4}.
\end{equation}
This contradiction tells us that in fact 
(\ref{eq:Assumption}) cannot be true for all $y$ and so we would have our desired lower bound on
the hitting time.

We are now left to prove that (\ref{eq:Assumption}) implies (\ref{eq:NeededBound}). 
First we abbreviate the event
$\{\sigma_y\le \tilde{\tau}_{C_{\ref{thm:hitting}.2}\,n^2\log n}\}$ 
by $U_y$. Next, for $i\ge 1$, let $Y_i:=\mathcal{R}[\tilde{\tau}_{i-1},\tilde{\tau}_i]$.
Clearly for all $y\in\dntorus$, we have that
\[
\Pruu{U_y}\le
\frac{
\E{\sum_{i=1}^{2C_{\ref{thm:hitting}.2}\,n^2\log n}
I_{\{y\in Y_i\}}}}
{\E{\sum_{i=1}^{2C_{\ref{thm:hitting}.2}\,n^2\log n}
I_{\{y\in Y_i\}}\mid U_y}}\, .
\]
We will argue further down that for all $y$
\begin{equation}\label{eq:NeededLogBound}
\E{\sum_{i=1}^{2C_{\ref{thm:hitting}.2}\,n^2\log n}
I_{\{y\in Y_i\}}\mid U_y} \ge C_{\ref{lem:NeededLCLTCloseGuys}}(\gamma,R,d) \log n/2.
\end{equation}
Assuming this for the moment, we plug this into the previous inequality, sum over $y\in\ndtorus$, use
Lemma \ref{lem:RangeBoundedBetweenReg} and conclude (\ref{eq:NeededBound}). This leaves us with 
proving (\ref{eq:NeededLogBound}).

Let $G_i:=\{|\mathcal{R}[\tilde{\tau}_{i-1},\tilde{\tau}_i]|\le (\log n)^2\}$ and 
$G=\cap_{i=1}^{n^3}G_i$. Using Lemma \ref{lem:RangeBoundedBetweenReg}, one easily sees that
\begin{equation}\label{eq:GhasProbOne}
\lim_{n\to\infty} \Pruu{G}=1.
\end{equation}
Note that (\ref{eq:GhasProbOne}) and (\ref{eq:Assumption}) imply that
\[
\Pruu{G\mid U_y}=1-o(1).
\]
Let $T:=\min\{i:y\in Y_i\}$. By writing $\Pruu{G\mid U_y}$ as
\[
\sum_{j=1}^{C_{\ref{thm:hitting}.2}\,n^2\log n}
\Pruu{G\mid T=j}\Pruu{T=j\mid U_y},
\]
we see that there must be a set 
$J_n\subseteq \{1,2,\ldots,C_{\ref{thm:hitting}.2}\,n^2\log n\}$ 
so that 
\[
\Pruu{T\in J_n\mid U_y}=1-o(1) \mbox{ and }\inf_{j\in J_n}\Pruu{G\mid T=j}=1-o(1).
\]
These imply that
\[
\E{\sum_{i=1}^{2C_{\ref{thm:hitting}.2}\,n^2\log n}
I_{\{y\in Y_i\}}\mid U_y}=
\sum_j\E{\sum_{i=1}^{2C_{\ref{thm:hitting}.2}\,n^2\log n}
I_{\{y\in Y_i\}}\mid T=j}\Pruu{T=j\mid U_y}\ge
\]
\[
(1-o(1))\min_{j\in J_n}\E{\sum_{i=1}^{2C_{\ref{thm:hitting}.2}\,n^2\log n}
I_{\{y\in Y_i\}}\mid T=j}\ge
\]
\[
(1-o(1))\min_{j\in J_n}\Pruu{G_j\mid T=j}
\E{\sum_{i=1}^{2C_{\ref{thm:hitting}.2}\,n^2\log n}
I_{\{y\in Y_i\}}\mid G_j,T=j}\ge
\]
\[
(1-o(1))\min_{j\in J_n}
\E{\sum_{i=1}^{2C_{\ref{thm:hitting}.2}\,n^2\log n}
I_{\{y\in Y_i\}}\mid G_j,T=j}\ge
\]
\[
(1-o(1))\min\left\{
\sum_{i=j+(\log n)^4}^{j+n}\Pruu{X_{\tilde{\tau}_i}=y\mid G_j,T=j}:
j\in \{1,2,\ldots,C_{\ref{thm:hitting}.2}\,n^2\log n\}\right\}.
\]
Now, recall $R$ was chosen so that the i.i.d.\ increments of
$X_{\tilde{\tau}_j}, X_{\tilde{\tau}_{j+1}},\ldots$ have variances bounded by $R$,
uniformly in $n$ and $\mu$. In addition, these increments are independent of the event $\{G_j,T=j\}$. The occurence of
$G_j$ implies that $\dist(y,X_{\tilde{\tau}_j})\le (\log n)^2$. It then follows from Lemma \ref{lem:NeededLCLTCloseGuys} 
that for $i\in \{j+(\log n)^4,\ldots, j+n\}$, 
$\Pruu{X_{\tilde{\tau}_i}=y\mid G_j,T=j}\ge \frac{C_{\ref{lem:NeededLCLTCloseGuys}}(\gamma,R,d)}{i-j}$.
Summing over our set of $i$ yields (\ref{eq:NeededLogBound}).

\medskip\noindent
We now move to the upper bounds which can be obtained simultaneously for all dimensions with 
an application of Lemma \ref{lem:HittingTimesWithComponent} as follows. 

Letting $U_k:=X_{\tilde{\tau_k}}$ as in (\ref{eq:Tauj's}), we have that
$\{U_k-U_{k-1}\}_{k\ge 1}$ are independent with all, except the first, having the same distribution. 
(Recall $d$ and $n$ are suppressed in the notation.)
We will apply Lemma \ref{lem:HittingTimesWithComponent} to the random walk on $\dntorus$ 
whose steps have distribution $\{U_2-U_{1}\}$. The key hypothesis of this lemma holds as indicated
in (\ref{eq:SRWcomponent}). Letting $\hat{\sigma}_y$ denote the hitting time for our induced 
discrete time system, Lemma \ref{lem:HittingTimesWithComponent} yields that $\E{\hat{\sigma}_y}$ 
is bounded above by $O_{d,p}(1)n^2$, $O_{d,p}(1)n^2\log n$ and $O_{d,p}(1)n^d$ 
respectively in dimensions 1,2 and $\ge 3$. 
(Note that while the constant in Lemma \ref{lem:HittingTimesWithComponent} depends on $\gamma$,
our $\gamma$ here only depends on $d$ and $p$.)

Note next that
\[
\sigma_y\le \sum_{i=1}^{\hat{\sigma}_y} \tilde{\tau}_i-\tilde{\tau}_{i-1}
\]
By Wald's lemma, we obtain
\[
\E{\sigma_y}\le \frac{C_{\ref{prop:DecreaseOfAs}.1}}{\mu}\E{\tau_2-\tau_1}\E{\hat{\sigma}_y}.
\]

We cheated a little since $\tau_1$ has a different distribution than the other $\tau_i-\tau_{i-1}$'s;
however, as mentioned earlier, the proof of Theorem \ref{thm:RenewalsHaveMeanOne} easily yields that
$\E{\tau_1}$ is also bounded by a constant depending only on $d$ and $p$ and so this can be ignored in
the above. Finally, Theorem \ref{thm:RenewalsHaveMeanOne} and our derived bound on 
$\E{\hat{\sigma}_y}$ yields the desired upper bound on $\E{\sigma_y}$.
\end{proof}

\section{Open questions}
\label{sec:question}

In Theorem \ref{thm:CLT}, the diffusion constant $\sigma^2$ depends on $d,p$ and $\mu$.
It is easy to see that $\lim_{\mu \to\infty} \sigma^2(d,p,\mu)=p$ since in the large
$\mu$ limit, this just corresponds to random walk time scaled by $p$. Also
Corollary \ref{cor:subMeanSquaredDisplacement} implies that for $p<p_c(d)$, we have that
$\liminf_{\mu \to 0} \frac{\sigma^2(d,p,\mu)}{\mu}<\infty$. One can also show that the 
corresponding $\limsup$ is positive in this case as well.

1. Show that $\sigma^2(d,p,\mu)$ is increasing in both $p$ and $\mu$. 

2. Show that even $\frac{\sigma^2(d,p,\mu)}{\mu}$ is increasing in $\mu$.

\section*{Acknowledgements}
A major part of this work was carried out when the second and third authors were
visiting (at different times) Microsoft Research at Redmond, WA, and they thank
the Theory Group for its hospitality and for creating a stimulating research environment.
JES also acknowledges the support of the Swedish Research Council and the Knut and Alice Wallenberg
Foundation.


\bibliographystyle{plain}
\bibliography{rw}

\noindent Yuval Peres\\
Microsoft Research\\
One Microsoft Way\\
Redmond, WA\\
98052-6399\\
USA\\
{peres@microsoft.com} \\
{\tt {http://research.microsoft.com/en-us/um/people/peres/}}

\medskip\noindent Alexandre Stauffer\\ 
Department of Mathematical Sciences  \\
University of Bath  \\
Claverton Down, Bath  \\
BA2 7AY, U.K.  \\
{a.stauffer@bath.ac.uk}\\
{\tt {http://people.bath.ac.uk/ados20/}}

\medskip\noindent Jeffrey E. Steif\\
Mathematical Sciences \\
Chalmers University of Technology \\
and \\
Mathematical Sciences \\
G\"{o}teborg University \\
SE-41296 Gothenburg, Sweden \\
{steif@math.chalmers.se} \\
{\tt {http://www.math.chalmers.se/\string~steif/}}

\end{document}